\newcommand{\Z}{\mathbb{Z}}
\newcommand{\C}{\mathbb{C}}
\newcommand{\Q}{\mathbb{Q}}
\newcommand{\R}{\mathbb{R}}
\renewcommand{\H}{\mathbb{H}}
\newcommand{\Id}{\operatorname{Id}}
\newcommand{\End}{\operatorname{End}}
\newcommand{\Vol}{\operatorname{Vol}}
\newcommand{\CS}{\operatorname{CS}}
\newcommand{\Li}{\operatorname{Li}}
\newcommand{\Int}{\operatorname{Int}}
\newcommand{\Tr}{\operatorname{Tr}}
\newcommand{\length}{\operatorname{length}}
\newcommand{\torsion}{\operatorname{torsion}}
\renewcommand{\Re}{\operatorname{Re}}
\renewcommand{\Im}{\operatorname{Im}}
\definecolor{gray1}{rgb}{0.8,0.8,0.8}
\definecolor{gray2}{rgb}{0.95,0.95,0.95}
\newtheorem{theorem}{Theorem}[section]
\newtheorem{lemma}[theorem]{Lemma}
\newtheorem{conjecture}[theorem]{Conjecture}
\theoremstyle{definition}
\newtheorem{definition}[theorem]{Definition}
\newtheorem{example}[theorem]{Example}
\theoremstyle{remark}
\newtheorem{remark}[theorem]{Remark}
\newtheorem*{ack}{Acknowledgments}
\numberwithin{equation}{section}
\begin{document}

\title{An Introduction to the Volume Conjecture}

\author{Hitoshi Murakami}
\address{Department of Mathematics,
Tokyo Institute of Technology,
Oh-okayama, Meguro, Tokyo 152-8551, Japan}
\email{starshea@tky3.3web.ne.jp}

\date{\today}
\begin{abstract}
This is an introduction to the Volume Conjecture and its generalizations for nonexperts.
\par
The Volume Conjecture states that a certain limit of the colored Jones polynomial of a knot would give the volume of its complement.
If we deform the parameter of the colored Jones polynomial we also conjecture that it would also give the volume and the Chern--Simons invariant of a three-manifold obtained by Dehn surgery determined by the parameter.
\par
I start with a definition of the colored Jones polynomial and include elementary examples and short description of elementary hyperbolic geometry.
\end{abstract}
\keywords{volume conjecture, knot, hyperbolic knot, quantum invariant, colored Jones polynomial, Chern--Simons invariant}

\subjclass[2000]{Primary 57M27 57M25 57M50}
\thanks{The author is supported by Grant-in-Aid for Challenging Exploratory Research (21654053)}
\maketitle
\section{Introduction}
In 1995 Kashaev introduced a complex valued link invariant for an integer $N\ge 2$ by using the quantum dilogarithm \cite{Kashaev:MODPLA95} and then he observed that his invariant grows exponentially with growth rate proportional to the volume of the knot complement for several hyperbolic knots \cite{Kashaev:LETMP97}.
He also conjectured that this also holds for any hyperbolic knot, where a knot in the three-sphere is called hyperbolic if its complement possesses a complete hyperbolic structure with finite volume.
\par
In 2001 J.~Murakami and the author proved that Kashaev's invariant turns out to be a special case of the colored Jones polynomial.
More precisely Kashaev's invariant is equal to $J_N\bigl(K;\exp(2\pi\sqrt{-1}/N)\bigr)$, where $J_N(K;q)$ is the $N$-dimensional colored Jones polynomial associated with the $N$-dimensional irreducible representation of the Lie algebra $\mathfrak{sl}(2;\C)$ and $K$ is a knot (\S~\ref{sec:YB}).
We also generalized Kashaev's conjecture to any knot (Volume Conjecture) by using the Gromov norm, which can be regarded as a natural generalization of the hyperbolic volume (\S~\ref{sec:Volume_Conjecture}).
If it is true it would give interesting relations between quantum topology and hyperbolic geometry.
So far the conjecture is proved only for several knots and some links but we have supporting evidence which is described in \S~\ref{sec:VC_proof}.
\par
In the Volume Conjecture we study the colored Jones polynomial at the $N$-th root of unity $\exp(2\pi\sqrt{-1}/N)$.
What happens if we replace $2\pi\sqrt{-1}$ with another complex number?
Recalling that the complete hyperbolic structure of a hyperbolic knot complement can be deformed by using a complex parameter \cite{Thurston:GT3M}, we expect that we can also relate the colored Jones polynomial evaluated at $\exp\bigl((2\pi\sqrt{-1}+u)/N\bigr)$ to the volume of the deformed hyperbolic structure.
At least for the figure-eight knot this is true if $u$ is small \cite{Murakami/Yokota:JREIA2007}.
It is also true (for the figure-eight knot) that we can also get the Chern--Simons invariant, which can be regarded as the imaginary part of the volume, from the colored Jones polynomial (\S~\ref{sec:generalization}).
\par
In general we conjecture that this is also true, that is, for any knot the asymptotic behavior of the colored Jones polynomial would determine the volume of a three-manifold obtained as the deformation associated with the parameter $u$.
\par
The aim of this article is to give an elementary introduction to these conjectures including many examples so that nonexperts can easily understand.
I hope you will join us.
\begin{ack}
The author would like to thank the organizers of the workshop and conference ``Interactions Between Hyperbolic Geometry, Quantum Topology and Number Theory'' held at Columbia University, New York in June 2009.
\par
Thanks are also due to an immigration officer at J.~F.~Kennedy Airport, who knows me by papers, for interesting and exciting discussion about quantum topology.
\end{ack}
\section{Link invariant from a Yang--Baxter operator}
\label{sec:YB}
In this section I describe how we can define a link invariant by using a Yang--Baxter operator.
\subsection{Braid presentation of a link}
An $n$-braid is a collection of $n$ strands that go downwards monotonically from a set of fixed $n$ points to another set of fixed $n$ points as shown in Figure~\ref{fig:braid}.
\begin{figure}[h]
  \includegraphics[scale=0.3]{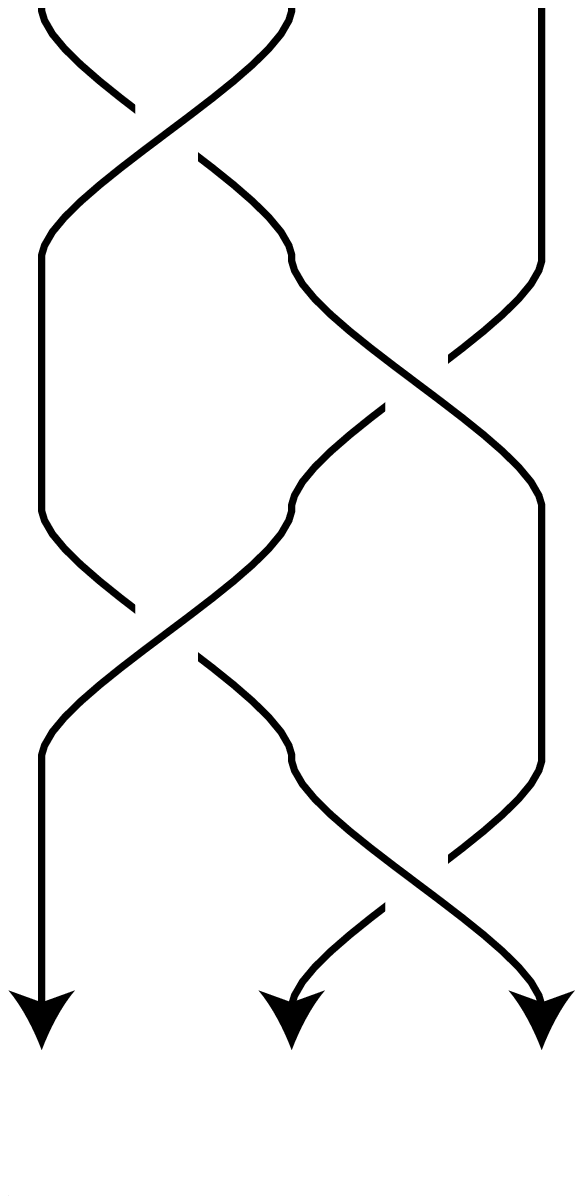}
  \caption{braid}
  \label{fig:braid}
\end{figure}
The set of all $n$-braids makes a group $B_n$ with product of braids $\beta_1$ and $\beta_2$ given by putting $\beta_2$ below $\beta_1$.
It is well known (see for example \cite{Birman:1974}) that $B_n$ is generated by $\sigma_1, \sigma_2,\dots,\sigma_{n-1}$ (Figure~\ref{fig:s_i}) with relations $\sigma_i\sigma_j=\sigma_j\sigma_i$ ($|i-j|>1$) and $\sigma_k\sigma_{k+1}\sigma_k=\sigma_{k+1}\sigma_k\sigma_{k+1}$.
See Figure~\ref{fig:braid_relation} for the latter relation, which is called the braid relation.
\begin{figure}[h]
  \includegraphics[scale=0.3]{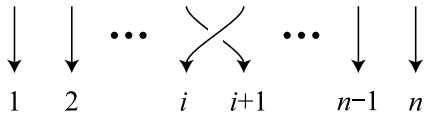}
  \caption{$i$th generator of the $n$-braid group $B_n$}
  \label{fig:s_i}
\end{figure}
\begin{figure}[h]
  \includegraphics[scale=0.3]{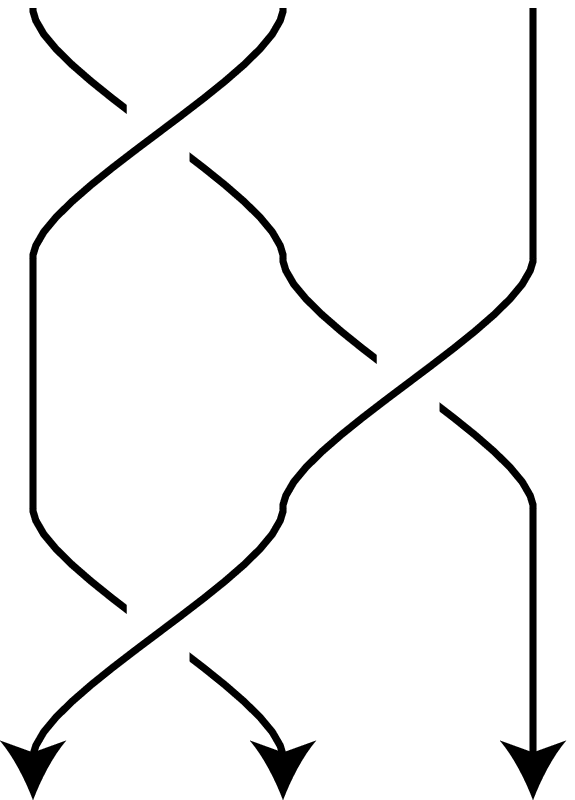}
  \quad\raisebox{12mm}{$=$}\quad
  \includegraphics[scale=0.3]{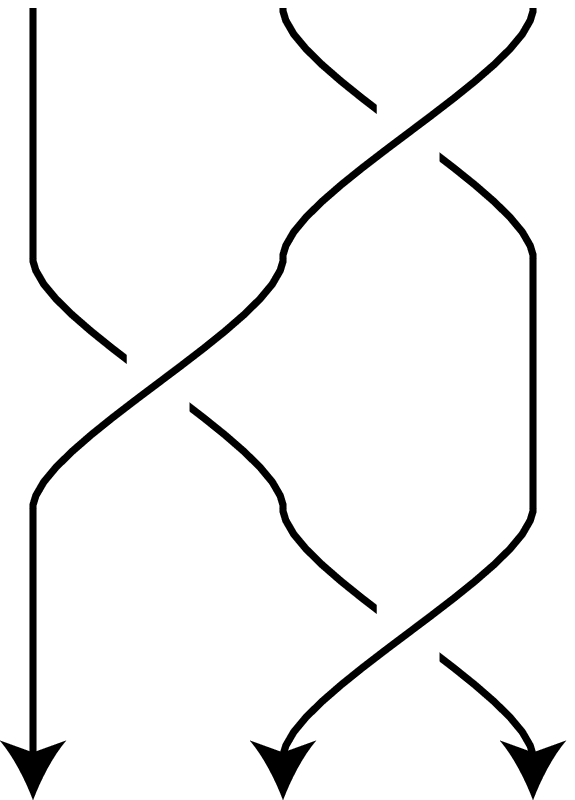}
  \caption{braid relation}
  \label{fig:braid_relation}
\end{figure}
So we have the following group presentation of $B_n$.
\begin{equation}\label{eq:braid_presentation}
  B_n
  =
  \langle
    \sigma_1,\sigma_2,\dots,\sigma_{n-1}
    \mid
    \sigma_i\sigma_j=\sigma_j\sigma_i\,\text{($|i-j|>1$)},
    \sigma_k\sigma_{k+1}\sigma_k=\sigma_{k+1}\sigma_k\sigma_{k+1}
  \rangle
\end{equation}
\par
It is known that any knot or link can be presented as the closure of a braid.
\begin{theorem}[Alexander \cite{Alexander:PRONA1923}]\label{thm:Alexander}
Any knot or link can be presented as the closure of a braid.
\end{theorem}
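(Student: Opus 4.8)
The plan is to prove Alexander's theorem by giving an explicit procedure that converts an arbitrary link diagram into a closed braid.  Fix a regular diagram $D$ of the link $L$ in the plane, and choose a point $O$ in the plane not on $D$; call it the braid axis (we think of $O$ as the projection of an axis perpendicular to the plane).  After a small isotopy we may assume $D$ misses $O$ and that $D$ is oriented.  The diagram is \emph{braided} about $O$ if, when we sweep a ray from $O$ once around, every arc of $D$ is crossed monotonically in the counterclockwise sense; equivalently, every edge of $D$ winds positively around $O$.  A braided diagram is visibly the closure of a braid: cutting along one ray from $O$ unrolls $D$ into $n$ strands descending monotonically.  So it suffices to show that every diagram can be isotoped into braided position.

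First I would analyze what obstructs being braided.  Orient $D$ and subdivide it into edges between crossings; an edge is \emph{good} if it is traversed counterclockwise around $O$ and \emph{bad} otherwise.  The key move is Alexander's trick: take a bad edge $e$, and replace it by two new arcs that go ``the long way'' around $O$ — that is, push $e$ across the point $O$ so that the new arcs each wind positively.  Concretely, one picks a point on $e$, and reroutes a neighborhood of that point in two strands over (or under) all of $D$ so as to encircle $O$ in the good direction.  This replaces one bad edge by arcs with one fewer bad edge, at the cost of introducing some new crossings but strictly decreasing the number of bad edges.  Iterating, after finitely many Alexander moves the diagram has no bad edges, hence is braided, hence $L$ is a closed braid.

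For bookkeeping I would make the induction precise: let $b(D)$ be the number of bad edges.  If $b(D)=0$ we are done.  Otherwise there is a bad edge; in fact one can collect a maximal string of consecutive bad edges into a single arc $\gamma$ that starts and ends going counterclockwise, apply one ``throw'' of $\gamma$ over $O$ (choosing the over/under routing consistently, say always over), and check that the resulting diagram $D'$ satisfies $b(D') = b(D) - 1$ — the thrown arc now winds positively, and no previously good edge is made bad because the reroute happens in a small disk away from the other edges except where it passes near $O$.  Since $b$ is a nonnegative integer strictly decreasing under the move, the process terminates, and the terminal diagram is braided about $O$, exhibiting $L$ as the closure of an element of some $B_n$.

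The main obstacle is the claim that the Alexander move genuinely decreases $b(D)$ without hidden side effects: one must verify that rerouting the bad arc over everything does not secretly create new bad edges among the freshly introduced arcs, and that the over/under choices can be made coherently so that the final object is an honest braid rather than just a monotone tangle.  This requires a careful local picture near $O$ and near the arc being thrown — essentially checking that ``the long way around'' can always be taken as two monotone counterclockwise arcs — and handling the case where several bad edges are adjacent so that a single throw clears them all at once.  The geometry is elementary but the careful accounting of crossings and winding numbers is where the real work lies; everything else is isotopy invariance of the link and the observation that a braided diagram is literally a closed braid.
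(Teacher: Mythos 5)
The paper does not prove this theorem at all; it is quoted with a citation to Alexander's original 1923 paper and used as a black box, so there is no proof of record to compare yours against. Judged on its own, your sketch is the classical Alexander argument (orient the diagram, count edges that wind the wrong way around the axis $O$, and remove them one at a time by throwing them the long way around), and the overall strategy is sound: a braided diagram is visibly a closed braid, the thrown arc contributes only good edges, and the other strands it crosses are merely subdivided without changing their sense around $O$, so the count of bad edges is a valid terminating measure.

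The one genuine gap is your handling of the over/under routing. You propose to collect a maximal string of consecutive bad edges into a single arc $\gamma$ and throw it ``always over'' the rest of the diagram. This fails whenever $\gamma$ contains a crossing at which it is the \emph{under}-strand: rerouting that portion over everything changes the crossing data and hence the link type, so the move is no longer an isotopy. The standard repair is to first subdivide each bad arc so that every piece contains \emph{at most one} crossing of the original diagram; a piece with no crossing may be thrown over (or under) freely, and a piece with one crossing is thrown over or under according to whether it is the over- or under-strand there. With that subdivision each throw removes exactly one bad piece, your induction on $b(D)$ goes through, and the terminal diagram is braided. (A minor bookkeeping point: as written, clearing a maximal string of $k$ bad edges at once would drop $b$ by $k$, not by $1$; this is harmless, but the subdivision above makes the count cleaner anyway.) You correctly flagged the coherence of the over/under choices as the main obstacle, but the specific resolution you offer --- a single uniform ``over'' throw --- is the one choice that does not work in general.
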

\begin{figure}[h]
  \includegraphics[scale=0.3]{braid_small.eps}
  \quad\raisebox{20mm}{$\xrightarrow{\text{closure}}$}\quad
  \includegraphics[scale=0.3]{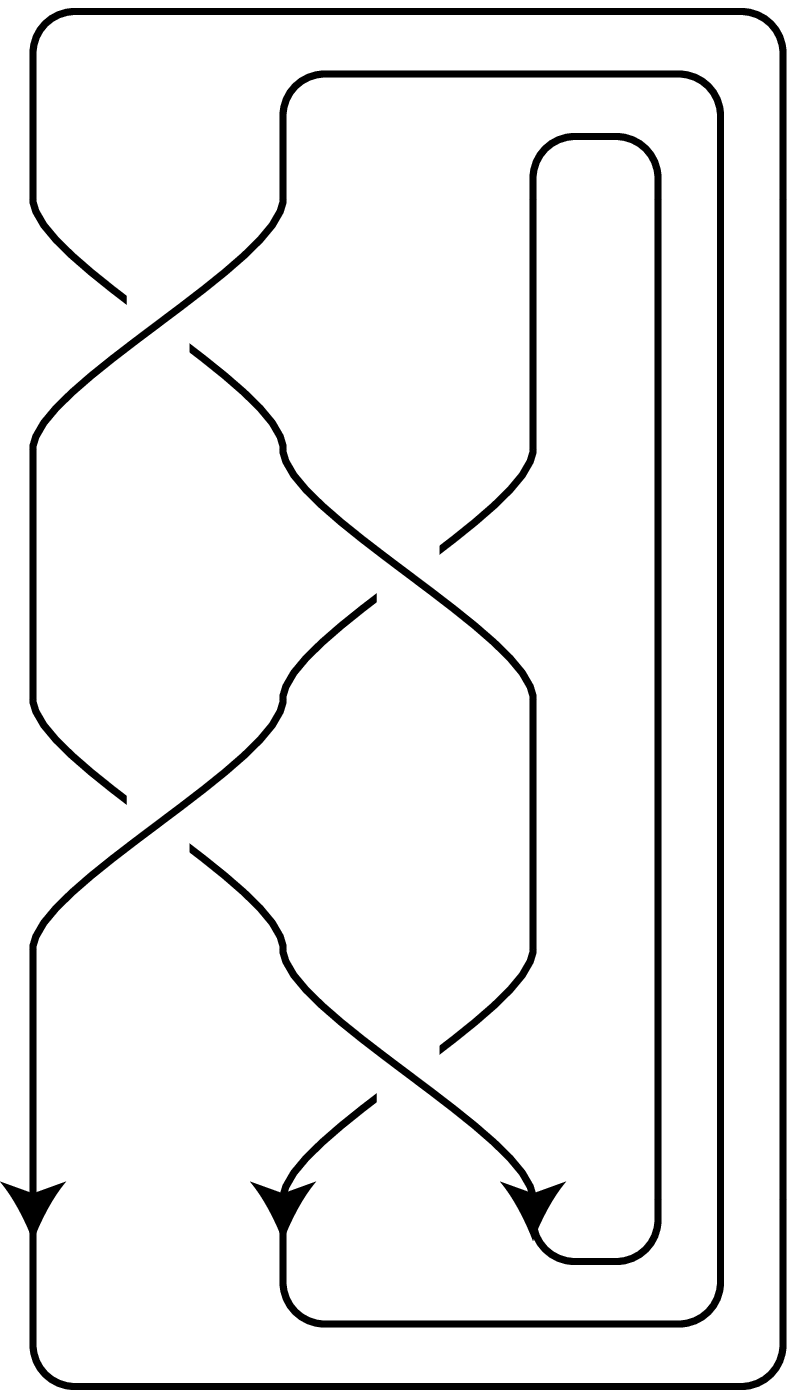}
  \quad\raisebox{20mm}{$=$}\quad
  \raisebox{5mm}{\includegraphics[scale=0.3]{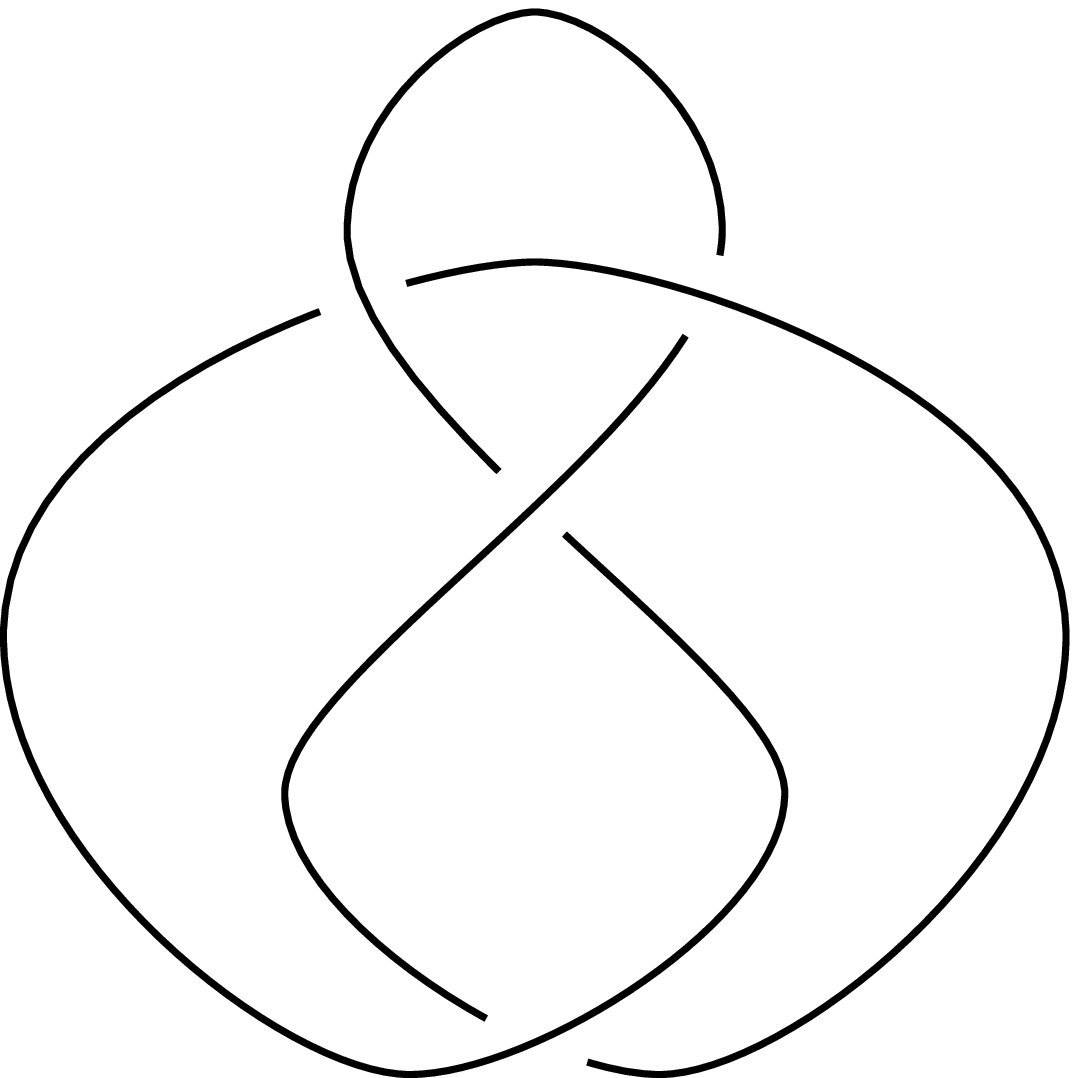}}
  \caption{the figure-eight knot is presented as the closure of a braid}
  \label{fig:braid_presentation}
\end{figure}
Here the closure of an $n$-braid is obtained by connecting the $n$ points on the top with the $n$ points on the bottom without entanglement as shown in the middle picture of Figure~\ref{fig:braid_presentation}.
\par
There are many braids that present a knot or link but if two braids present the same knot or link, they are related by a finite sequence of conjugations and (de-)stabilizations.
In fact we have the following theorem.
\begin{theorem}[Markov \cite{Markov:1936}]\label{thm:Markov}
If two braids $\beta$ and $\beta'$ give equivalent links, then they are related by a finite sequence of conjugations, stabilizations, and destabilizations.
Here a conjugation replaces $\alpha\beta$ with $\beta\alpha$, or equivalently $\beta$ with $\alpha^{-1}\beta\alpha$ {\rm(}Figure~$\ref{fig:conjugate}${\rm)}, a stabilization replaces $\beta\in B_n$ with $\beta\sigma_n^{\pm1}\in B_{n+1}$ {\rm(}Figure~$\ref{fig:stabilization}${\rm)}, a destabilization replaces $\beta\sigma_n^{\pm1}\in B_{n+1}$ with $\beta\in B_n$.
\begin{figure}[h]
  \includegraphics[scale=0.2]{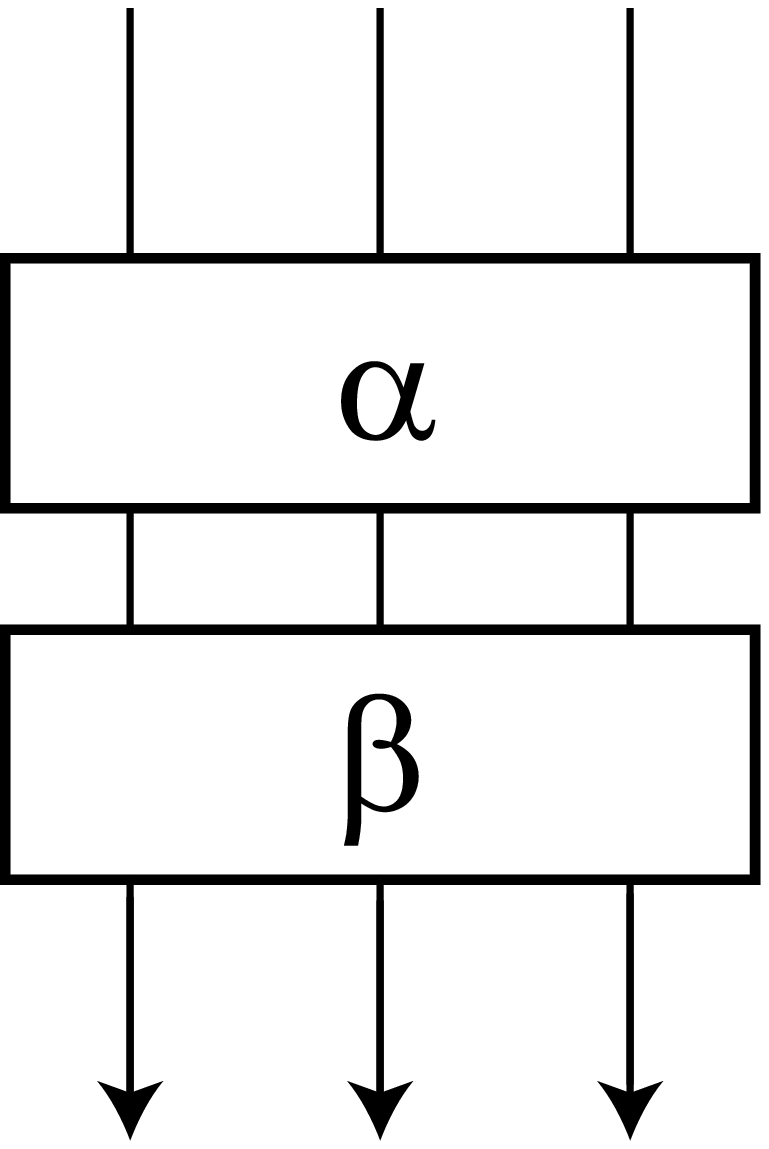}
  \raisebox{13mm}{\quad$\xrightarrow{\text{\rm conjugation}}$\quad}
  \includegraphics[scale=0.2]{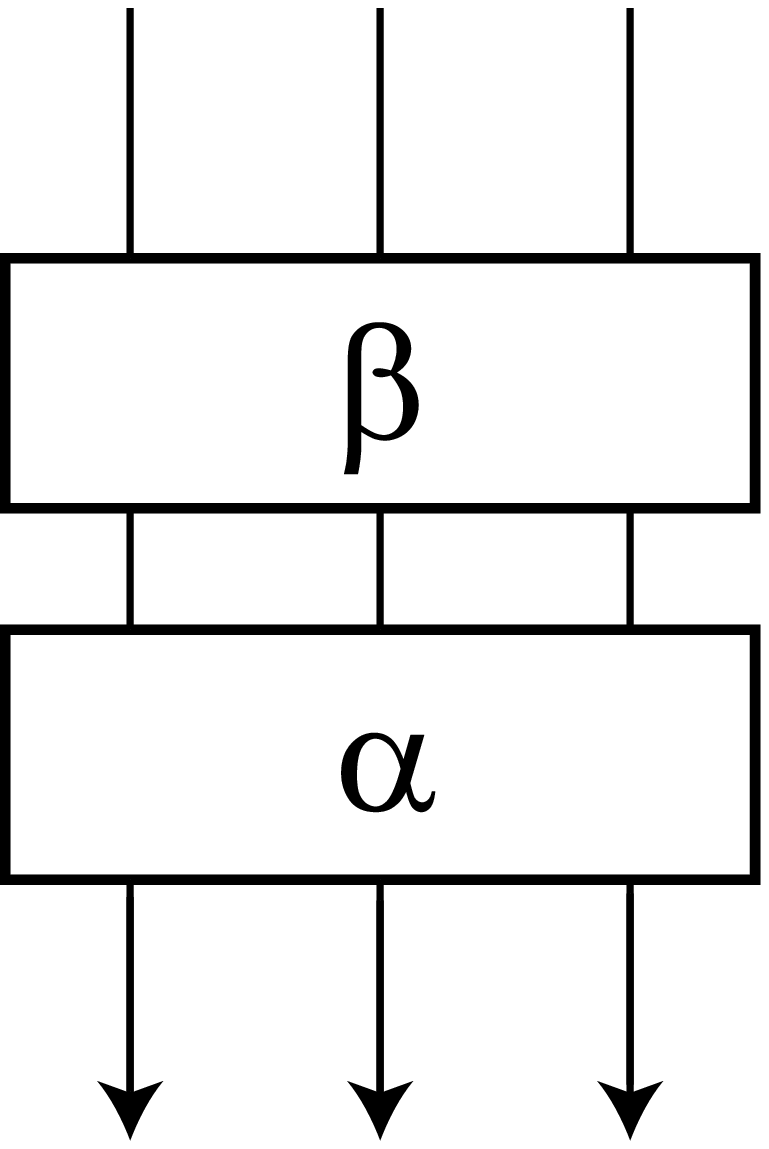}
  \caption{$\alpha\beta$ is conjugate to $\beta$.}
  \label{fig:conjugate}
\end{figure}
\begin{figure}[h]
  \includegraphics[scale=0.2]{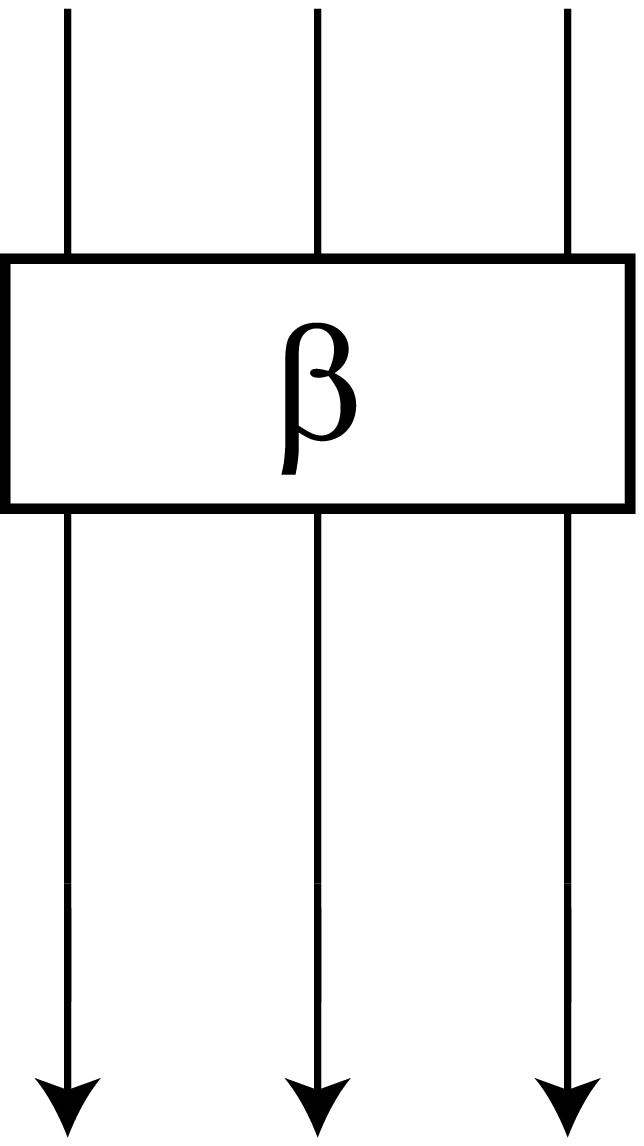}
  \raisebox{13mm}{
    $\begin{array}{c}
      \xrightarrow{\text{\rm stabilization}}\\[2mm]
      \xleftarrow{\text{\rm destabilization}}
     \end{array}$
    }
  \includegraphics[scale=0.2]{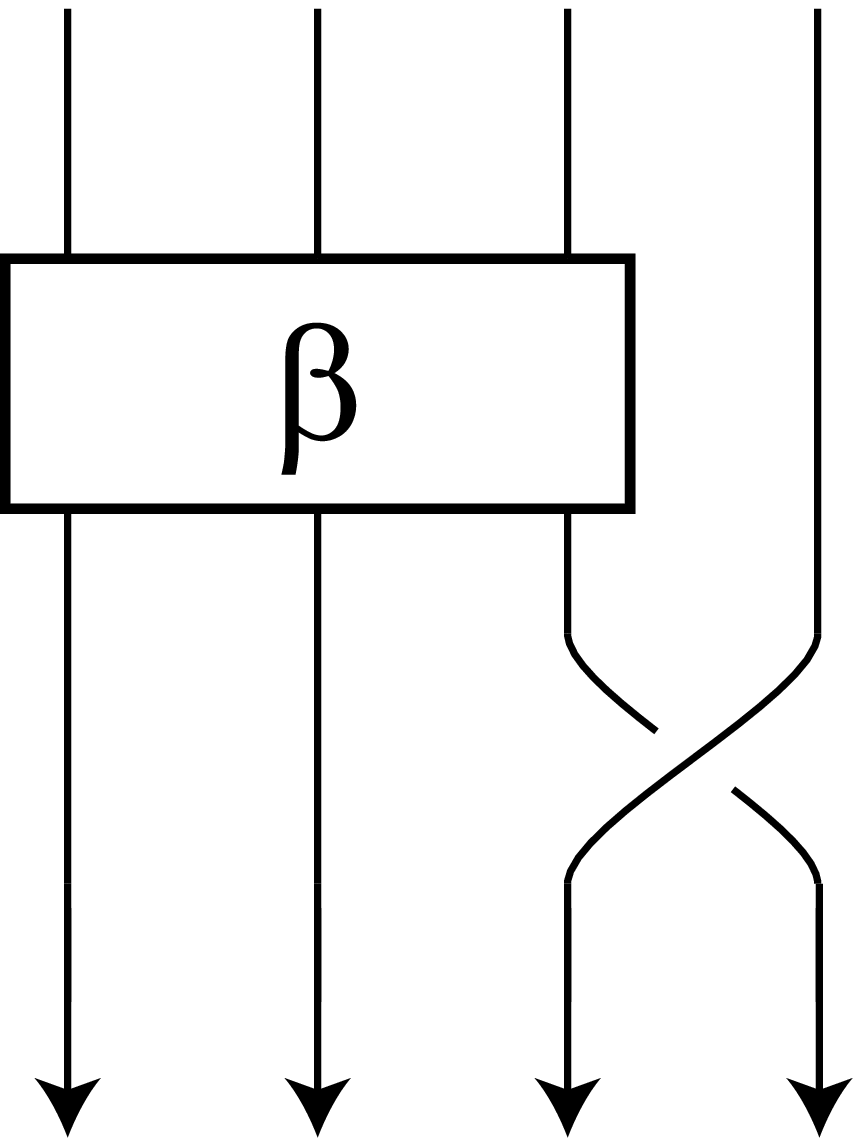}
  \caption{$\beta$ ($\beta\sigma_{n}^{\pm}$, respectively) is stabilized (destabilized, respectively) to $\beta\sigma_{n}^{\pm}$ ($\beta$, respectively).}
  \label{fig:stabilization}
\end{figure}
\end{theorem}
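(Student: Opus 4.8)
The plan is to prove one direction only — that conjugation and (de-)stabilization suffice to connect any two braid words with equivalent closures — since the converse (these moves preserve the link type of the closure) is an easy verification one can see directly from Figures~\ref{fig:conjugate} and~\ref{fig:stabilization}. The natural strategy is to pass through a common ``geometric'' intermediary: two braid closures represent the same link if and only if the corresponding braid axes, viewed as a two-component link $\widehat{\beta}\cup A$ in $S^3$, are related by an isotopy that can be normalized to respect the fibration of the axis complement. So first I would set up the dictionary between braids and \emph{closed braids in a solid torus}, recording that $B_n$ is $\pi_0$ of the space of $n$-point configurations and that conjugation corresponds exactly to a full loop in configuration space (pushing strands around the braid closure).

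Second, given two braids $\beta\in B_n$ and $\beta'\in B_m$ whose closures $\widehat\beta$ and $\widehat{\beta'}$ are isotopic links $L$, I would fix an ambient isotopy $h_t$ of $S^3$ carrying $\widehat\beta$ to $\widehat{\beta'}$ and track what happens to the braid axis $A$. The axis of $\widehat\beta$ is an unknot bounding a disk that $L$ meets transversely in $n$ points; the axis of $\widehat{\beta'}$ likewise. Both are ``braid axes'' for $L$ in the sense of Birman--Menasco: unknots disjoint from $L$ such that $L$ is a closed braid with respect to the open-book whose binding is the axis. The key step is therefore a \emph{Markov theorem for braid axes}: any two braid axes for a fixed link $L$ are isotopic in the complement of $L$ through a sequence of moves, each of which either leaves the induced braid word unchanged up to conjugation, or changes the braiding index by one in a way that corresponds precisely to a stabilization $\beta\mapsto\beta\sigma_n^{\pm1}$ or its inverse.

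Third, I would reduce this statement about axes to a normal-form argument on the Seifert-type surfaces (or on the intersection of $L$ with the pages of the two open books). Put the two braided surfaces in general position; their intersection is a collection of arcs and circles. Remove trivial circles by innermost-disk surgery and remove intersection arcs by ``exchange moves'' and ``destabilizations'' in the Birman--Menasco sense, each of which descends to a conjugation or (de-)stabilization of the braid word. When the two surfaces are disjoint one reads off that the two braids differ by conjugation alone. Concretely, the inductive quantity to decrease is the number of intersection components between the complementary disk of one axis and the braided surface of the other, together with the total crossing number of the braid diagrams.

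The main obstacle is precisely this normal-form reduction: controlling the intersection pattern so that every simplifying move corresponds to one of the three allowed braid moves, with no ``runaway'' introduction of complexity, is the technical heart and is exactly where Markov's original argument and all modern treatments (Birman, Birman--Menasco, Traczyk, Lambropoulou--Rourke) expend their effort. A clean way to organize it is via the \emph{braided surface / band presentation} viewpoint: represent each braid as a ribbon surface built from $n$ disks and bands labelled by the $\sigma_i^{\pm1}$, show that the two ribbon surfaces for isotopic closures are related by band slides, births/deaths of a trivial disk-band pair (these are the stabilizations), and isotopy of the disk cyclic order (these are the conjugations), and then invoke the fact that any two band presentations of the same embedded surface are so related. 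I would present the configuration-space and open-book setup in detail and then, for the normal-form step, either carry out the innermost-disk induction carefully or cite \cite{Birman:1974} and Birman--Menasco for the full combinatorial bookkeeping, as is standard in an introductory account.
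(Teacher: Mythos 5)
The paper does not prove this theorem at all: it is stated as a classical result and attributed to \cite{Markov:1936}, so there is no proof in the text to compare yours against. Evaluated on its own terms, your outline correctly identifies the standard geometric route (braid axes, open books, Birman--Menasco style normal forms), and the easy direction --- that the three moves preserve the closure --- is indeed immediate from the figures. But the writeup has a genuine gap: your ``key step,'' the \emph{Markov theorem for braid axes}, is essentially a restatement of the theorem itself, and the entire mathematical content lives in the normal-form reduction that you only name. Saying that one removes circles by innermost-disk surgery and arcs by exchange moves ``each of which descends to a conjugation or (de-)stabilization'' is precisely the claim that needs proof; without specifying the complexity function, verifying that each simplifying move strictly decreases it, and checking that no move ever requires an intermediate increase that escapes the three allowed operations, the argument is circular. (This is not a pedantic complaint: the failure mode you flag as a ``runaway'' is real, and handling it is why every complete proof --- Markov's, Birman's, Traczyk's, Lambropoulou--Rourke's --- is long.) As an expository reduction-to-the-literature your plan is fine, but then the honest conclusion is ``we cite \cite{Birman:1974},'' which is exactly what the paper does.

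One smaller correction: $B_n$ is the \emph{fundamental group} $\pi_1$ of the configuration space of $n$ unordered points in the disk, not its $\pi_0$; the set of path components of that configuration space is a single point. The identification of conjugation with pushing a subword around the closure is correct, but it comes from the fact that the trace of the closure is only well defined up to cyclic permutation of the braid word, not from a loop in configuration space per se.
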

\subsection{Yang--Baxter operator}
Alexander's theorem (Theorem~\ref{thm:Alexander}) and Markov's theorem (Theorem~\ref{thm:Markov}) can be used to define link invariants.
I will follow Turaev \cite{Turaev:INVEM88} to introduce a link invariant derived from a Yang-Baxter operator.
\par
Let $V$ be an $N$-dimensional vector space over $\C$, $R$ an isomorphism from $V\otimes V$ to itself, $m$ an isomorphism from $V$ to itself, and $a$ and $b$ non-zero complex numbers.
\begin{definition}\label{def:YB}
A quadruple $(R,\mu,a,b)$ is called an enhanced Yang--Baxter operator if it satisfies the following:
\begin{enumerate}
\item $(R\otimes\Id_V)(\Id_V\otimes R)(R\otimes\Id_V)
=(\Id_V\otimes R)(R\otimes\Id_V)(\Id_V\otimes R)$,
\item $R(\mu\otimes\mu)=(\mu\otimes\mu)R$,
\item $\Tr_2\bigl(R^{\pm1}(\Id_V\otimes\mu)\bigr)=a^{\pm1}b\Id_V$.
\end{enumerate}
Here $\Tr_k\colon\End(V^{\otimes k})\to\End(V^{\otimes(k-1)})$ is defined by
\begin{equation*}
  \Tr_k(f)(e_{i_1}\otimes e_{i_2}\dots\otimes e_{i_{k-1}})
  :=
  \sum_{j_1,j_2,\dots,j_{k-1},j=0}^{N-1}
  f_{i_1,i_2,\dots,i_{k-1},j}^{j_1,j_2,\dots,j_{k-1},j}
  (e_{j_1}\otimes e_{j_2}\otimes\dots\otimes e_{j_{k-1}}\otimes e_{j}),
\end{equation*}
where $f\in\End(V^{\otimes k})$ is given by
\begin{equation*}
  f(e_{i_1}\otimes e_{i_2}\otimes\dots\otimes e_{i_k})
  =
  \sum_{j_1,j_2,\dots,j_k=0}^{N-1}
  f_{i_1,i_2,\dots,i_k}^{j_1,j_2,\dots,j_k}
  (e_{j_1}\otimes e_{j_2}\otimes\dots\otimes e_{j_k})
\end{equation*}
and $\{e_0,e_1,\dots,e_{N-1}\}$ is a basis of $V$.
\end{definition}
\begin{remark}
The isomorphism $R$ is often called  an $R$-matrix, and the equation {\rm(1)} is known as the Yang--Baxter equation.
\end{remark}
\par
Given an $n$-braid $\beta$, we can construct a homomorphism $\Phi(\beta)\colon V^{\otimes n}\to V^{\otimes n}$ by replacing a generator $\sigma_i$ with $\Id_V^{\otimes(i-1)}\otimes R\otimes\Id_V^{\otimes(n-i-1)}$, and its inverse $\sigma_i^{-1}$ with $\Id_V^{\otimes(i-1)}\otimes R^{-1}\otimes\Id_V^{\otimes(n-i-1)}$ (Figure~\ref{fig:braid_homomorphism}).
\begin{figure}[h]
  \raisebox{-4mm}{\includegraphics[scale=0.3]{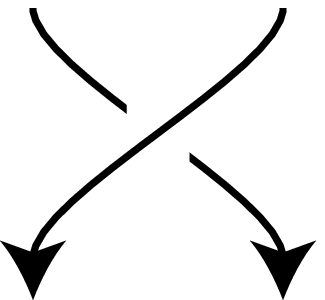}}
  $\Rightarrow$
  \raisebox{-8mm}{\includegraphics[scale=0.3]{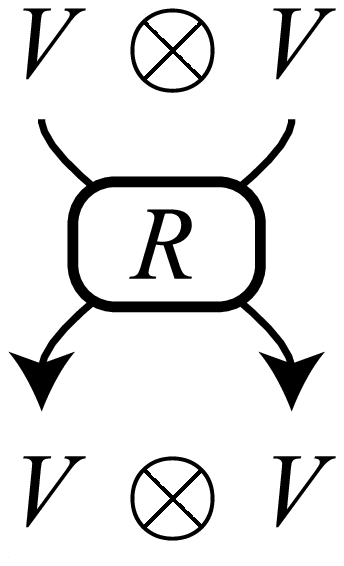}},
  \quad\quad
  \raisebox{-4mm}{\includegraphics[scale=0.3]{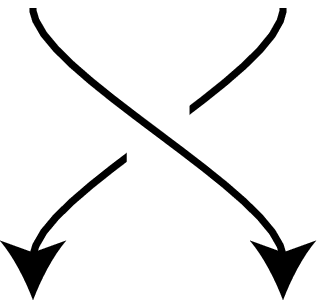}}
  $\Rightarrow$
  \raisebox{-8mm}{\includegraphics[scale=0.3]{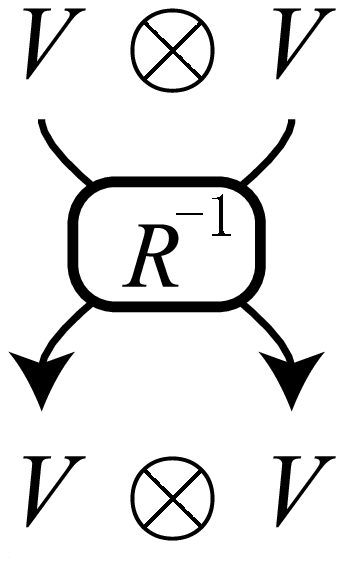}}
  \caption{Replace a generator with the $R$-matrix.}
  \label{fig:braid_homomorphism}
\end{figure}
\begin{example}
For the braid $\sigma_1\sigma_2^{-1}\sigma_1\sigma_2^{-1}$ the corresponding homomorphism is give as follows (Figure~\ref{fig:fig_8_homomorphism}):
\begin{equation*}
  \Phi(\sigma_1\sigma_2^{-1}\sigma_1\sigma_2^{-1})
  =
  (R\otimes\Id_V)(\Id_V\otimes R^{-1})(R\otimes\Id_V)(\Id_V\otimes R^{-1}).
\end{equation*}
\begin{figure}[h]
  \includegraphics[scale=0.3]{braid_small.eps}
  \quad\raisebox{20mm}{$\xrightarrow{\Phi}$}\quad
  \includegraphics[scale=0.3]{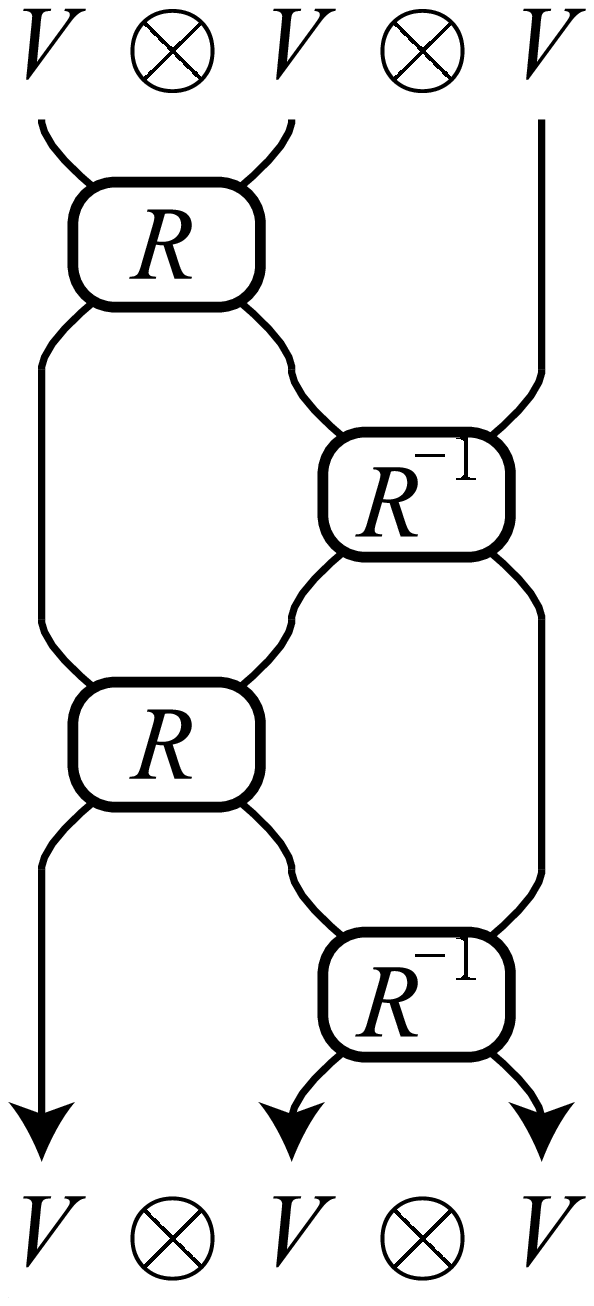}
  \caption{A braid and the corresponding homomorhism}
  \label{fig:fig_8_homomorphism}
\end{figure}
\end{example}
\subsection{Invariant}
Let $(R,\mu,a,b)$ be an enhanced Yang--Baxter operator on an $N$-dimensional vector space $V$.
\begin{definition}
For an $n$-braid $\beta$, we define $T_{(R,\mu,a,b)}(\beta)\in\C$ by the following formula.
\begin{equation*}
  T_{(R,\mu,a,b)}(\beta)
  :=
  a^{-w(\beta)}
  b^{-n}
  \Tr_1
  \Bigl(
    \Tr_2
    \bigl(
      \cdots
      \left(
        \Tr_n\left(\Phi(\beta)\mu^{\otimes n}\right)
      \right)
      \cdots
    \bigr)
  \Bigr),
\end{equation*}
where $w(\beta)$ is the sum of the exponents in $\beta$.
Note that $\Tr_1\colon\End(V)\to\C$ is the usual trace.
\end{definition}
\begin{example}
For the braid $\sigma_1\sigma_2^{-1}\sigma_1\sigma_2^{-1}$, we have
\begin{multline*}
  T_{(R,\mu,a,b)}(\sigma_1\sigma_2^{-1}\sigma_1\sigma_2^{-1})
  \\
  =
  b^{-2}
  \Tr_1\bigl(\Tr_2(\Tr_3(
  (R\otimes\Id_V)(\Id_V\otimes R^{-1})(R\otimes\Id_V)(\Id_V\otimes R^{-1})
  (\mu\otimes\mu\otimes\mu)))\bigr)
\end{multline*}
since $w(\sigma_1\sigma_2^{-1}\sigma_1\sigma_2^{-1})=+1-1+1-1=0$ (Figure~\ref{fig:fig_8_invariant}).
\begin{figure}[h]
  \raisebox{-18mm}{\includegraphics[scale=0.3]{closure_small.eps}}
  \quad$\Rightarrow$\quad$a^{-w(\beta)}b^{-n}\times$
  \raisebox{-21mm}{\includegraphics[scale=0.3]{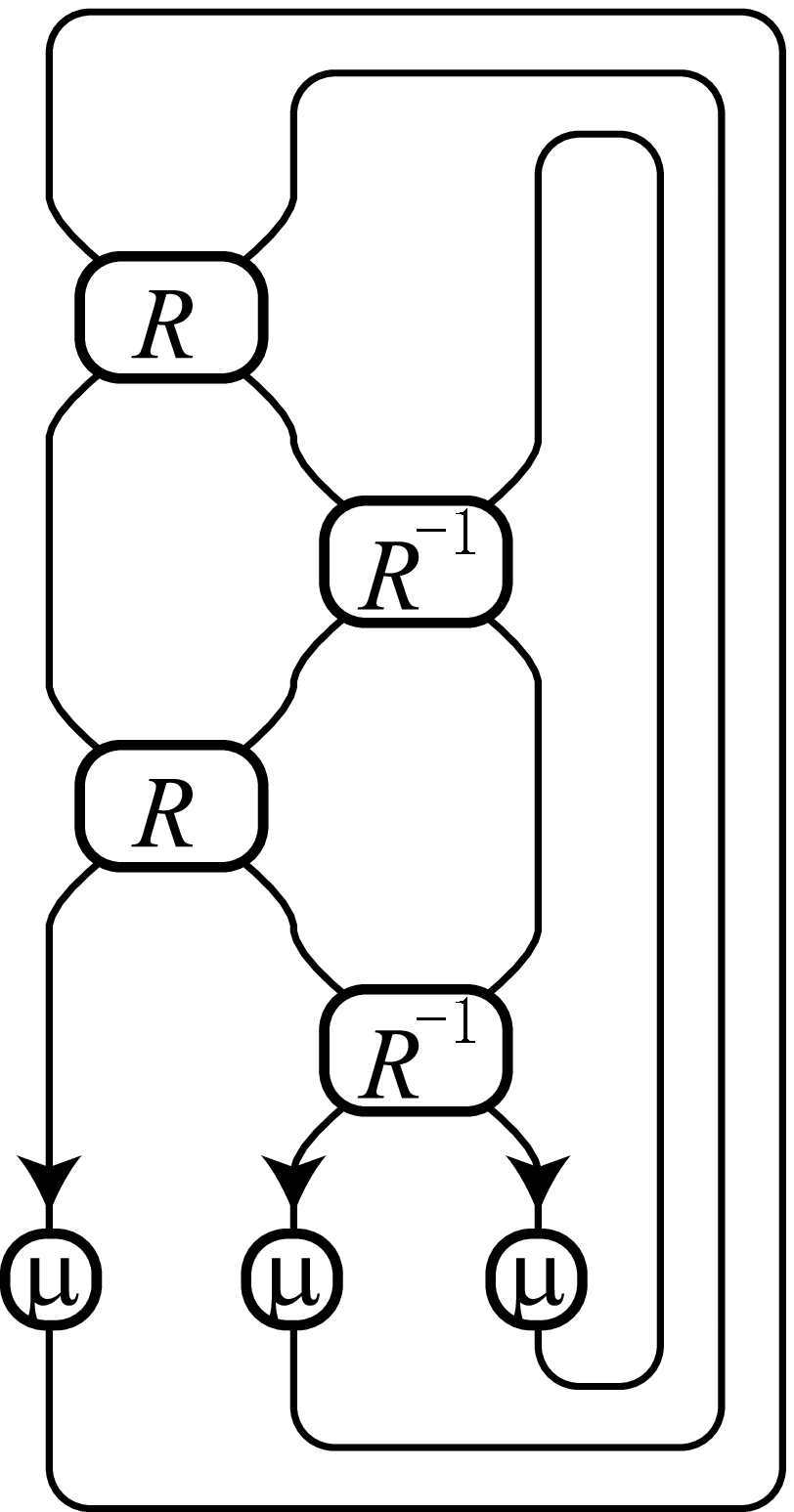}}
  \caption{A braid and its invariant}
  \label{fig:fig_8_invariant}
\end{figure}
\end{example}
\par
We can show that $T_{(R,\mu,a,b)}$ gives a link invariant.
\begin{theorem}[Turaev \cite{Turaev:INVEM88}]
If $\beta$ and $\beta'$ present the same link, then $T_{R,\mu,a,b}(\beta)=T_{R,\mu,a,b}(\beta')$.
\end{theorem}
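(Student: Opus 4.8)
The plan is to invoke Markov's theorem (Theorem~\ref{thm:Markov}): since $\beta$ and $\beta'$ present the same link, they are joined by a finite sequence of conjugations, stabilizations and destabilizations, so it suffices to prove that $T_{(R,\mu,a,b)}$ is unchanged under each of these three moves. First I would record some preliminaries. Property~(1) of Definition~\ref{def:YB}, the Yang--Baxter equation, together with the obvious commutation of $R$-matrices placed in disjoint tensor slots, shows that $\sigma_i\mapsto\Id_V^{\otimes(i-1)}\otimes R\otimes\Id_V^{\otimes(n-i-1)}$ respects the defining relations of $B_n$; hence $\Phi\colon B_n\to\mathrm{GL}(V^{\otimes n})$ is a well-defined homomorphism, and in particular $\Phi(\gamma_1\gamma_2)=\Phi(\gamma_1)\Phi(\gamma_2)$. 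Property~(2), namely $R(\mu\otimes\mu)=(\mu\otimes\mu)R$ (which also gives $R^{-1}(\mu\otimes\mu)=(\mu\otimes\mu)R^{-1}$), shows slot by slot that $\Phi(\gamma)$ commutes with $\mu^{\otimes n}$ for every $\gamma\in B_n$. Finally, a look at components shows that the iterated partial trace $\Tr_1\circ\Tr_2\circ\cdots\circ\Tr_n$ occurring in $T_{(R,\mu,a,b)}$ is exactly the ordinary trace on $\End(V^{\otimes n})$.

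For a conjugation, written as $\gamma\mapsto\alpha^{-1}\gamma\alpha$, the strand number $n$ and the exponent sum $w$ are both unchanged, so only the trace factor needs attention. Using the preliminaries,
\[
  \Tr_1\bigl(\cdots\Tr_n\bigl(\Phi(\alpha^{-1}\gamma\alpha)\mu^{\otimes n}\bigr)\cdots\bigr)
  =\operatorname{tr}\bigl(\Phi(\alpha)^{-1}(\Phi(\gamma)\mu^{\otimes n})\Phi(\alpha)\bigr)
  =\operatorname{tr}\bigl(\Phi(\gamma)\mu^{\otimes n}\bigr)
\]
by cyclicity of the ordinary trace; hence $T_{(R,\mu,a,b)}$ is conjugation invariant.

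For a stabilization it suffices, by the evident symmetry, to treat $\beta\mapsto\beta\sigma_n^{\pm1}$ with $\beta\in B_n\subset B_{n+1}$, the inclusion sending $\Phi(\beta)$ to $\Phi(\beta)\otimes\Id_V$. Then $w(\beta\sigma_n^{\pm1})=w(\beta)\pm1$ and the number of strands becomes $n+1$, so the prefactor $a^{-w}b^{-n}$ acquires an extra factor $a^{\mp1}b^{-1}$. For the trace factor I would compute the innermost partial trace $\Tr_{n+1}$ first. Since $\sigma_n^{\pm1}$ involves only the last two slots and $\mu^{\otimes(n+1)}=\mu^{\otimes(n-1)}\otimes\mu\otimes\mu$,
\[
  \Phi(\beta\sigma_n^{\pm1})\mu^{\otimes(n+1)}
  =(\Phi(\beta)\otimes\Id_V)\bigl(\mu^{\otimes(n-1)}\otimes R^{\pm1}(\mu\otimes\mu)\bigr);
\]
as $\Phi(\beta)\otimes\Id_V$ acts as the identity on the last slot it comes out of $\Tr_{n+1}$, leaving $\Phi(\beta)\bigl(\mu^{\otimes(n-1)}\otimes\Tr_2(R^{\pm1}(\mu\otimes\mu))\bigr)$. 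Writing $\mu\otimes\mu=(\Id_V\otimes\mu)(\mu\otimes\Id_V)$ and taking the factor $\mu\otimes\Id_V$, which acts on the first slot only, out of $\Tr_2$, property~(3) of Definition~\ref{def:YB} yields $\Tr_2(R^{\pm1}(\mu\otimes\mu))=a^{\pm1}b\,\mu$. Therefore $\Tr_{n+1}\bigl(\Phi(\beta\sigma_n^{\pm1})\mu^{\otimes(n+1)}\bigr)=a^{\pm1}b\,\Phi(\beta)\mu^{\otimes n}$; applying the remaining traces $\Tr_1\circ\cdots\circ\Tr_n$ and reinstating the prefactor, the factor $a^{\pm1}b$ cancels the $a^{\mp1}b^{-1}$, so $T_{(R,\mu,a,b)}(\beta\sigma_n^{\pm1})=T_{(R,\mu,a,b)}(\beta)$. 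Destabilization is this same identity read in reverse.

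The conceptual content is brief; the step that takes the most care is the stabilization computation, and there the only genuine subtlety is the tensor-slot bookkeeping in the partial traces --- concretely the two elementary identities $\Tr_{n+1}\bigl((X\otimes\Id_V)Y\bigr)=X\,\Tr_{n+1}(Y)$ and $\Tr_2\bigl(Z(W\otimes\Id_V)\bigr)=\Tr_2(Z)\,W$ --- which let property~(3) be applied in a single slot while the remaining tensor factors are carried along untouched. Everything else is formal.
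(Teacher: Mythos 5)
Your proposal is correct and follows essentially the same route as the paper: reduce via Markov's theorem to the braid relation (handled by the Yang--Baxter equation), conjugation (handled by property (2) together with cyclicity of the full trace), and stabilization (handled by peeling off the innermost partial trace and applying property (3)). The only cosmetic difference is that in the stabilization step you factor $\mu\otimes\mu=(\Id_V\otimes\mu)(\mu\otimes\Id_V)$ and pull $\mu\otimes\Id_V$ out of $\Tr_2$ to get $\Tr_2\bigl(R^{\pm1}(\mu\otimes\mu)\bigr)=a^{\pm1}b\,\mu$, whereas the paper first commutes $\mu^{\otimes(n+1)}$ using property (2) so that property (3) applies verbatim; the two computations are equivalent.
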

\begin{proof}[Sketch of a proof]
By Markov's theorem (Theorem~\ref{thm:Markov}) it is sufficient to prove that $T_{R,\mu,a,b}$ is invariant under a braid relation, a conjugation and a stabilization.
\par
The invariance under a braid relation $\sigma_i\sigma_{i+1}\sigma_i=\sigma_{i+1}\sigma_i\sigma_{i+1}$ follows from Figure~\ref{fig:braid_relation_YB}.
Note that the left hand side depicts a braid relation \eqref{eq:braid_presentation} and the right hand side depicts the corresponding Yang--Baxter equation (Definition~\ref{def:YB} (1)).
\begin{figure}[h]
  \raisebox{4mm}{$\underset{\text{\raisebox{-5mm}{braid relation}}}
     {\includegraphics[scale=0.3]{braid_relation1_small.eps}
      \quad\raisebox{12mm}{=}\quad
      \includegraphics[scale=0.3]{braid_relation2_small.eps}}$}
  \raisebox{16mm}{\quad$\xrightarrow{\Phi}$\quad}
  $\underset{\text{\raisebox{-1mm}{Yang--Baxter equation}}}
    {\includegraphics[scale=0.3]{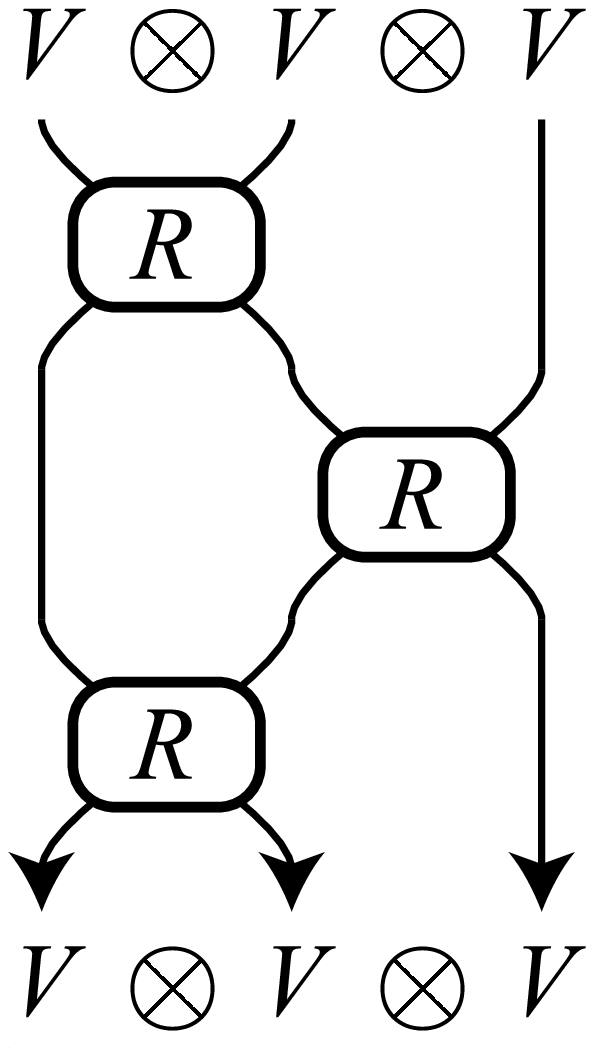}
     \quad\raisebox{16mm}{=}\quad
     \includegraphics[scale=0.3]{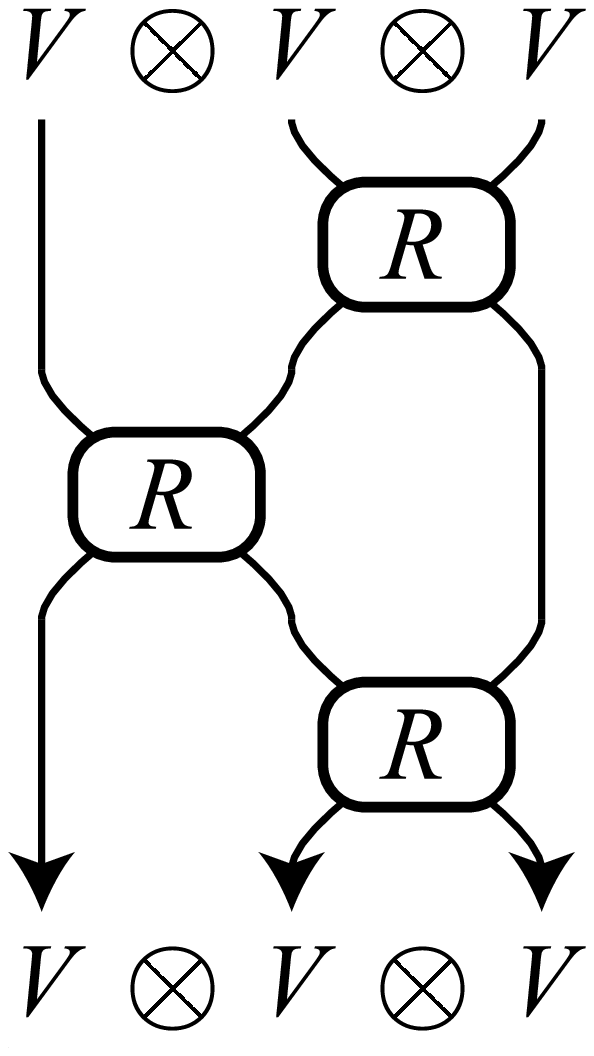}}$
  \caption{braid relation corresponds to the Yang--Baxter equation}
  \label{fig:braid_relation_YB}
\end{figure}
\par
The invariance under a conjugation follows from Figure~\ref{fig:invariance_conjugation}.
The first equality follows since $\Tr_k$ is invariant under a conjugation.
The second equality follows since $(\mu\otimes\mu)R=R(\mu\otimes\mu)$ (Definition~\ref{def:YB} (2)).
Note that the equality $(\mu\otimes\mu)R=R(\mu\otimes\mu)$ means that a pair $\mu\otimes\mu$ can pass through a crossing.
\begin{figure}[h]
  \includegraphics[scale=0.3]{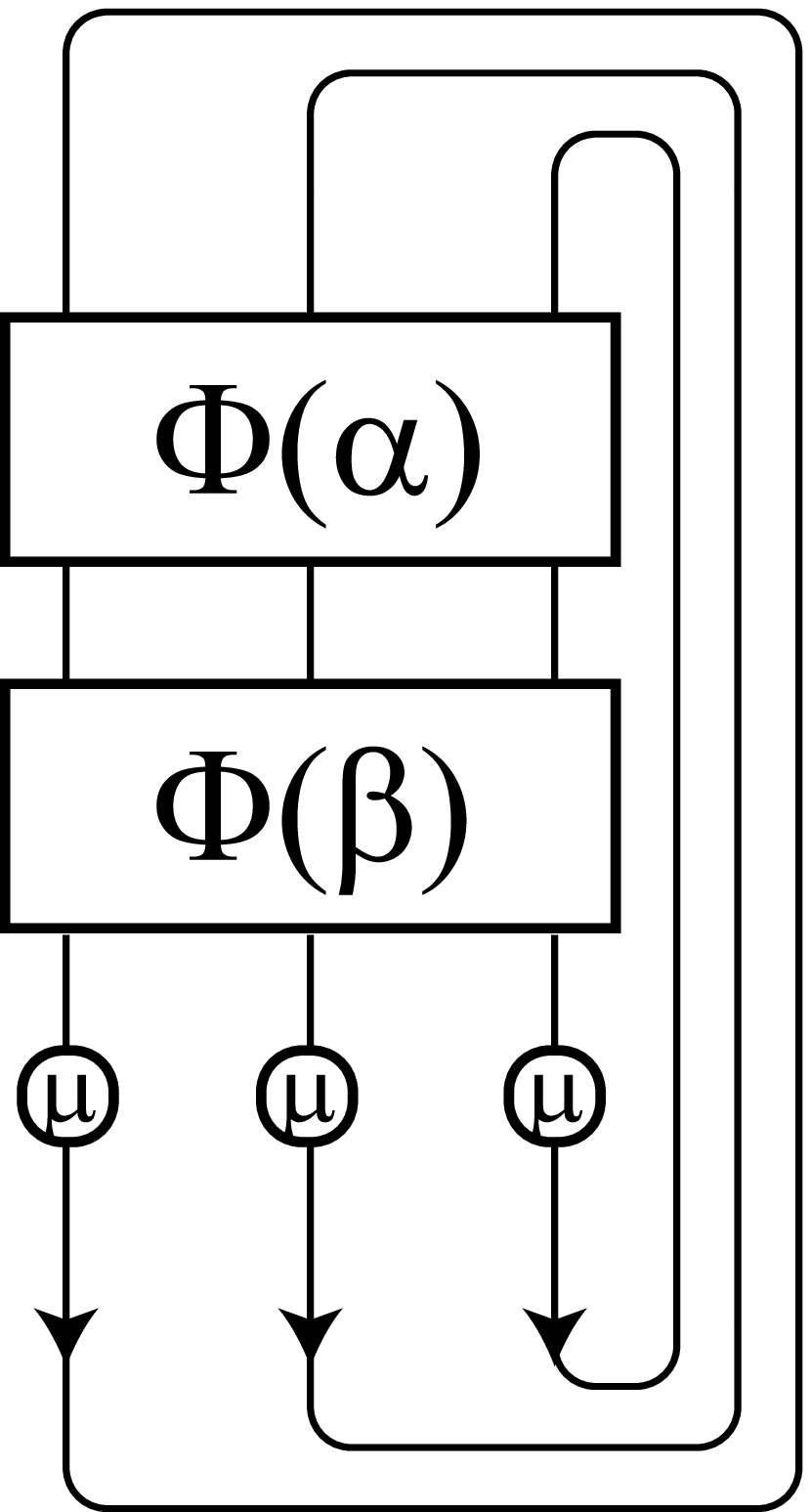}
  \quad\raisebox{24mm}{$=$}\quad
  \includegraphics[scale=0.3]{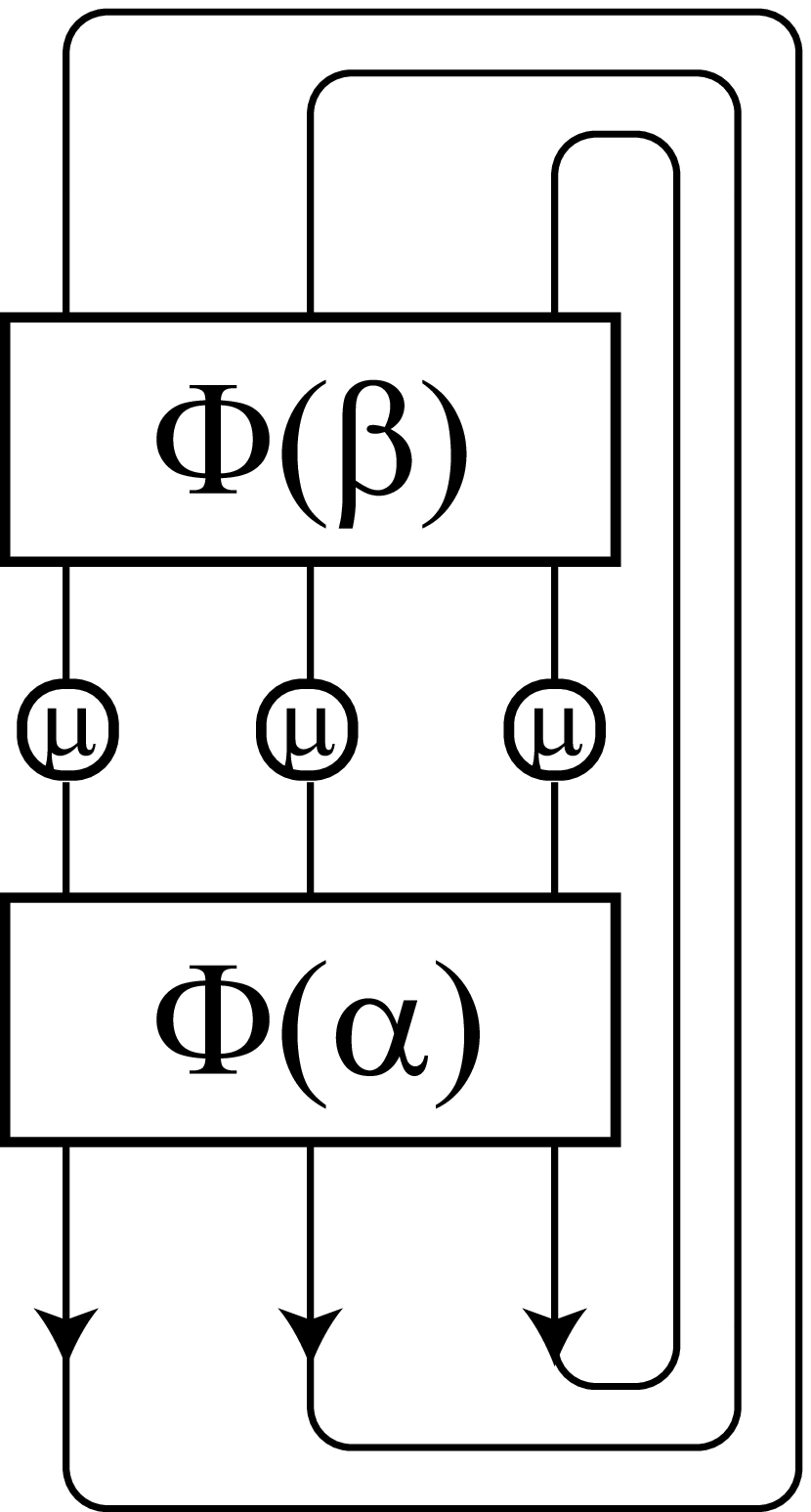}
  \quad\raisebox{24mm}{$=$}\quad
  \includegraphics[scale=0.3]{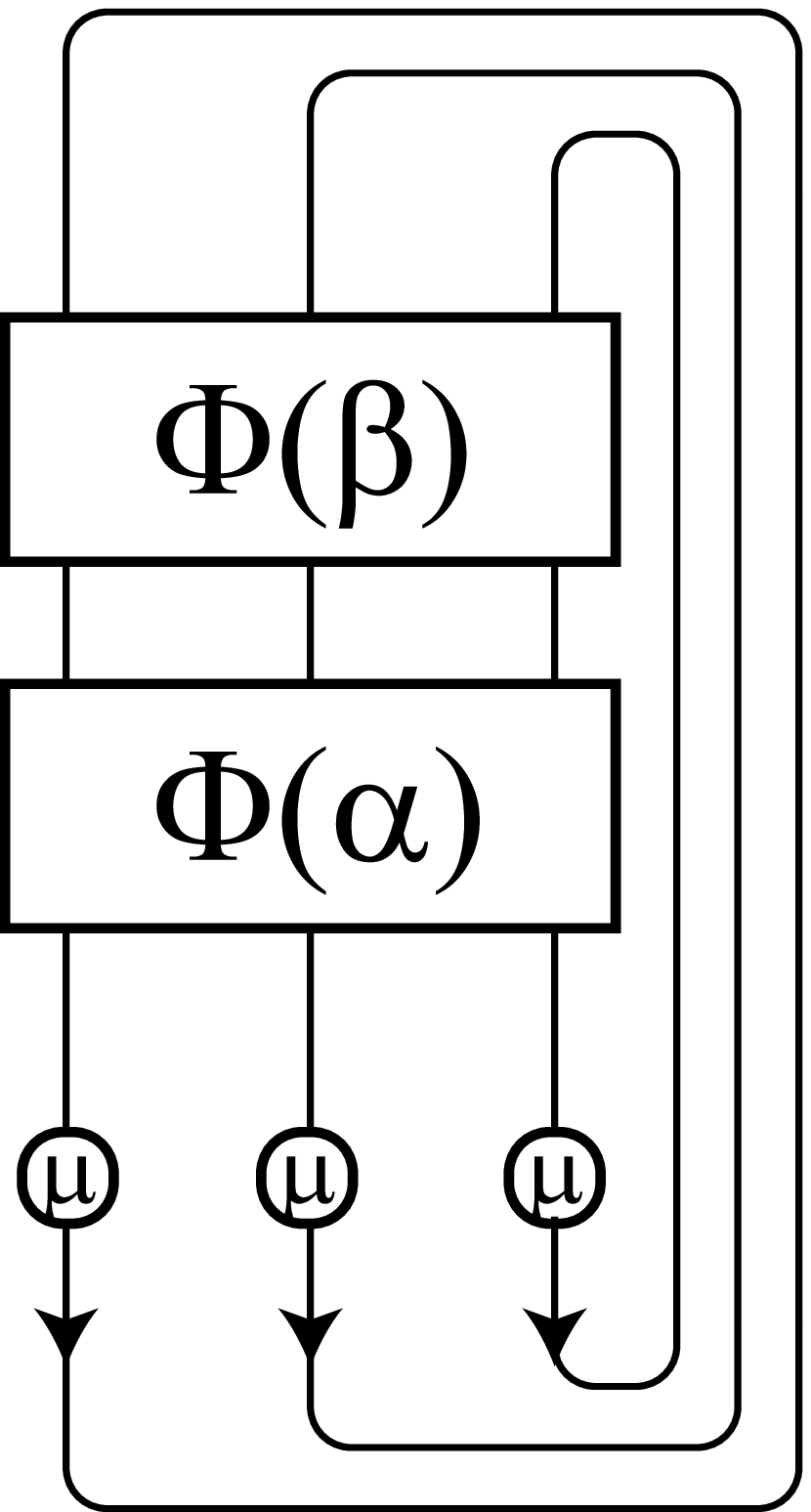}
  \caption{invariance under a conjugation}
  \label{fig:invariance_conjugation}
\end{figure}
\par
To prove the invariance under a stabilization, we first note that if a homomorphism $f\colon V^{\otimes n}\to V^{\otimes n}$ given by
\begin{equation*}
  f(e_{i_1}\otimes\dots\otimes e_{i_n})
  =
  \sum_{j_1,\dots,j_n=0}^{N-1}
  f_{i_1,\dots,i_n}^{j_1,\dots,j_n}(e_{j_1}\otimes\dots\otimes e_{j_n}),
\end{equation*}
then its $n$-fold trace is given by
\begin{equation*}
  \Tr_1(\cdots(\Tr_{n}(f))\cdots)
  =
  \sum_{j_1,\dots,j_n}f_{j_1,\dots,j_n}^{j_1,\dots,j_n}.
\end{equation*}
Therefore if $g$ is a homomorphism $g\colon V\otimes V\to V\otimes V$ given by $g_{l_1,l_2}^{k_1,k_2}$, then we have
\begin{multline*}
  \Tr_1
  \left(\cdots
    \left(\Tr_n
      \left(\Tr_{n+1}
        \bigl(
          (f\otimes\Id_V)(\Id_V^{\otimes(n-1)}\otimes g)
        \bigr)
      \right)
    \right)
  \right)
  \\
  =
  \sum_{j_1,\dots,j_n,k_n,k_{n+1}}
  f_{j_1,\dots,j_{n-1},j_n}^{j_1,\dots,j_{n-1},k_n}
  g_{k_n,k_{n+1}}^{j_n,k_{n+1}},
\end{multline*}
which coincides with the $n$-fold trace of the homomorphism $f\bigl(\Id_V^{\otimes(n-1)}\otimes\Tr_2(g)\bigr)\colon V^{\otimes n}\to V^{\otimes n}$.
\par
Therefore for $\beta\in B_n$ we have
\begin{equation*}
\begin{split}
  &\Tr_1
  \Biggl(
    \Tr_2
    \biggl(
      \cdots
      \Bigl(
        \Tr_n
        \left(
          \Tr_{n+1}\left(\Phi(\beta\sigma_n^{\pm1})\mu^{\otimes(n+1)}\right)
        \right)
      \Bigr)
      \cdots
    \biggr)
  \Biggr)
  \\
  =&
  \Tr_1
  \Biggl(
    \Tr_2
    \biggl(
      \cdots
      \Bigl(
        \Tr_n
        \left(
          \Tr_{n+1}
          \left(
            (\mu^{\otimes n}\Phi(\beta)\otimes\Id_V)
            (\Id_V^{\otimes(n-1)}\otimes R^{\pm1}(\Id_V\otimes\mu))
          \right)
        \right)
      \Bigr)
      \cdots
    \biggr)
  \Biggr)
  \\
  =&
  \Tr_1
  \Biggl(
    \Tr_2
    \biggl(
      \cdots
      \Bigl(
        \Tr_n
        \left(
            (\mu^{\otimes n}\Phi(\beta))
            (\Id_V^{\otimes(n-1)}\otimes\Tr_2(R^{\pm1}(\Id_V\otimes\mu)))
        \right)
      \Bigr)
      \cdots
    \biggr)
  \Biggr)
  \\
  =&
  a^{\pm1}b
  \Tr_1
  \Biggl(
    \Tr_2
    \biggl(
      \cdots
      \Bigl(
        \Tr_n
        \left(
            (\mu^{\otimes n}\Phi(\beta))
        \right)
      \Bigr)
      \cdots
    \biggr)
  \Biggr)
  \\
  =&
  a^{\pm1}b
  \Tr_1
  \Biggl(
    \Tr_2
    \biggl(
      \cdots
      \Bigl(
        \Tr_n
        \left(
            (\Phi(\beta)\mu^{\otimes n})
        \right)
      \Bigr)
      \cdots
    \biggr)
  \Biggr),
\end{split}
\end{equation*}
since $\Tr_2\bigl(R^{\pm1}(\Id_V\otimes\mu\bigr)=a^{\pm1}b\Id_V$ (Definition~\ref{def:YB} (3)) as depicted in Figure~\ref{fig:invariance_stabilization}.
\begin{figure}[h]
  \includegraphics[scale=0.3]{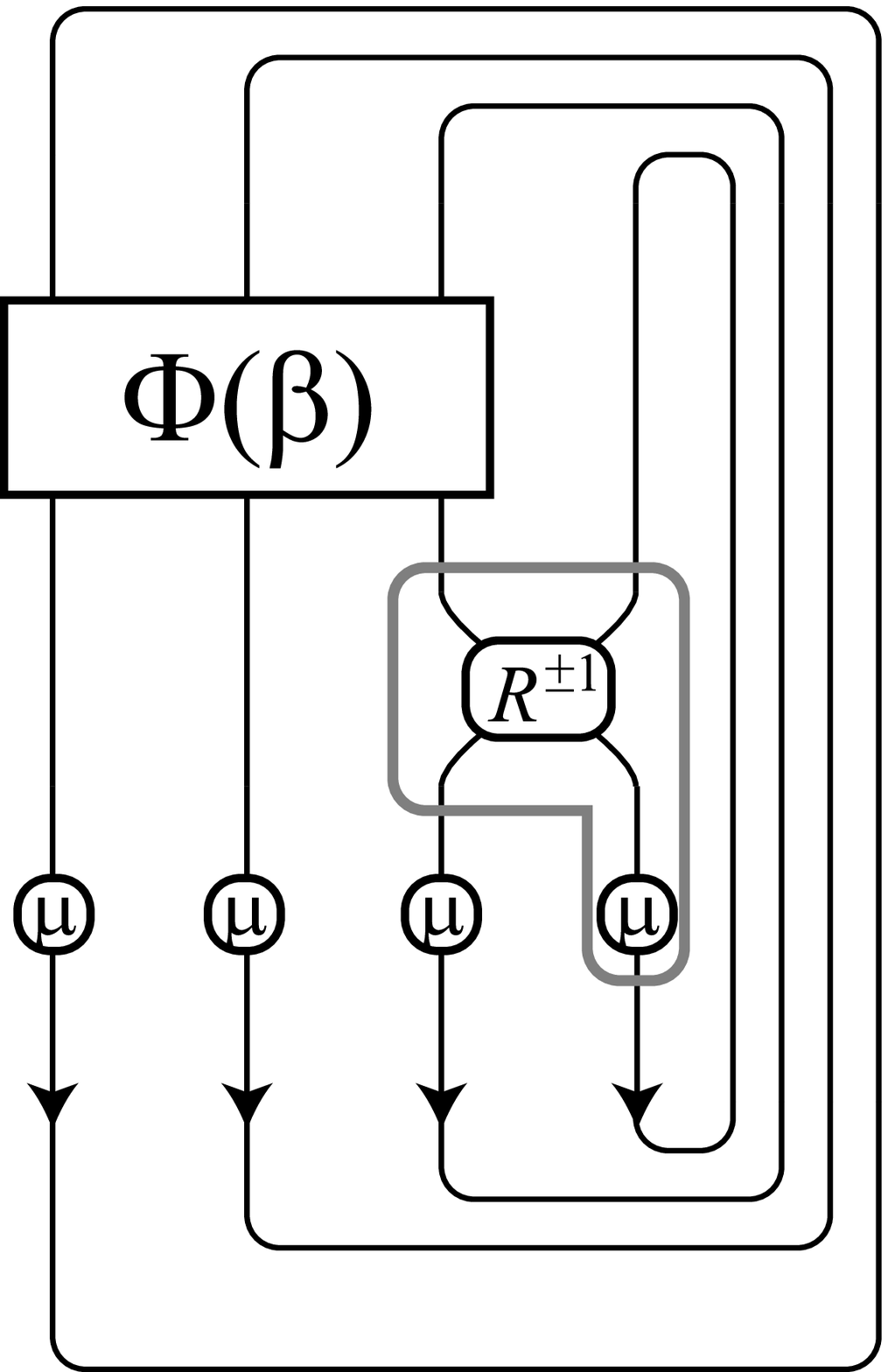}
  \quad\raisebox{24mm}{$=\quad a^{\pm1}b\,$}
  \includegraphics[scale=0.3]{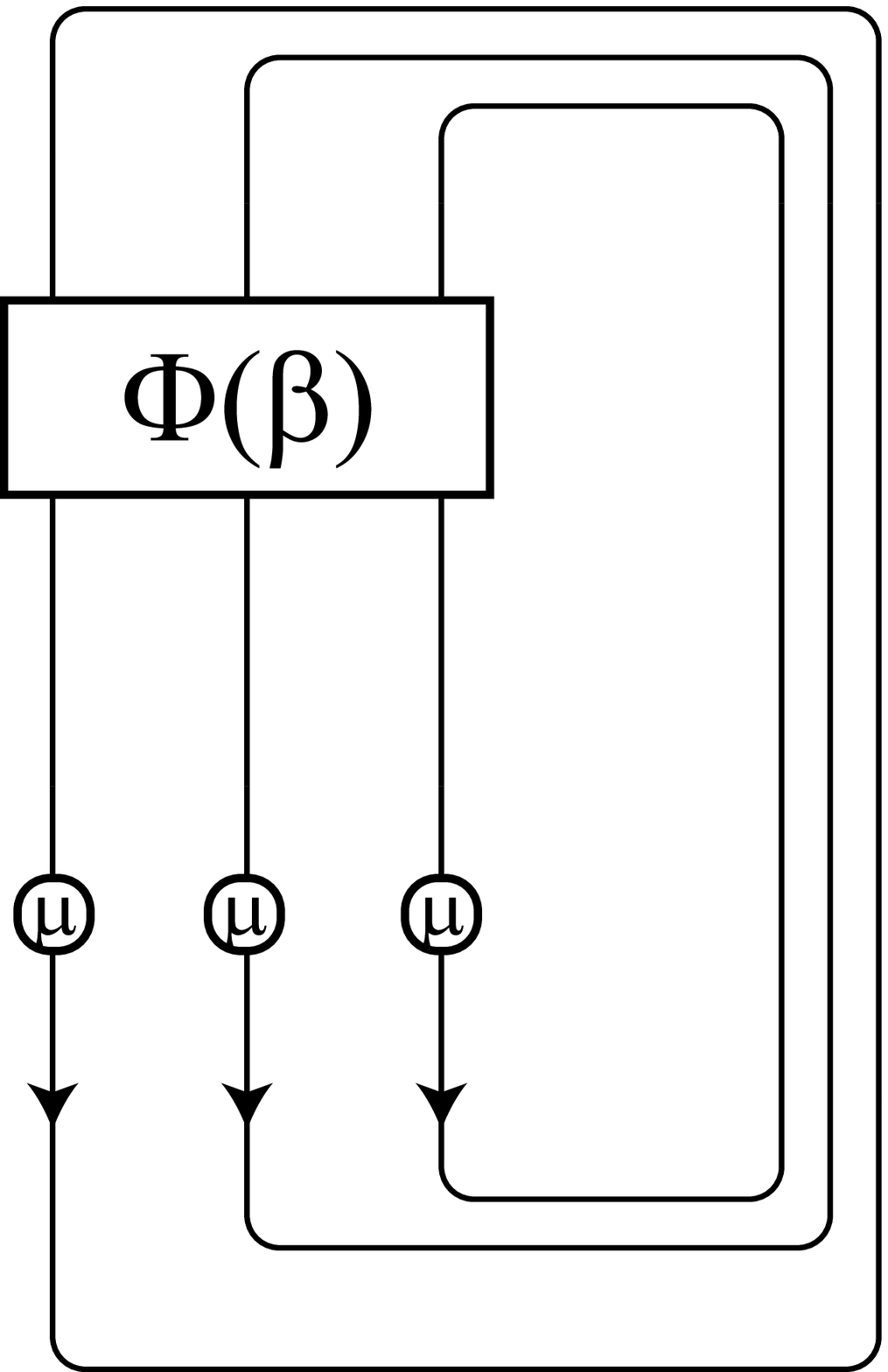}
  \caption{invariance under a stabilization}
  \label{fig:invariance_stabilization}
\end{figure}
Since $w(\beta\sigma_n^{\pm1})=w(\beta)\pm1$, the invariance under a stabilization follows.
\end{proof}
Therefore we can define a link invariant $T_{R,\mu,a,b}(L)$ to be $T_{R,\mu,a,b}(\beta)$ if $L$ is the closure of $\beta$.
\subsection{Quantum $(\mathfrak{g},V)$ invariant}
One of the important ways to construct an enhanced Yang--Baxter operator is to use a quantum group, which is a deformation of a Lie algebra.
\par
Let $\mathfrak{g}$ be a Lie algebra.
Then one can define a quantum group $U_q(\mathfrak{g})$ as a deformation of $\mathfrak{g}$ with $q$ a complex parameter (\cite{Drinfeld:ICM86}, \cite{Jimbo:LETMP1985}).
Given a representation $\rho\colon\mathfrak{g}\to\mathfrak{gl}(V)$ of $\mathfrak{g}$ one can construct an enhanced Yang--Baxter operator.
The corresponding invariant is called the quantum $(\mathfrak{g},V)$ invariant.
For details see \cite{Turaev:INVEM88}.
\par
To define the colored Jones polynomial we need the Lie algebra $\mathfrak{sl}_2(\C)$ and its $N$-dimensional irreducible representation $\rho_N\colon\mathfrak{sl}_2(\C)\to\mathfrak{gl}(V_N)$.
The quantum $(\mathfrak{sl}_2(\C),V_N)$ invariant is called the $N$-dimensional colored Jones polynomial $J_N(L;q)$.
\par
A precise definition is as follows.
\par
Put $V:=\C^N$ and define the $R$-matrix $R\colon V\otimes V\to V\otimes V$ by
\begin{equation*}
  R(e_k\otimes e_l)
  :=
  \sum_{i,j=0}^{N-1}R_{kl}^{ij}e_{i}\otimes e_{j},
\end{equation*}
where
\begin{equation}\label{eq:R}
\begin{split}
  R^{ij}_{kl}
  :=&
  \sum_{m=0}^{\min(N-1-i,j)}
  \delta_{l,i+m}\delta_{k,j-m}
  \frac{\{l\}!\{N-1-k\}!}{\{i\}!\{m\}!\{N-1-j\}!}
  \\
  &
  \times q^{\bigl(i-(N-1)/2\bigr)\bigl(j-(N-1)/2\bigr)-m(i-j)/2-m(m+1)/4},
\end{split}
\end{equation}
with $\{e_0,e_1,\dots,e_{N-1}\}$ is the standard basis of $V$, $\{m\}:=q^{m/2}-q^{-m/2}$ and $\{m\}!:=\{1\}\{2\}\cdots\{m\}$.
Here $q$ is a complex parameter.
A homomorphism $\mu\colon V\to V$ is given by
\begin{equation*}
  \mu(e_j)
  :=
  \sum_{i=0}^{N-1}\mu^i_j e_i
\end{equation*}
with
\begin{equation*}
  \mu^i_j
  :=
  \delta_{i,j}q^{(2i-N+1)/2}.
\end{equation*}
\par
Then it can be shown that $(R,\mu,q^{(N^2-1)/4},1)$ gives an enhanced Yang--Baxter operator.
\begin{definition}[colored Jones polynomial]
For an integer $N\ge1$, put $V:=\C^N$ and define $R$ and $\mu$ as above.
The $N$-dimensional colored Jones polynomial $J_N(L;q)$ for a link $L$ is defined as
\begin{equation*}
  J_N(L;q):=T_{(R,\mu,q^{(N^2-1)/4},1)}(\beta)\times\frac{\{1\}}{\{N\}},
\end{equation*}
where $\beta$ is a braid presenting the link $L$.
\end{definition}
\begin{remark}
Note that $J_N(\text{unknot};q)=1$ since
\begin{equation*}
  \Tr_1(\mu)
  =
  \sum_{i=0}^{N-1}
  q^{(2i-N+1)/2}
  =
  \frac{\{N\}}{\{1\}}.
\end{equation*}
\end{remark}
The two-dimensional colored Jones polynomial $J_2(L;q)$ is (a version) the original Jones polynomial \cite{Jones:BULAM385} as shown below.
\begin{lemma}
Let $L_+$, $L_-$, and $L_0$ be a skein triple, that is, they are the same links except for a small disk as shown in Figure~$\ref{fig:skein_triple}$.
\begin{figure}[h]
  \raisebox{4mm}{$L_+:$}
  \includegraphics[scale=0.3]{crossing_pos_small.eps}\quad,\quad
  \raisebox{4mm}{$L_-:$}
  \includegraphics[scale=0.3]{crossing_neg_small.eps}\quad,\quad
  \raisebox{4mm}{$L_0:$}
  \includegraphics[scale=0.3]{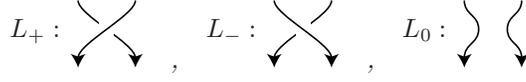}
  \caption{skein triple}
  \label{fig:skein_triple}
\end{figure}
Then we have the following skein relation:
\begin{equation*}
  q J_2(L_+;q)-q^{-1}J_2(L_-;q)=(q^{1/2}-q^{-1/2})J_2(L_0;q).
\end{equation*}
\end{lemma}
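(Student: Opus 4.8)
The plan is to reduce the skein relation to a single quadratic identity satisfied by the $R$-matrix of \eqref{eq:R} with $N=2$, and then to verify that identity by a short direct computation. First I would \emph{localize}: since $L_+$, $L_-$ and $L_0$ are given by three diagrams that agree outside a small disk, I would run Alexander's braiding procedure (Theorem~\ref{thm:Alexander}) on the part lying outside that disk, so that $L_+$, $L_-$ and $L_0$ become the closures of
\begin{equation*}
  \beta_+=\gamma_1\sigma_i\gamma_2,\qquad
  \beta_-=\gamma_1\sigma_i^{-1}\gamma_2,\qquad
  \beta_0=\gamma_1\gamma_2
\end{equation*}
for one fixed pair $\gamma_1,\gamma_2\in B_n$ and one index $i$. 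Then
\begin{equation*}
  \Phi(\beta_{\pm})
  =\Phi(\gamma_1)\,\bigl(\Id_V^{\otimes(i-1)}\otimes R^{\pm1}\otimes\Id_V^{\otimes(n-i-1)}\bigr)\,\Phi(\gamma_2),
  \qquad
  \Phi(\beta_0)=\Phi(\gamma_1)\Phi(\gamma_2),
\end{equation*}
so $\Phi(\beta_+)$, $\Phi(\beta_-)$ and $\Phi(\beta_0)$ are one and the same linear map evaluated at $R$, $R^{-1}$ and $\Id_{V\otimes V}$.

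Next I would \emph{reduce to an operator identity}. The assignment $X\mapsto\Tr_1\bigl(\cdots\bigl(\Tr_n(X\,\mu^{\otimes n})\bigr)\cdots\bigr)$ is linear; the three braids have the same number of strands $n$; we have $b=1$ and the factor $\{1\}/\{2\}$ in the definition of $J_2$ is common to all three; and $w(\beta_{\pm})=w(\beta_0)\pm1$, so that $a^{-w(\beta_{\pm})}=a^{\mp1}\,a^{-w(\beta_0)}$. Putting these together, the quantity $q\,J_2(L_+;q)-q^{-1}J_2(L_-;q)-(q^{1/2}-q^{-1/2})J_2(L_0;q)$ is a fixed multiple of
\begin{equation*}
  \Tr_1\Bigl(\cdots\Bigl(\Tr_n\bigl(\Phi(\gamma_1)\,(\Id_V^{\otimes(i-1)}\otimes\Psi\otimes\Id_V^{\otimes(n-i-1)})\,\Phi(\gamma_2)\,\mu^{\otimes n}\bigr)\Bigr)\cdots\Bigr),
\end{equation*}
where, since $a=q^{(N^2-1)/4}=q^{3/4}$ for $N=2$,
\begin{equation*}
  \Psi:=q\,a^{-1}R-q^{-1}a\,R^{-1}-(q^{1/2}-q^{-1/2})\Id_{V\otimes V}
       =q^{1/4}R-q^{-1/4}R^{-1}-(q^{1/2}-q^{-1/2})\Id_{V\otimes V}.
\end{equation*}
Hence it suffices to prove $\Psi=0$, whereupon the normalization $\{1\}/\{2\}$ and all writhe factors drop out automatically.

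Finally, the \emph{$N=2$ computation}. A direct evaluation of \eqref{eq:R} shows that in the basis $\{e_0\otimes e_0,\,e_0\otimes e_1,\,e_1\otimes e_0,\,e_1\otimes e_1\}$ the operator $R$ acts as multiplication by $q^{1/4}$ on $e_0\otimes e_0$ and on $e_1\otimes e_1$, while on the plane spanned by $e_0\otimes e_1$ and $e_1\otimes e_0$ it has trace $(q^{1/2}-q^{-1/2})q^{-1/4}$ and determinant $-q^{-1/2}$. The discriminant of the resulting quadratic is $(q^{1/2}-q^{-1/2})^2q^{-1/2}+4q^{-1/2}=(q^{1/2}+q^{-1/2})^2q^{-1/2}$, a perfect square, so the eigenvalues of $R$ are $q^{1/4}$ (with multiplicity three) and $-q^{-3/4}$ (with multiplicity one). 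Therefore $R$ is diagonalizable and satisfies $(R-q^{1/4}\Id_{V\otimes V})(R+q^{-3/4}\Id_{V\otimes V})=0$; multiplying this by $R^{-1}$ and then by $q^{1/4}$ gives
\begin{equation*}
  q^{1/4}R-q^{-1/4}R^{-1}=(q^{1/2}-q^{-1/2})\Id_{V\otimes V},
\end{equation*}
that is $\Psi=0$, which together with the previous step proves the skein relation.

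The step I expect to require the most care is the localization: one must make sure the distinguished crossing of the skein triple can be realized as a single generator $\sigma_i^{\pm1}$ of the braid group, with the trivial tangle as its $L_0$-resolution. I would handle this by running Alexander's braiding algorithm only on the portion of the diagram lying away from the distinguished disk (equivalently, by first isotoping so that near the disk the two relevant strands are vertical and adjacent). Everything after that is bookkeeping of powers of $q$ together with the $4\times4$ matrix computation above, which should present no real difficulty.
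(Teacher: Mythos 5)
Your proposal is correct and follows essentially the same route as the paper: present $L_\pm$ and $L_0$ as closures of braids $\beta\sigma_i^{\pm1}\beta'$ and $\beta\beta'$, absorb the writhe factors $a^{\mp1}=q^{\mp3/4}$, and reduce everything to the single operator identity $q^{1/4}R-q^{-1/4}R^{-1}=(q^{1/2}-q^{-1/2})\Id_{V\otimes V}$ for the $N=2$ $R$-matrix. The only cosmetic difference is that the paper checks this identity by inspecting the explicit $4\times4$ matrix, while you derive it from the eigenvalues $q^{1/4}$ and $-q^{-3/4}$ via the quadratic minimal polynomial of $R$; both computations are correct.
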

\begin{proof}
By the definition, the $R$-matrix is given by
\begin{equation*}
  R
  =
  \begin{pmatrix}
    q^{1/4}&0               &0       &0 \\
    0      &q^{1/4}-q^{-3/4}&q^{-1/4}&0 \\
    0      &q^{-1/4}        &0       &0 \\
    0      &                &0       &q^{1/4}
  \end{pmatrix}
\end{equation*}
with respect to the basis $\{e_0\otimes e_0,e_0\otimes e_1,e_1\otimes e_0,e_1\otimes e_1\}$ of $V\otimes V$, and $\mu$ is given by
\begin{equation*}
  \mu
  =
  \begin{pmatrix}
    q^{-1/2}&0 \\
    0       &q^{1/2}
  \end{pmatrix}
\end{equation*}
with respect to the basis $\{e_0,e_1\}$ of $V$.
\par
Therefore we can easily see that
\begin{equation}\label{eq:skein_R}
  q^{1/4}R-q^{-1/4}R^{-1}
  =
  (q^{1/2}-q^{-1/2})\Id_V\otimes\Id_V.
\end{equation}
\par
Since $L_+$, $L_-$, and $L_0$ can be presented by $n$-braids $\beta\sigma_i\beta'$, $\beta\sigma_i^{-1}\beta'$, and $\beta\beta'$ respectively, we have
\begin{equation*}
\begin{split}
  &
  \left(
    qJ_2(L_+;q)
    -
    q^{-1}J_2(L_-;q)
  \right)
  \times\frac{\{2\}}{\{1\}}
  \\
  =&
  q\times
  q^{-3(w(\beta\beta')+1)/4}
  \Tr_1(\Tr_2(\cdots(\Tr_n(\Phi(\beta\sigma_i\beta')\mu^{\otimes n}))))
  \\
  &\quad
  -
  q^{-1}\times
  q^{-3(w(\beta\beta')-1)/4}
  \Tr_1(\Tr_2(\cdots(\Tr_n(\Phi(\beta\sigma_i^{-1}\beta')\mu^{\otimes n}))))
  \\
  =&
  q^{-3w(\beta\beta')/4}
  \\
  &\times
  \left(
    q^{1/4}
    \Tr_1(\Tr_2(\cdots(\Tr_n(\Phi(\beta\sigma_i\beta')\mu^{\otimes n}))))
    -
    q^{-1/4}
    \Tr_1(\Tr_2(\cdots(\Tr_n(\Phi(\beta\sigma_i^{-1}\beta')\mu^{\otimes n}))))
  \right)
  \\
  =&
  q^{-3w(\beta\beta')/4}
  \\
  &\times
  \left\{
    \Tr_1(\Tr_2(\cdots(\Tr_n(
    \Phi(\beta)
    (\Id_V^{\otimes(i-1)}\otimes q^{1/4}R\otimes\Id_V^{\otimes(n-i-1)})
    \Phi(\beta')
    \mu^{\otimes n}))))
  \right.
  \\
  &\left.
  \qquad
    -
    \Tr_1(\Tr_2(\cdots(\Tr_n(
    \Phi(\beta)
    (\Id_V^{\otimes(i-1)}\otimes q^{-1/4}R^{-1}\otimes\Id_V^{\otimes(n-i-1)})
    \Phi(\beta')\mu^{\otimes n}))))
  \right\}
  \\
  \intertext{(from \eqref{eq:skein_R})}
  =&
  q^{-3w(\beta\beta')/4}
  (q^{1/2}-q^{-1/2})
  \Tr_1(\Tr_2(\cdots(\Tr_n(\Phi(\beta\beta')\mu^{\otimes n}))))
  \\
  =&
  (q^{1/2}-q^{-1/2})J_2(L_0;q)\times\frac{\{2\}}{\{1\}},
\end{split}
\end{equation*}
completing the proof.
\end{proof}
\begin{remark}
The original Jones polynomial $V(L;q)$ satisfies
\begin{equation*}
  q^{-1}V(L_+;q)-qV(L_-;q)
  =
  (q^{1/2}-q^{-1/2})V(L_0;q)
\end{equation*}
\cite[Theorem~12]{Jones:BULAM385}.
So we have $J_2(L;q)=(-1)^{\sharp(L)-1}V(L;q^{-1})$, where $\sharp(L)$ denotes the number of components of $L$.
\end{remark}
\subsection{Example of calculation}\label{subsec:calculation}
Put $\beta:=\sigma_1\sigma_2^{-1}\sigma_1\sigma_2^{-1}$.
Its closure $E$ is a knot called the figure-eight knot (Figure~\ref{fig:braid_presentation}).
We will calculate $J_N(E;q)$.
\par
Instead of calculating $\Tr_1(\Tr_2(\Tr_3(\Phi(\beta)\mu^{\otimes3})))\in\C$, we will calculate $\Tr_2(\Tr_3(\Phi(\beta)(\Id\otimes\mu\otimes\mu)))\in\End(V)$, which is a scalar multiple by Schur's lemma (for a proof see \cite[Lemma~3.9]{Kirby/Melvin:INVEM1991}).
See Figure~\ref{fig:tangle}.
\begin{figure}[h]
  \includegraphics[scale=0.3]{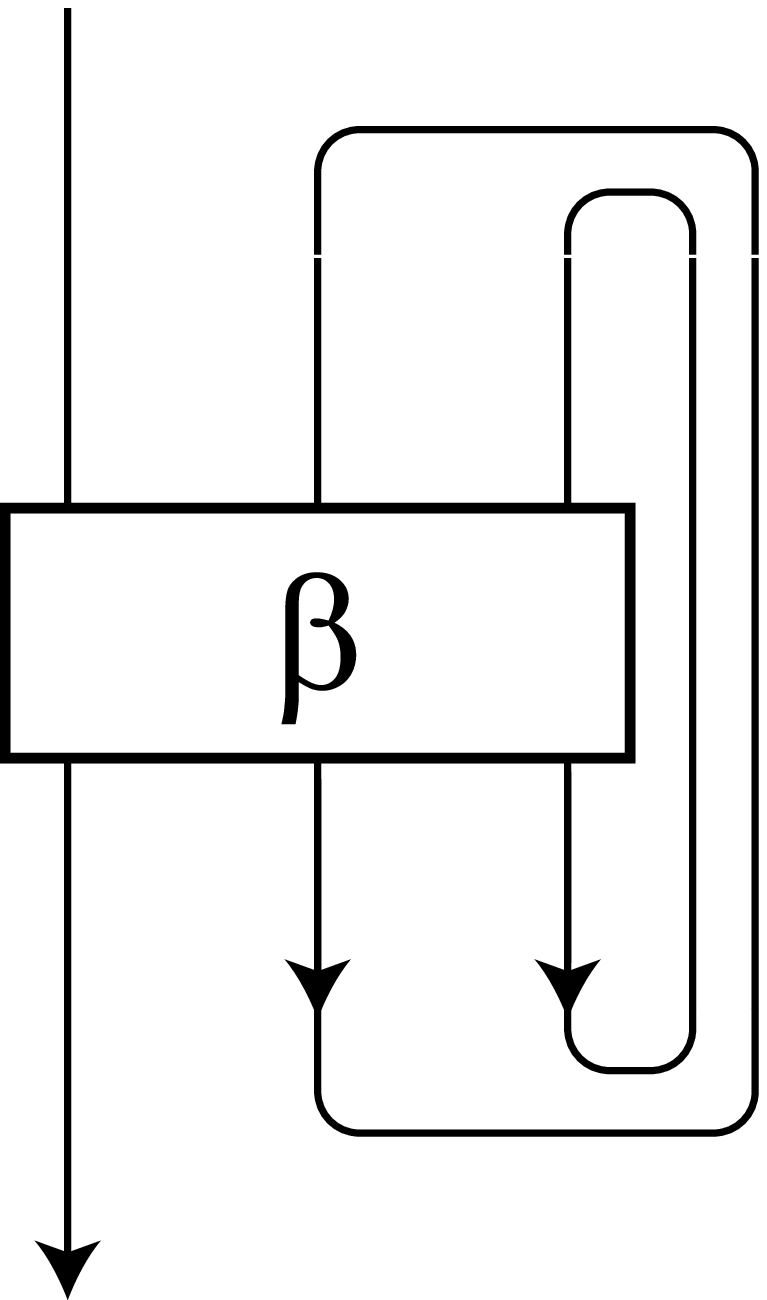}
  \quad\raisebox{19mm}{$\Rightarrow$}\quad
  \includegraphics[scale=0.3]{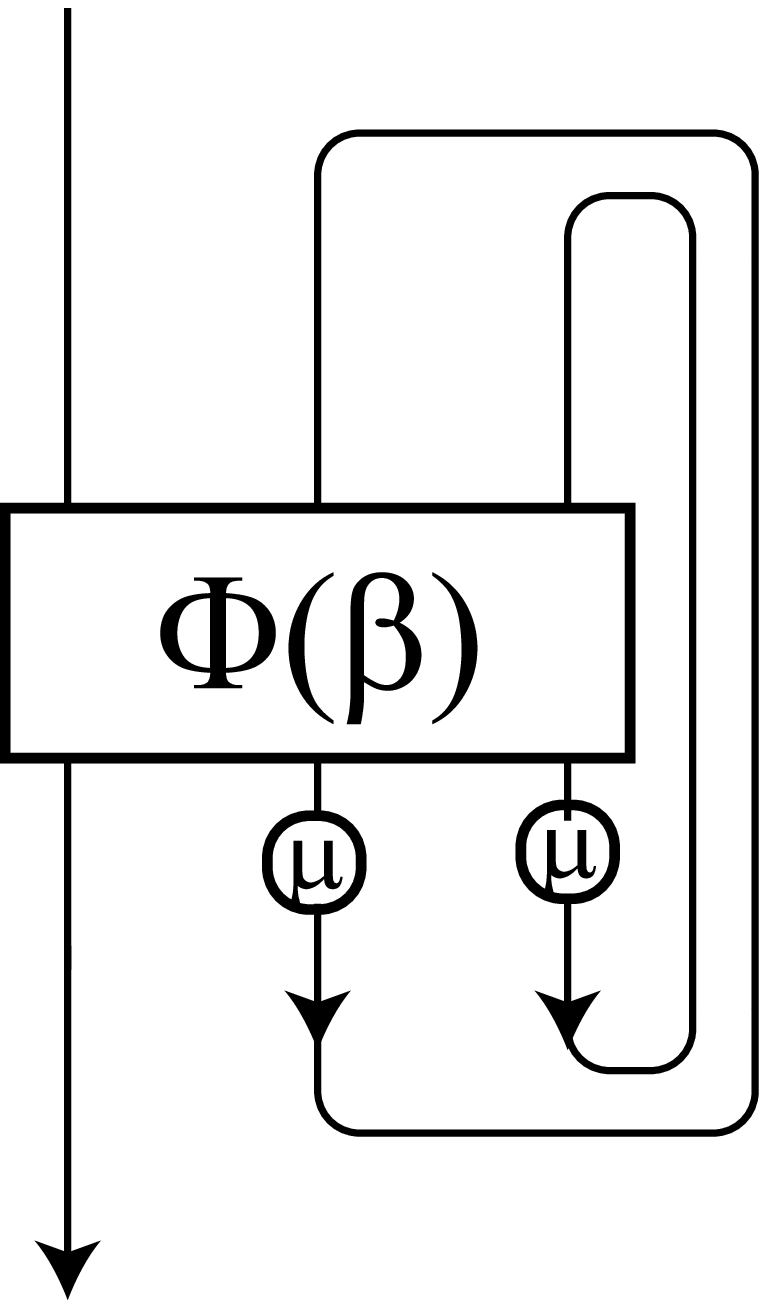}
  \caption{We close all the strings except for the first one.}
  \label{fig:tangle}
\end{figure}
\par
Then $\Tr_1(\Tr_2(\Tr_3(\Phi(\beta)\mu^{\otimes3})))$ coincides with the trace of $\mu$ times the scalar $S$.
Since
\begin{equation*}
\begin{split}
  T_{(R,\mu,q^{(N^2-1)/4},1)}(\beta)
  &=
  q^{-w(\beta)(N^2-1)/4}\Tr_1(\Tr_2(\Tr_3(\Phi(\beta)\mu^{\otimes3})))
  \\
  &=
  q^{-w(\beta)(N^2-1)/4}\Tr_1(S\Id_V)
  \\
  &=
  q^{-w(\beta)(N^2-1)/4}\sum_{i=0}^{N-1}S\, q^{(2i-N+1)/2}
  \\
  &=
  q^{-w(\beta)(N^2-1)/4}\frac{\{N\}}{\{1\}}S,
\end{split}
\end{equation*}
we have $J_N(L;q)=q^{-w(\beta)(N^2-1)/4}S=S$.
\par
We need an explicit formula for the inverse of the $R$-matrix, which is given by
\begin{equation}\label{eq:R_inverse}
\begin{split}
  (R^{-1})^{ij}_{kl}
  =&
  \sum_{m=0}^{\min(N-1-i,j)}
  \delta_{l,i-m}\delta_{k,j+m}
  \frac{\{k\}!\{N-1-l\}!}{\{j\}!\{m\}!\{N-1-i\}!}
  \\
  &\times
  (-1)^{m}q^{-\bigl(i-(N-1)/2\bigr)\bigl(j-(N-1)/2\bigr)-m(i-j)/2+m(m+1)/4}.
\end{split}
\end{equation}
\par
To calculate the scalar $S$, draw a diagram for the braid $\beta$ and close it except for the first string (Figure~\ref{fig:R0}).
\begin{figure}[h]
  \includegraphics[scale=0.3]{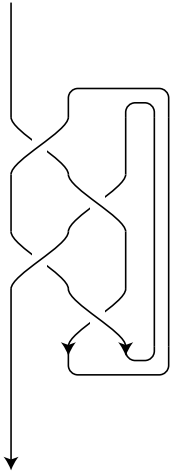}
  \caption{Draw the braid $\sigma_1\sigma_2^{-1}\sigma_1\sigma_2^{-1}$
  and close it except for the left-most one.}
  \label{fig:R0}
\end{figure}
Fix a basis $\{e_0,e_1,\dots,e_{N-1}\}$ of $\C^N$.
\par
Label each arc with a non-negative integer $i$ less than $N$, which corresponds to a basis element $e_i$, where our braid diagram is divided into arcs by crossings so that at each crossing four arcs meet.
Since the homomorphism $\Tr_2(\Tr_1(\Phi(\beta)\otimes2))$ is a scalar multiple, we choose any basis for the first (top-left) arc of Figure~\ref{fig:R0} and calculate the scalar.
For simplicity we choose $e_{N-1}$ and so we label the first arc with $N-1$ (Figure~\ref{fig:R1}).
\begin{figure}[h]
  \includegraphics[scale=0.3]{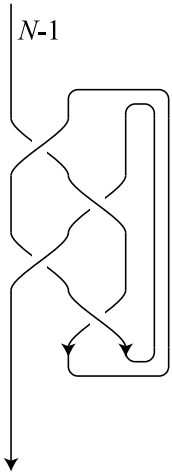}
  \caption{Label the first (top-left) arc with $N-1$.}
  \label{fig:R1}
\end{figure}
\par
Recall that we will associate the $R$-matrix or its inverse with each crossing as follows.
\begin{center}
  \raisebox{-9mm}{\includegraphics[scale=0.3]{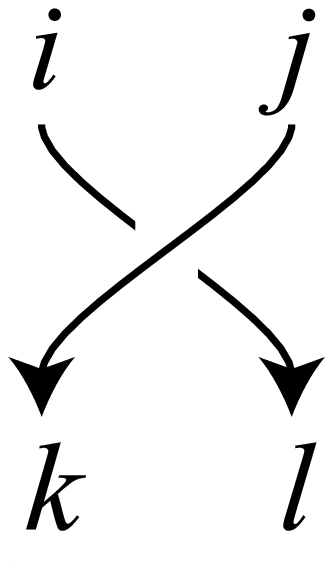}}
  \quad$\Rightarrow$\quad
  $R^{ij}_{kl}$\, ,\quad
  \raisebox{-9mm}{\includegraphics[scale=0.3]{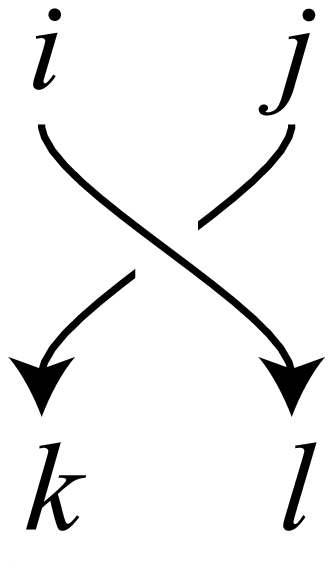}}
  \quad$\Rightarrow$\quad
  $(R^{-1})^{ij}_{kl}$
\end{center}
\par
Therefore we will label the other arcs following the following two rules:
\begin{enumerate}
\item[(i).]
At a positive crossing, the top-left label is less than or equal to the bottom-right label, the top-right label is greater than or equal to the bottom-left label, and their differences coincide (see \eqref{eq:R}).
\begin{center}
  \raisebox{-9mm}{\includegraphics[scale=0.3]{crossing_pos_ijkl_small.eps}}
  $: i+j=k+l$, $l\ge i$, $k\le j$,
\end{center}
\item[(ii).]
At a negative crossing, the top-left label is greater than or equal to the bottm-right label, the top-right label is less than or equal to the bottm-left label, and their differences coincide (see \eqref{eq:R_inverse}).
\begin{center}
  \raisebox{-9mm}{\includegraphics[scale=0.3]{crossing_neg_ijkl_small.eps}}
  $: i+j=k+l$, $l\le i$, $k\ge j$.
\end{center}
\end{enumerate}
\par
From Rule (i), the next arc should be labeled with $N-1$, and the difference at the top crossing is $0$ (Figure~\ref{fig:R2}).
This is why we chose $N-1$ for the label of the first arc.
\begin{figure}[h]
  \includegraphics[scale=0.3]{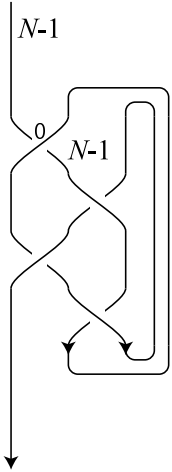}
  \caption{The label of the next arc should be $N-1$, and the difference
  at the top crossing is $0$}
  \label{fig:R2}
\end{figure}
\par
Label the top-right arc with $i$ with $0\le i\le N-1$ (Figure~\ref{fig:R3}).
\begin{figure}[h]
  \includegraphics[scale=0.3]{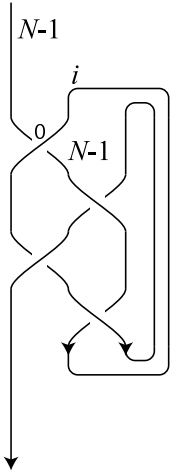}
  \caption{Label the top-right arc with $i$ ($0\le i\le N-1$).}
  \label{fig:R3}
\end{figure}
\par
The label of the left-middle arc should be $i$ since the difference at the top crossing is $0$.
\begin{figure}[h]
  \includegraphics[scale=0.3]{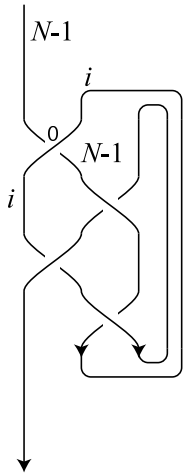}
  \caption{The label of the left-middle arc should be $i$
  since the difference at the top crossing is $0$.}
  \label{fig:R4}
\end{figure}
\par
Label the arcs indicated in Figure~\ref{fig:R6} with $j$ and $k$ with $0\le j\le N-1$ and $0\le k\le N-1$.
Then the difference at the second top crossing is $N-k-1$.
Therefore the arc between the second top crossing and the second bottom crossing should be labeled with $N-k+j-1$ from Rule (ii) (Figure~\ref{fig:R7}).
Since the label should be less than $N$, we have $j-k\le0$.
\begin{figure}[h]
  \includegraphics[scale=0.3]{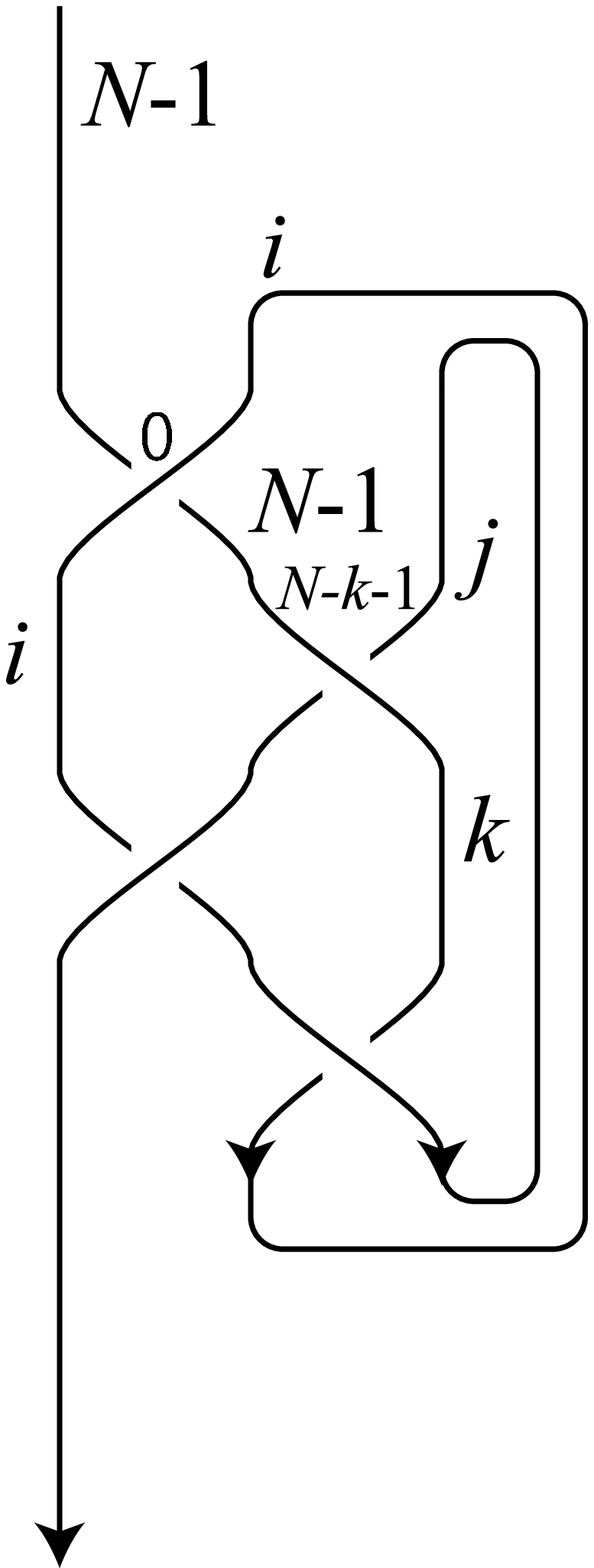}
  \caption{Choose $j$ and $k$.
  Then the difference at the second top crossing is $N-k-1$.}
  \label{fig:R6}
\end{figure}
\begin{figure}[h]
  \includegraphics[scale=0.3]{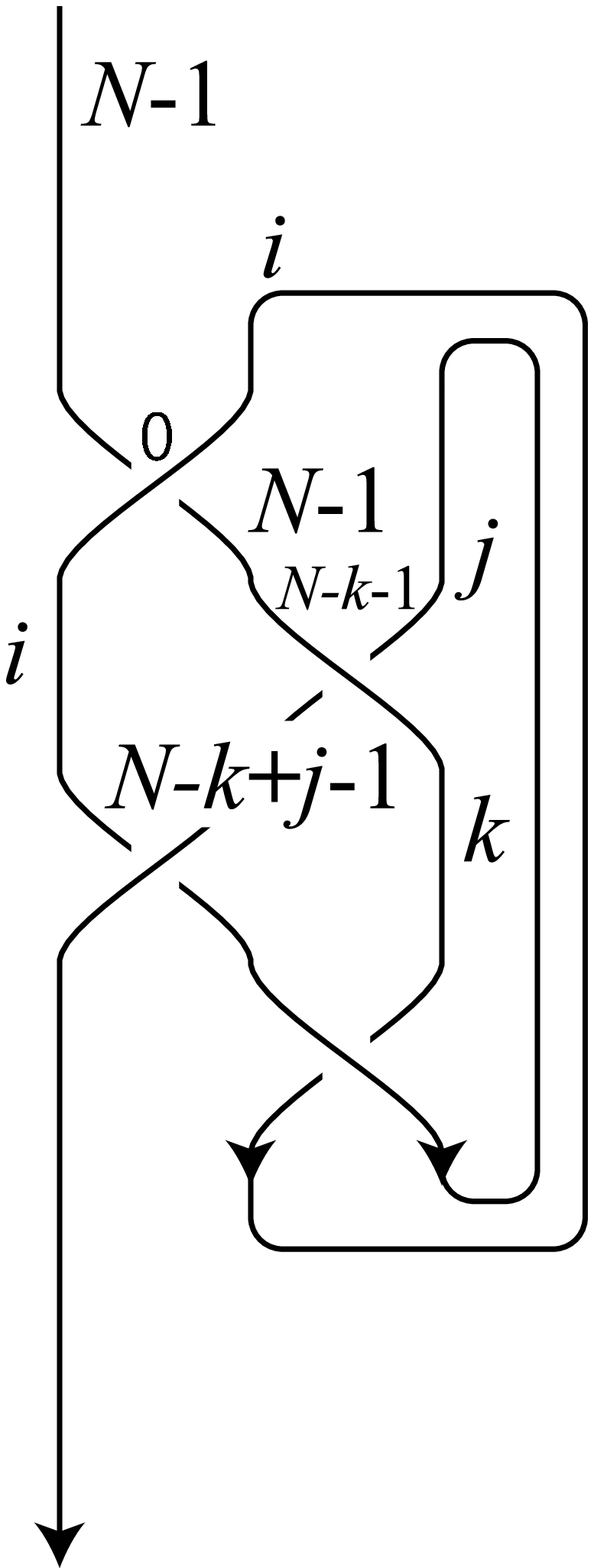}
  \caption{The label of the arc between the second top arc and
  the second bottom arc should be $N-k+j-1$.
  Note that $j-k\le0$.}
  \label{fig:R7}
\end{figure}
\par
Look at the bottom-most crossing and apply Rule (ii).
We see that $i\ge k$ and the difference at the bottom-most crossing is $i-k$.
So the label of the arc between the second bottom crossing and the bottom-most crossing is $i+j-k$ (Figure~\ref{fig:R8}).
We also see that the label of the bottom-most arc is $N-1$ as we expected.
\begin{figure}[h]
  \includegraphics[scale=0.3]{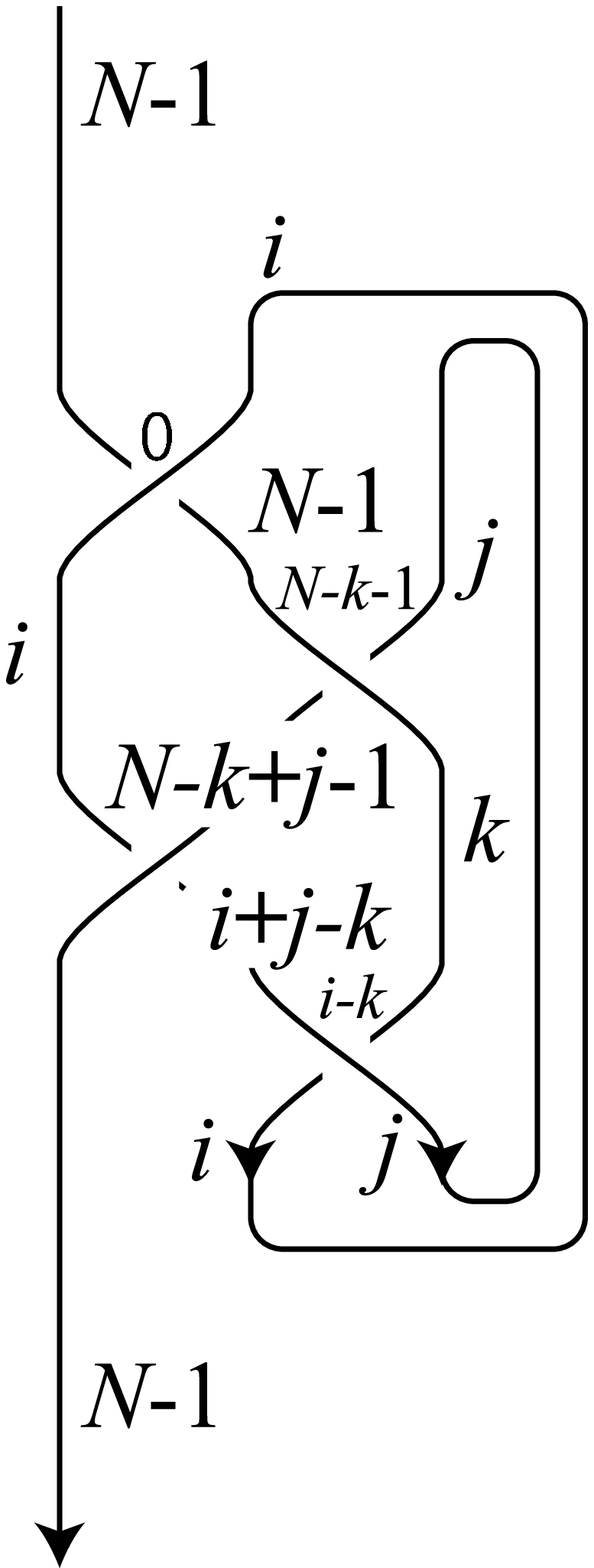}
  \caption{The label of the arc between the second bottom crossing and
  the bottom-most crossing should be $i+j-k$.
  Note that $j-k\ge0$}
  \label{fig:R8}
\end{figure}
\par
However from Rule (i) we have $j-k\ge0$ and so $k=j$.
Therefore we finally have the labeling as indicated in Figure~\ref{fig:R9}.
\begin{figure}[h]
  \includegraphics[scale=0.3]{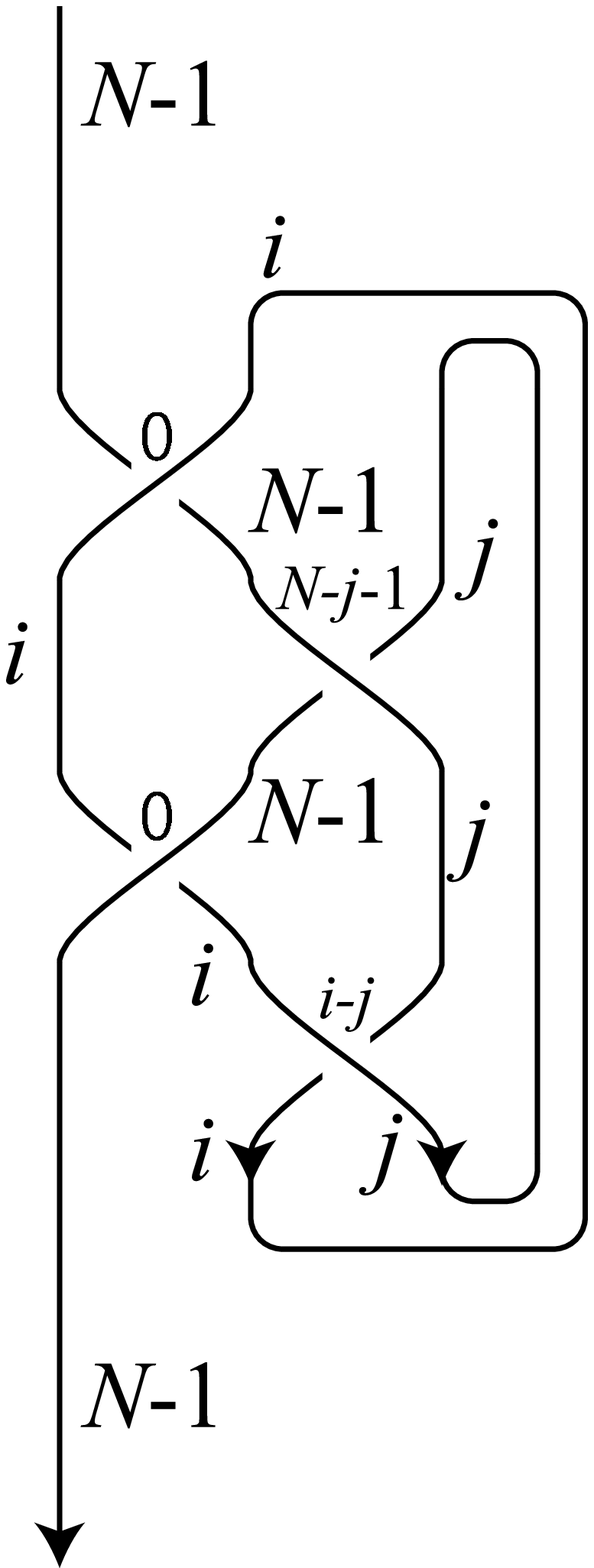}
  \caption{The label $k$ should coincide with $i$.}
  \label{fig:R9}
\end{figure}
\par
Now we can calculate the colored Jones polynomial.
We have
\begin{equation*}
\begin{split}
  J_N(E;q)
  &=
  \sum_{i\ge j}
  R^{N-1,i}_{i,N-1}\,
  (R^{-1})^{N-1,j}_{N-1,j}\,
  R^{i,N-1}_{N-1,i}\,
  (R^  {-1})^{i,j}_{i,j}\,
  \mu^{j}_{j}\,
  \mu^{i}_{i}
  \\
  &=
  \sum_{i\ge j}
  (-1)^{N-1+i}
  \frac{\{N-1\}!\{i\}!\{N-1-j\}!}{(\{j\}!)^2\{i-j\}!\{N-1-i\}!}
  \\
  &\phantom{=\sum_{i\ge j}}\times q^{(-i-i^2-2ij-2j^2+3N+6Ni+2Nj-3N^2)/4}.
\end{split}
\end{equation*}
\par
In this formula we need two summations.
To get a formula involving only one summation we regard the figure-eight knot $E$ as the closure of a tangle as shown in Figure~\ref{fig:fig8_calc}.
\begin{figure}[h]
  \includegraphics[scale=0.3]{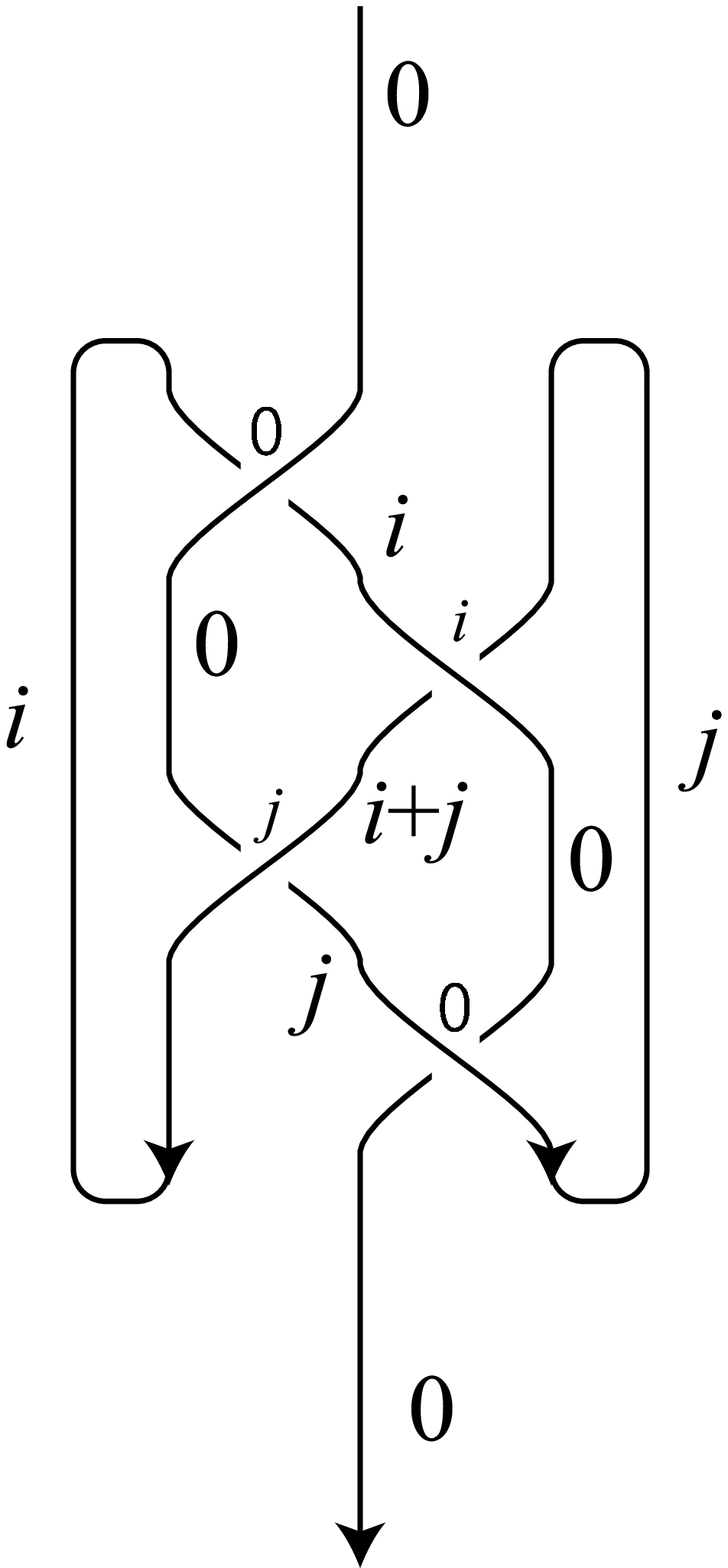}
  \caption{The figure-eight knot can also regarded as the closure of a $(1,1)$-tangle.}
  \label{fig:fig8_calc}
\end{figure}
In this case we need to put $\mu$ at each local minimum where the arc goes from left to right, and $\mu^{-1}$ at each local maximum where the arc goes from right to left.
See \cite[Theorem~3.6]{Kirby/Melvin:INVEM1991} for details.
\par
From Figure~\ref{fig:tangle}, we have
\begin{equation*}
\begin{split}
  &J_N(E;q)
  \\
  =&
  \sum_{\stackrel{0\le i\le N-1,0\le j\le N-1}{0\le i+j\le N-1}}
  R^{i,0}_{0,i}\,
  (R^{-1})^{i,j}_{i+j,0}\,
  R^{0,i+j}_{i,j}\,
  (R^  {-1})^{j,0}_{0,j}\,
  (\mu^{-1})^{i}_{i}\,
  \mu^{j}_{j}
  \\
  =&
  \sum_{\stackrel{0\le i\le N-1,0\le j\le N-1}{0\le i+j\le N-1}}
  (-1)^i
  \frac{\{i+j\}!\{N-1\}!}{\{i\}!\{j\}!\{N-1-i-j\}!}
  q^{-(N-1)i/2+(N-1)j/2-i^2/4+j^2/4-3i/4+3j/4}.
\end{split}
\end{equation*}
Putting $k:=i+j$, we have
\begin{equation*}
  J_N(E;q)
  =
  \sum_{k=0}^{N-1}
  \frac{\{N-1\}!}{\{N-1-k\}!}q^{k^2/4+Nk/2+k/4}
  \left(
    \sum_{i=0}^{k}(-1)^i
    \frac{\{k\}!}{\{i\}!\{k-i\}!}
    q^{-Ni-ik/2-i/2}
  \right).
\end{equation*}
Using the formula (see \cite[Lemma~3.2]{Murakami/Murakami:ACTAM12001})
\begin{equation*}
  \sum_{i=0}^{k}(-1)^iq^{li/2}\frac{\{k\}!}{\{i\}!\{k-i\}!}
  =
  \prod_{g=1}^{k}(1-q^{(l+k+1)/2-g}),
\end{equation*}
we have the following simple formula with only one summand, which is due to Habiro and L{\^e} (I learned this method from L{\^e}).
\begin{equation}\label{eq:fig8_single}
  J_N(E;q)
  =
  \frac{1}{\{N\}}
  \sum_{k=0}^{N-1}\frac{\{N+k\}!}{\{N-1-k\}!}.
\end{equation}
\section{Volume conjecture}\label{sec:Volume_Conjecture}
In this section we state the Volume Conjecture and then prove it for the figure-eight knot.
We also give supporting evidence for the conjecture.
\subsection{Statement of the Volume Conjecture}
In \cite{Kashaev:MODPLA95} Kashaev introduced a link invariant $\langle L\rangle_N\in\C$ for an integer $N$ greater than one and a link $L$ by using the quantum dilogarithm.
Then he observed in \cite{Kashaev:LETMP97} that the limit $2\pi\lim_{N\to\infty}\log|\langle K\rangle_N|/N$ is equal to the hyperbolic volume of the knot complement if a knot $K$ is hyperbolic.
Here a knot in the three-sphere $S^3$ is called hyperbolic if its complement possesses a complete hyperbolic structure with finite volume.
He also conjectured this would also hold for any hyperbolic knot.
\par
In \cite{Murakami/Murakami:ACTAM12001} J.~Murakami and I proved that Kashaev's invariant equals the $N$-dimensional colored Jones polynomial evaluated at the $N$-th root of unity, that is, $\langle L\rangle_N=J_N(L;\exp(2\pi\sqrt{-1}/N))$ and proposed that Kashaev's conjecture would hold for any knot by using the simplicial volume.
\begin{conjecture}[Volume Conjecture \cite{Kashaev:LETMP97,Murakami/Murakami:ACTAM12001}]\label{conj:VC}
The following equality would hold for any knot $K$.
\begin{equation*}
  2\pi\lim_{N\to\infty}\frac{\log|J_N(K;\exp(2\pi\sqrt{-1}/N))|}{N}
  =\Vol(S^3\setminus{K}).
\end{equation*}
\end{conjecture}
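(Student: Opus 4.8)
The plan is to attack the conjecture through an asymptotic (saddle-point) analysis of an explicit state-sum expression for $J_N\bigl(K;\exp(2\pi\sqrt{-1}/N)\bigr)$, following the pattern that already succeeds for the figure-eight knot. First I would fix an ideal triangulation of $S^3\setminus K$ (for hyperbolic $K$) and, by the same $R$-matrix bookkeeping used in \S\ref{subsec:calculation}, produce a finite multi-sum formula in which each summand is a product and quotient of quantum factorials $\{m\}!$ evaluated at $q=\exp(2\pi\sqrt{-1}/N)$. The key analytic input is the asymptotic expansion
\[
  \log\{m\}!
  =
  \frac{N}{2\pi\sqrt{-1}}\,\Li_2\!\bigl(e^{2\pi\sqrt{-1}\,m/N}\bigr)
  +O(\log N),
\]
which converts each summand into $\exp\bigl(\tfrac{N}{2\pi\sqrt{-1}}\,\Phi(\mathbf{z})\bigr)$ for a \emph{potential function} $\Phi$ assembled from dilogarithms, roughly one per tetrahedron, after the summation indices are rescaled to continuous variables $\mathbf{z}=(z_1,\dots,z_r)\in(\C^\ast)^r$.

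Second, I would replace the multi-sum by a multi-dimensional integral and apply the method of steepest descent, so that the dominant contribution as $N\to\infty$ comes from the critical points of $\Phi$, i.e.\ the solutions of $\partial\Phi/\partial z_j=0$. The crucial geometric step is to identify these critical-point equations with Thurston's gluing and completeness equations for the ideal triangulation, so that one distinguished solution $\mathbf{z}^{\mathrm{geo}}$ corresponds to the complete hyperbolic structure, and then to verify that $\Re\bigl(\tfrac{1}{2\pi\sqrt{-1}}\Phi(\mathbf{z}^{\mathrm{geo}})\bigr)=\Vol(S^3\setminus K)/(2\pi)$. This last equality is the classical statement that the imaginary part of a sum of dilogarithms evaluated at the shape parameters equals the hyperbolic volume, expressible through the Lobachevsky function.

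Third, for \emph{non-hyperbolic} knots I would argue separately. Torus knots are handled by a direct estimate showing that $J_N$ grows only polynomially in $N$, so the limit is $0=\Vol$; satellite knots and connected sums are then handled by combining such estimates with the hyperbolic case along the JSJ decomposition, using that the right-hand side is the simplicial (Gromov) volume, which is additive over the pieces of the decomposition.

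The hard part will be the final analytic control in the steepest-descent step, and it is precisely what keeps the conjecture open in general. One must prove that the geometric critical point is the one with the \emph{largest} value of $\Re\bigl(\Phi/(2\pi\sqrt{-1})\bigr)$ among all solutions, so that it dominates the asymptotics and no competing saddle overtakes it; that there is no destructive cancellation among the many terms; and that the contour of summation can genuinely be deformed to pass through $\mathbf{z}^{\mathrm{geo}}$ along a path of steepest descent. Establishing the existence of a well-behaved potential function for every triangulation, ruling out competing saddles \emph{uniformly} over all knots, and justifying the sum-to-integral approximation with rigorous error bounds are exactly the points at which no uniform argument is presently available, which is why the equality is still only conjectural beyond a handful of cases.
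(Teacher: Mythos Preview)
The statement is a \emph{conjecture}, and the paper does not claim to prove it; what it offers instead is a rigorous proof for the figure-eight knot (\S\ref{subsec:proof_fig8}) together with an explicitly non-rigorous heuristic for general knots (\S\ref{sec:VC_proof}). Your proposal is not a proof either, and you say so yourself: your outline follows exactly the scheme of \S\ref{sec:VC_proof}---express $J_N(K;\xi_N)$ as a state sum of quantum factorials, replace $\log\{m\}!$ by $\frac{N}{2\pi\sqrt{-1}}\Li_2(\xi_N^m)$, pass from sum to integral, apply steepest descent, and identify the critical-point equations with Thurston's gluing/completeness equations so that the critical value gives the volume via the dilogarithm/Lobachevsky formula. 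The paper's list of acknowledged gaps at the end of \S4.1 (choice of contour when turning the sum into an integral; selecting the correct critical point among many; deforming the contour through it) coincides with the obstacles you name in your last paragraph.

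One small addition in your write-up goes beyond what the paper sketches: you propose to treat non-hyperbolic knots by showing polynomial growth for torus knots and then propagating along the JSJ decomposition using additivity of simplicial volume. That is a reasonable strategy in spirit, and the torus-knot case is indeed known (Kashaev--Tirkkonen), but be aware that there is no general cabling or JSJ formula for $J_N$ that lets you separate the asymptotics of a satellite into those of its pieces; the cases on the paper's list (Whitehead doubles of torus knots, zero-volume links, etc.) are handled one family at a time, not by a uniform JSJ argument. So that step, too, is presently heuristic rather than a reduction.
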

To define the simplicial volume (or Gromov norm), we introduce the Jaco--Shalen--Johannson (JSJ) decomposition (or the torus decomposition) of a knot complement.
\begin{definition}[Jaco--Shalen--Johannson decomposition \cite{Jaco/Shalen:MEMAM79,Johannson:1979}]
Let $K$ be a knot.
Then its complement $S^3\setminus{K}$ can be uniquely decomposed into hyperbolic pieces and Seifert fibered pieces by a system of tori:
\begin{equation*}
  S^3\setminus{K}
  =
  \left(\bigsqcup H_i\right)\sqcup\left(\bigsqcup E_j\right)
\end{equation*}
with $H_i$ hyperbolic and $E_j$ Seifert-fibered.
\end{definition}
Then the simplicial volume of the knot complement is defined to be the sum of the hyperbolic volumes of the hyperbolic pieces.
\begin{definition}[Simplicial volume (Gromov norm) \cite{Gromov:INSHE82}]
If a knot complement $S^3\setminus{K}$ is decomposed as above, then its simplicial volume $\Vol(S^3\setminus{K})$ is defined as
\begin{equation*}
  \Vol(S^3\setminus{K})
  :=
  \sum_{H_i:\text{hyperbolic piece}}\text{Hyperbolic Volume of $H_i$}.
\end{equation*}
\end{definition}
\begin{example}
Let us consider the $(2,1)$-cable of the figure-eight knot as shown in figure~\ref{fig:2_1_fig_8}.
\begin{figure}[h]
  \includegraphics[scale=0.3]{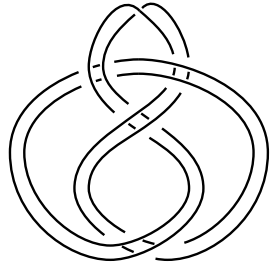}
  \caption{$(2,1)$-cable of the figure-eight knot}
  \label{fig:2_1_fig_8}
\end{figure}
Then its complement can be decomposed by a torus into two pieces (Figure~\ref{fig:2_1_fig_8_JSJ}), one hyperbolic and one Seifert fibered.
\begin{figure}[h]
  \includegraphics[scale=0.3]{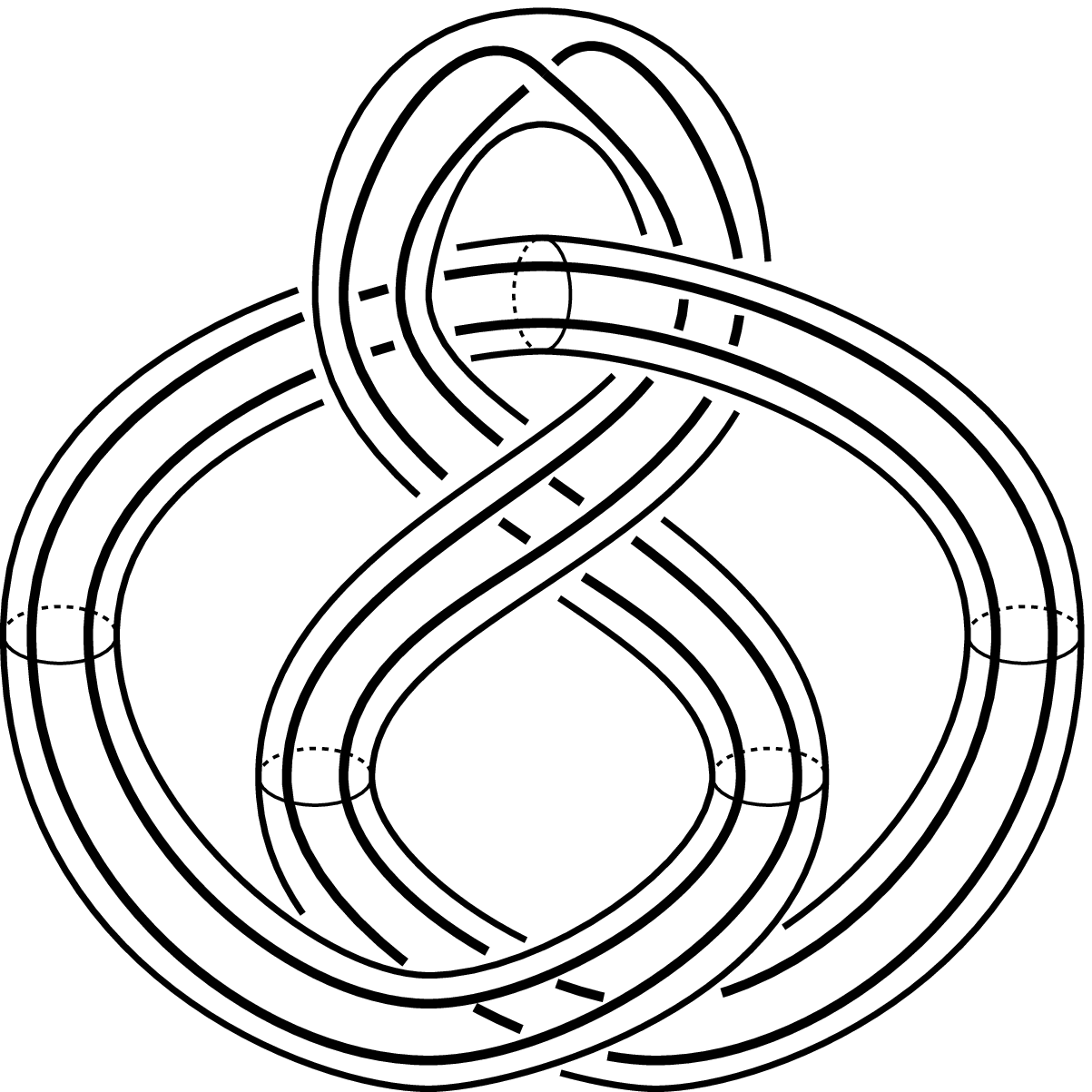}
  \quad\raisebox{15mm}{$=$}\quad
  $\underset{\text{\raisebox{-2mm}{hyperbolic}}}
    {\includegraphics[scale=0.3]{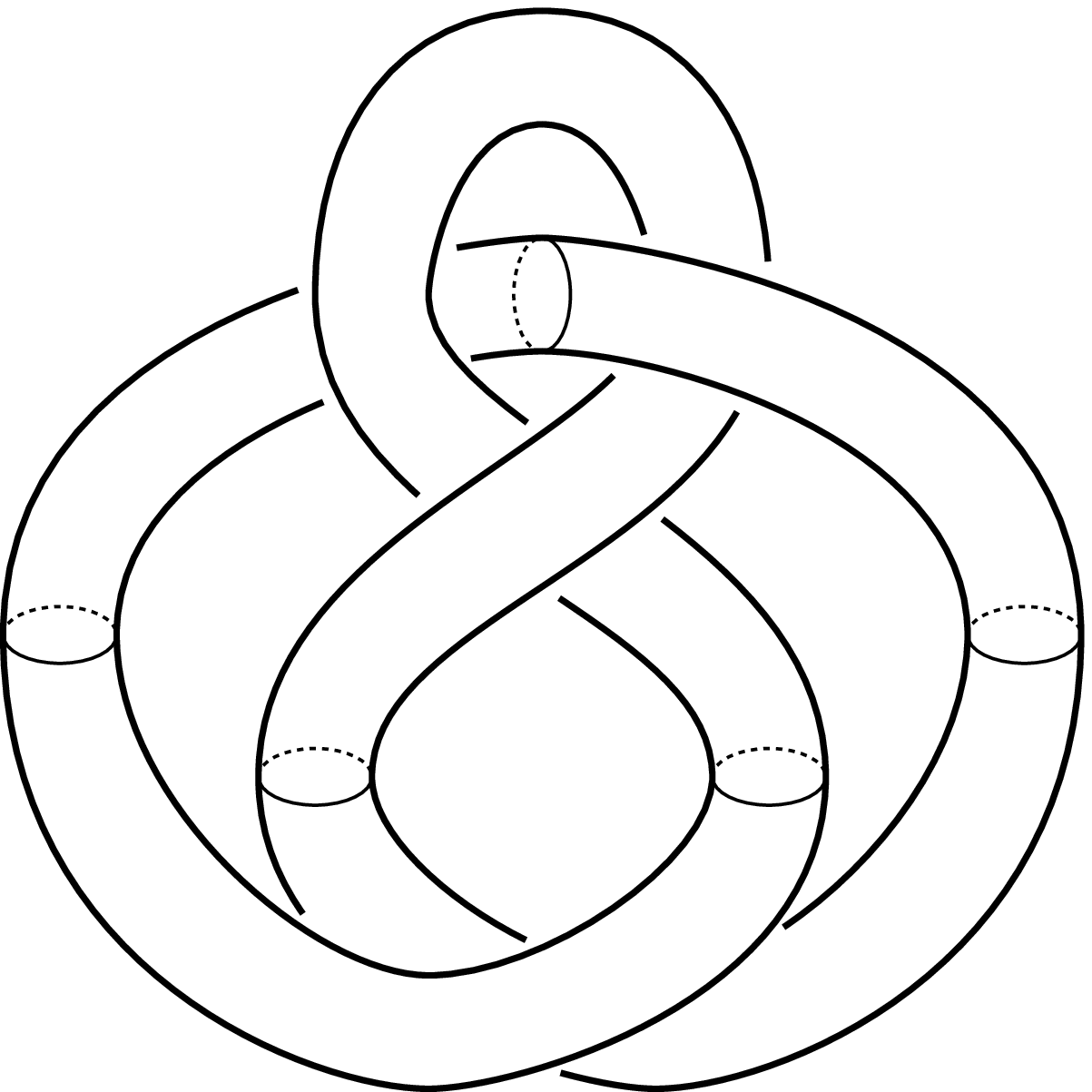}}$
  \quad\raisebox{15mm}{$\sqcup$}\quad
  \raisebox{1mm}{$\underset{\text{\raisebox{-3mm}{Seifert fibered}}}
    {\includegraphics[scale=0.3]{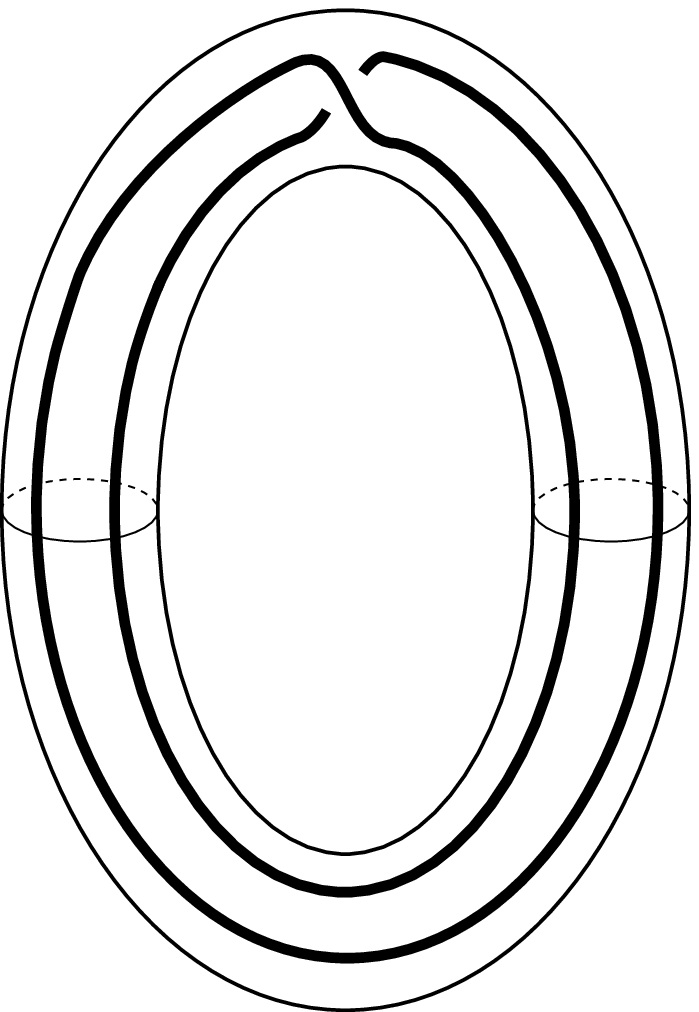}}$}
  \caption{The JSJ decomposition of the $(2,1)$-cable of the figure-eight knot}
  \label{fig:2_1_fig_8_JSJ}
\end{figure}
Therefore we have
\begin{equation*}
  \Vol
  \left(
    \raisebox{-14mm}{\includegraphics[scale=0.3]{complement_2_1_cable_small.eps}}
  \right)
  =
  \Vol
  \left(
    \raisebox{-14mm}{\includegraphics[scale=0.3]{complement_fig8_small.eps}}
  \right).
\end{equation*}
\end{example}
\subsection{Proof of the Volume Conjecture for the figure-eight knot}
\label{subsec:proof_fig8}
We give a proof of the Volume Conjecture for the figure-eight knot due to T.~Ekholm.
\subsubsection{Calculation of the limit}
We use the formula \eqref{eq:fig8_single} of the colored Jones polynomial for the figure-eight knot $E$  due to Habiro and L\^e.
(See \cite{Habiro:SURIK2000,Masbaum:ALGGT12003} for Habiro's method.)
By a simple calculation using it, we have
\begin{equation}\label{eq:Jones_fig8}
  J_N\left(E;q\right)
  =
  \sum_{j=0}^{N-1}
  \prod_{k=1}^{j}
  \left(q^{(N-k)/2}-q^{-(N-k)/2}\right)
  \left(q^{(N+k)/2}-q^{-(N+k)/2}\right).
\end{equation}
Replacing $q$ with $\exp(2\pi\sqrt{-1}/N)$, we have
\begin{equation*}
  J_N\left(E;\exp(2\pi\sqrt{-1}/N)\right)
  =
  \sum_{j=0}^{N-1}\prod_{k=1}^{j}f(N;k)
\end{equation*}
where we put $f(N;k):=4\sin^2(k\pi/N)$.
The graph of $f(N;k)$ is depicted in Figure~\ref{fig:graph_f}.
\begin{figure}[h]
  \includegraphics[scale=0.8]{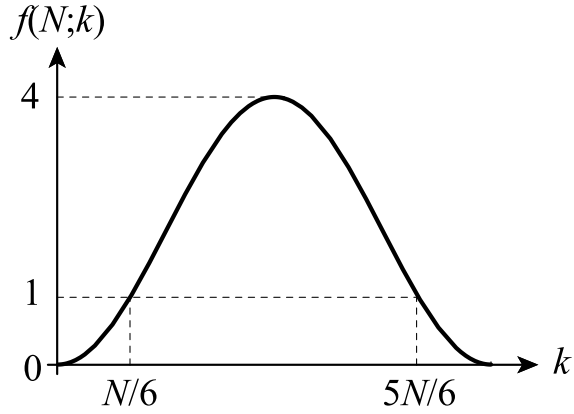}
  \caption{Graph of $f(N;k)$}
  \label{fig:graph_f}
\end{figure}
\par
Put $g(N;j):=\prod_{k=1}^{j}f(N;k)$ so that $J_N\left(E;\exp(2\pi\sqrt{-1}/N)\right)=\sum_{j=0}^{N-1}g(N;j)$.
Then $g(N;j)$ decreases when $0<j<N/6$ and $5N/6<j$, and increases when $N/6<j<5N/6$.
Therefore $g(N;j)$ takes its maximum at $j=5N/6$.
(To be precise we need to take the integer part of $5N/6$.)
See Table~\ref{tbl:f_g}.
\begin{table}[h]
\begin{tabular}{|c|c|c|c|c|c|c|c|}
  \hline
  $j$     &$0$& $\cdots$ &$N/6$& $\cdots$ &$5N/6$& $ \cdots$ &$1$\\
  \hline
    $f(N;k)$&   &   $<1$   & $1$ &   $>1$   & $1$   &  $<1$    &   \\
  \hline
    $g(N;j)$&$1$&$\searrow$&     &$\nearrow$&maximum&$\searrow$&   \\
  \hline
\end{tabular}
\medskip
\caption{table of $f(N;k)$ and $g(N;j)$}
\label{tbl:f_g}
\end{table}
Since there are $N$ positive terms in $J_N\left(E;\exp(2\pi\sqrt{-1}/N)\right)=\sum_{j=0}^{N-1}g(N;j)$ and $g(N;5N/6)$ is the maximum of these terms, we have
\begin{equation*}
  g(N;5N/6)
  \le
  J_N\left(E;\exp(2\pi\sqrt{-1}/N)\right)
  \le
  N\times g(N;5N/6).
\end{equation*}
Noting that each side is positive, we take their logarithms and divide them by $N$.
\begin{equation*}
  \frac{\log{g(N;5N/6)}}{N}
  \le
  \frac{\log J_N\left(E;\exp(2\pi\sqrt{-1}/N)\right)}{N}
  \le
  \frac{\log{N}}{N}+
  \frac{\log{g(N;5N/6)}}{N}.
\end{equation*}
Since $\log_{N\to\infty}(\log{N})/N=0$, we have
\begin{equation*}
  \lim_{N\to\infty}\frac{\log{g(N;5N/6)}}{N}
  \le
  \lim_{N\to\infty}
  \frac{\log J_N\left(E;\exp(2\pi\sqrt{-1}/N)\right)}{N}
  \le
  \lim_{N\to\infty}\frac{\log{g(N;5N/6)}}{N}.
\end{equation*}
Therefore we have
\begin{equation*}
  \lim_{N\to\infty}
  \frac{\log{J_N\left(E;\exp(2\pi\sqrt{-1}/N)\right)}}{N}
  =
  \lim_{N\to\infty}\frac{\log{g(N;5N/6)}}{N}.
\end{equation*}
\par
We can calculate the limit $\lim_{N\to\infty}\bigl(\log{g(N;5N/6)}\bigr)/N$ by integration.
Since $g(N;j)=\prod_{k=1}^{j}f(N;k)$, we have
\begin{equation}\label{eq:Jones_integral}
\begin{split}
  \lim_{N\to\infty}\frac{\log{g(N;5N/6)}}{N}
  &=
  \lim_{N\to\infty}
  \frac{1}{N}
  \sum_{k=1}^{5N/6}\log{f(N;k)}
  \\
  &=
  2
  \lim_{N\to\infty}
  \frac{1}{N}
  \sum_{k=1}^{5N/6}\log\bigl(2\sin(k\pi/N)\bigr)
  \\
  &=
  \frac{2}{\pi}
  \int_{0}^{5\pi/6}
  \log\bigl(2\sin{x}\bigr)\,dx.
\end{split}
\end{equation}
What does this mean?
I will explain a geometric interpretation of this integral.
\subsubsection{Lobachevsky function}
The function defined by the integral in \eqref{eq:Jones_integral} is known as the Lobachevsky function.
More precisely, we define the Lobachevsky function $\Lambda(\theta)$ as
\begin{equation*}
  \Lambda(\theta)
  :=
  -\int_{0}^{\theta}\log|2\sin{x}|\,dx
\end{equation*}
for $\theta\in\R$.
By using this function, we can express the limit of the colored Jones polynomial as
\begin{equation}\label{eq:Jones_Lobachevsky}
  \lim_{N\to\infty}
  \frac{\log{J_N\left(E;\exp(2\pi\sqrt{-1}/N)\right)}}{N}
  =
  -\frac{2}{\pi}\Lambda(5\pi/6)
\end{equation}
We show some properties of the Lobachevsky function (see for example \cite{Milnor:BULAM382}).
\begin{lemma}
The Lobachevsky function satisfies the following two properties.
\begin{enumerate}
\item The Lobachevsky function is an odd function and has period $\pi$.
\item We have $\Lambda(2\theta)=2\Lambda(\theta)+2\Lambda(\theta+\pi/2)$.
In general we have $\Lambda(n\theta)=n\sum_{k=1}^{n-1}\Lambda(\theta+k\pi/n)$.
\end{enumerate}
\end{lemma}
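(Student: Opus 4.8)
The plan is to reduce both statements to two classical trigonometric identities: the reflection $|2\sin(-x)|=|2\sin x|$ and the product formula $\prod_{k=0}^{n-1}2\sin(x+k\pi/n)=2\sin(nx)$, together with the observation that $\Lambda$ is a well-defined continuous function on $\R$ because $\log|2\sin x|$ has only integrable (logarithmic) singularities at integer multiples of $\pi$. All the substitutions below are legitimate for that reason, so the whole argument is bookkeeping once these facts are in place.

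For (1), the oddness is immediate: substituting $x\mapsto -x$ in the defining integral and using $|\sin(-x)|=|\sin x|$ gives $\Lambda(-\theta)=-\Lambda(\theta)$. For the period, it is enough to prove $\Lambda(\theta+\pi)=\Lambda(\theta)$, i.e. $\int_{\theta}^{\theta+\pi}\log|2\sin x|\,dx=0$; since the integrand has period $\pi$ this integral equals $\int_{0}^{\pi}\log|2\sin x|\,dx=-\Lambda(\pi)$, so (1) comes down to the single evaluation $\Lambda(\pi)=0$. I would get this for free from the duplication computation below: before using any normalization one finds, via $x=2t$ and $2\sin 2t=(2\sin t)(2\sin(t+\pi/2))$, that $\Lambda(2\theta)=2\Lambda(\theta)+2\Lambda(\theta+\pi/2)-2\Lambda(\pi/2)$ for all $\theta$, while the substitution $x\mapsto\pi-x$ on $[0,\pi]$ gives $\Lambda(\pi)=2\Lambda(\pi/2)$; taking $\theta=\pi/2$ in the former identity then forces $\Lambda(\pi)=2\Lambda(\pi)$, hence $\Lambda(\pi)=0$ and $\Lambda(\pi/2)=0$. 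With $\Lambda(\pi/2)=0$ in hand the duplication identity becomes exactly $\Lambda(2\theta)=2\Lambda(\theta)+2\Lambda(\theta+\pi/2)$, the first assertion of (2).

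For the general distribution relation I would take the logarithm of the product formula, $\log|2\sin(nt)|=\sum_{k=0}^{n-1}\log|2\sin(t+k\pi/n)|$, substitute $x=nt$ in $\Lambda(n\theta)=-\int_{0}^{n\theta}\log|2\sin x|\,dx$, and integrate term by term. After the shift $s=t+k\pi/n$ the $k$-th summand contributes $-n(\Lambda(\theta+k\pi/n)-\Lambda(k\pi/n))$, so $\Lambda(n\theta)=n\sum_{k=0}^{n-1}\Lambda(\theta+k\pi/n)-n\sum_{k=0}^{n-1}\Lambda(k\pi/n)$. The final sum vanishes: pairing $k$ with $n-k$ and applying (1) gives $\Lambda((n-k)\pi/n)=\Lambda(-k\pi/n)=-\Lambda(k\pi/n)$, and the $k=0$ term is $\Lambda(0)=0$. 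This yields $\Lambda(n\theta)=n\sum_{k=0}^{n-1}\Lambda(\theta+k\pi/n)$, with the $k=0$ term equal to $\Lambda(\theta)$, which is the stated formula.

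The only real subtlety is the order of the argument: the clean duplication identity in (2) needs $\Lambda(\pi/2)=0$, which I am extracting from (1), whereas (1) needs $\Lambda(\pi)=0$, which I am in turn extracting from the \emph{uncleaned} duplication identity. So the argument must be run as: derive the duplication identity with its error term $-2\Lambda(\pi/2)$; use it (and the reflection $x\mapsto\pi-x$) to get $\Lambda(\pi)=0$; deduce all of (1); then return to kill the error terms in the general relation. There is no deep obstacle; the one external ingredient I would quote rather than reprove is the product formula $\prod_{k=0}^{n-1}2\sin(x+k\pi/n)=2\sin(nx)$, itself a standard consequence of factoring $z^{n}-1$.
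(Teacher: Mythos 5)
Your proof is correct, and at its core it follows the same route as the paper --- integrate the logarithm of the multiple-angle product formula for the sine --- but you are considerably more careful than the paper about the one place where the real content lies, namely the constants of integration. The paper's proof of (2) stops at the pointwise identity $\log|2\sin(2x)|=\log|2\sin x|+\log|2\sin(x+\pi/2)|$ and declares the proof complete; integrating that identity from $0$ to $\theta$ only gives $\Lambda(2\theta)=2\Lambda(\theta)+2\Lambda(\theta+\pi/2)-2\Lambda(\pi/2)$, and the paper never addresses why $\Lambda(\pi/2)=0$. Likewise its proof of (1) appeals only to ``the periodicity of the sine function,'' which yields $\Lambda(\theta+\pi)-\Lambda(\theta)=\Lambda(\pi)$ but not the needed vanishing $\Lambda(\pi)=0$. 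Your bootstrapping --- derive the duplication identity with its error term, combine it with the reflection $\Lambda(\pi)=2\Lambda(\pi/2)$ to force $\Lambda(\pi)=\Lambda(\pi/2)=0$, then deduce all of (1) and clean up the general distribution relation --- supplies exactly what the paper leaves to the reader, and killing $\sum_{k}\Lambda(k\pi/n)$ by pairing $k$ with $n-k$ via (1) is the right way to finish the general case. Two small remarks: the sentence saying the $k$-th summand ``contributes $-n(\Lambda(\theta+k\pi/n)-\Lambda(k\pi/n))$'' has the sign reversed, but the displayed conclusion immediately after it is correct, so this is only a typo; and your final formula $\Lambda(n\theta)=n\sum_{k=0}^{n-1}\Lambda(\theta+k\pi/n)$ is the correct one --- the paper's statement with the sum starting at $k=1$ omits the $\Lambda(\theta)$ term and is inconsistent with its own $n=2$ case, so you were right to include $k=0$.
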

\begin{proof}
The first property is easily shown by the periodicity of the sine function.
\par
To prove the second, we use the double angle formula of the sine function: $\sin(2x)=2\sin{x}\cos{x}$.
We have
\begin{equation*}
  \log|2\sin(2x)|
  =
  \log|2\sin{x}|
  +
  \log|2\cos{x}|
  =
  \log|2\sin x|
  +
  \log|2\sin(x+\pi/2)|,
\end{equation*}
completing the proof.
\end{proof}
From (1) we have
\begin{equation*}
  \Lambda(5\pi/6)
  =
  \Lambda(-\pi/6)
  =
  -\Lambda(\pi/6).
\end{equation*}
From (2) and (1) we also have
\begin{equation*}
  \Lambda(\pi/3)
  =
  2\Lambda(\pi/6)+2\Lambda(2\pi/3)
  =
  2\Lambda(\pi/6)-2\Lambda(\pi/3).
\end{equation*}
Therefore we have
\begin{equation*}
  \Lambda(5\pi/6)=-\frac{3}{2}\Lambda(\pi/3).
\end{equation*}
\par
Returning to the limit of the Jones polynomial \eqref{eq:Jones_Lobachevsky}, we conclude that
\begin{equation*}
  2\pi\lim_{N\to\infty}
  \frac{\log{J_N\left(E;\exp(2\pi\sqrt{-1}/N)\right)}}{N}
  =
  6\Lambda(\pi/3).
\end{equation*}
\par
Next we show how the Lobachevsky function is related to hyperbolic geometry.
\subsubsection{Hyperbolic geometry}
\label{subsubsec:geometry}
It is well known that the complement of the figure-eight knot can be decomposed into two ideal hyperbolic regular tetrahedra.
\begin{theorem}[W.~Thurston \cite{Thurston:GT3M}]
The complement of the figure-eight knot can be obtained by gluing two ideal hyperbolic regular tetrahedra.
\end{theorem}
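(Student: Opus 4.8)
The plan is to realize $S^3\setminus E$ by an explicit ideal triangulation with exactly two tetrahedra and then to put a hyperbolic metric on that triangulation. First I would produce the combinatorial decomposition. Starting from the standard alternating diagram of the figure-eight knot (the closure of $\sigma_1\sigma_2^{-1}\sigma_1\sigma_2^{-1}$), one collapses the complementary regions of the diagram towards ideal points and thickens the crossings; a standard procedure for alternating knots then presents $S^3\setminus E$ as a single ideal octahedron with a face-pairing, and splitting the octahedron and cancelling the two degenerate tetrahedra incident to the removed "poles" leaves two ideal tetrahedra $\Delta_1,\Delta_2$ together with an explicit gluing of their faces in pairs. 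The combinatorial heart of this step is to check that this face-pairing really reconstructs the figure-eight complement: that the eight tetrahedron-edges fall into exactly two identification classes, that the link of the single ideal vertex is a torus, and that the meridian and longitude of $E$ can be tracked as explicit curves on that torus.

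Next I would look for a hyperbolic structure compatible with the triangulation. Assign to $\Delta_1,\Delta_2$ shape parameters $z,w$ in the upper half-plane, so that each $\Delta_i$ is an ideal tetrahedron in $\H^3$ with the prescribed modulus, the three edge parameters of a tetrahedron of modulus $\zeta$ being $\zeta,\ 1/(1-\zeta),\ (\zeta-1)/\zeta$. Thurston's consistency conditions then require, around each of the two edge classes, that the product of the relevant edge parameters be $1$ and that the corresponding dihedral angles sum to $2\pi$; writing this out gives one complex gluing equation and its partner, which together with the completeness condition for the cusp (triviality of the derivative of the meridian holonomy) force $z=w=\exp(\pi\sqrt{-1}/3)$. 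Hence both pieces are regular ideal tetrahedra, with strictly positive imaginary part, so they are genuinely nondegenerate.

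Finally, by Thurston's gluing theorem the fact that the edge and completeness equations are solved by a point of the upper half-plane means the developing map produces a complete, finite-volume hyperbolic metric on $S^3\setminus E$, assembled precisely from the two regular ideal tetrahedra $\Delta_1,\Delta_2$; Mostow rigidity then identifies it with the unique complete structure on the complement. As a bonus, the volume of a regular ideal tetrahedron is $3\Lambda(\pi/3)$, so $\Vol(S^3\setminus E)=6\Lambda(\pi/3)$, which is exactly the limit of $\bigl(\log J_N(E;\exp(2\pi\sqrt{-1}/N))\bigr)/N$ computed in \eqref{eq:Jones_Lobachevsky}; the two lines of argument meet here.

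The step I expect to be the main obstacle is the first one: passing from the knot diagram to the two-tetrahedron ideal triangulation and rigorously verifying that the face identifications yield $S^3\setminus E$ itself — including correctly locating the peripheral curves, which one needs for the completeness equation. Once the triangulation is fixed, solving the gluing equations is a short explicit computation and the appeal to Thurston's theorem and Mostow rigidity is routine.
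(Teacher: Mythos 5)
Your proposal is correct and, at its core, follows the same route as the paper: realize $S^3\setminus E$ as two ideal tetrahedra glued along their faces and then show that taking both to be regular gives a genuine (complete) hyperbolic structure. The differences are in emphasis and completeness. The paper simply presents the two-truncated-tetrahedra decomposition as given (Figure~\ref{fig:fig8_tetra}, quoting Thurston and \cite{Murakami:FUNDM2004}), observes that each of the two edge classes is made of six tetrahedron edges, and verifies that the regular shape $\Delta(\pi/3,\pi/3,\pi/3)$ makes the angle sum $2\pi$ around each edge; it does not, at that point, discuss the completeness condition at the cusp (that only appears later, in the deformation section, as the remark that \eqref{eq:hyperbolicity} alone gives hyperbolicity and that completeness requires the extra condition on the boundary parallelogram of Figure~\ref{fig:torus}). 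You instead \emph{derive} the triangulation from the alternating diagram via the crossing octahedron, write the gluing equations in shape parameters $z,w$, and impose the cusp (completeness) equation to \emph{force} $z=w=\exp(\pi\sqrt{-1}/3)$, then invoke Thurston's gluing theorem and Mostow rigidity. What your version buys is rigor on exactly the two points the paper elides: that the angle-sum condition alone has a one-parameter family of solutions, so the completeness equation is genuinely needed to single out the regular shape; and that the face-pairing really reconstructs the figure-eight complement with the correct peripheral curves. What the paper's version buys is brevity: given the decomposition, the sufficiency check $6\times(\pi/3)=2\pi$ is a one-line verification, which is all that is needed for the volume computation $2\Vol(\Delta(\pi/3,\pi/3,\pi/3))=6\Lambda(\pi/3)$ that the section is driving toward. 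You correctly identify the combinatorial step (diagram $\to$ two-tetrahedron ideal triangulation with tracked meridian and longitude) as the main labor; that is precisely the part the paper outsources to the figure and the references.
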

I will explain what is an ideal hyperbolic regular tetrahedron later.
Topologically, the theorem states that the complement of the figure-eight knot can be obtained by gluing two truncated tetrahedra as in Figure~\ref{fig:fig8_tetra} (see also \cite{Murakami:FUNDM2004}).
In Figure~\ref{fig:fig8_tetra} we identify $A$ with $A'$, $B$ with $B'$, $C$ with $C'$ and $D$ with $D'$.
Note that edges with single arrows and edges with double arrows are also identified respectively.
\begin{figure}[h]
  \includegraphics[scale=0.25]{complement_fig8_small.eps}
  \quad\raisebox{13mm}{$=$}\quad
  \includegraphics[scale=0.4]{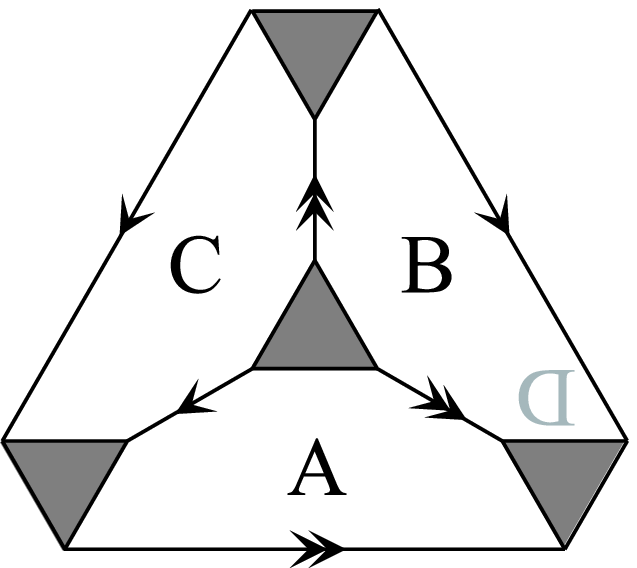}
  \quad\raisebox{13mm}{$\underset{\substack{A=A',B=B',\\C=C',D=D'}}{\cup}$}\quad
  \includegraphics[scale=0.4]{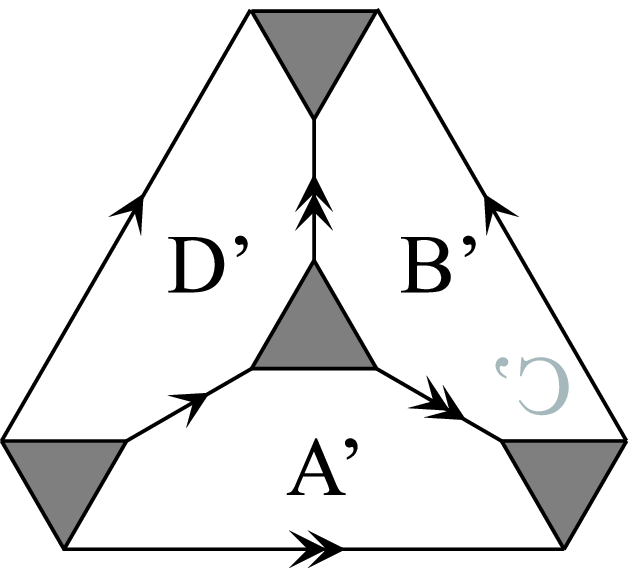}
  \caption{The complement of the figure-eight knot is decomposed into two ideal hyperbolic regular tetrahedra.
  Shadowed triangles make the boundary of the knot complement.}
  \label{fig:fig8_tetra}
\end{figure}
\par
Here I give a short introduction to hyperbolic geometry.
\par
First consider the upper half space $\{(x,y,z)\in\R^3\mid z>0\}$ with hyperbolic metric $ds:=\sqrt{dx^2+dy^2+dz^2}/z$ and denote it by $\H^3$.
It is known that a geodesic line in $\H^3$ is a semicircle or a straight line perpendicular to the $xy$-plane, and that a geodesic plane is a hemisphere or a flat plane perpendicular to the $xy$-plane.
\par
An ideal hyperbolic tetrahedron is a tetrahedron in $\H^3$ with geodesic faces with four vertices at infinity, that is, on the $xy$-plain or at the point at infinity $\infty$.
By isometry we may assume that one vertex is $\infty$ and the other three are on the $xy$-plane.
So its faces consist of three perpendicular planes and a hemisphere as shown in Figure~\ref{fig:tetrahedron}.
\begin{figure}[h]
  \includegraphics[scale=0.7]{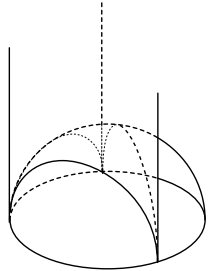}
  \caption{An ideal hyperbolic tetrahedron is the part above the hemisphere surrounded by three perpendicular planes.}
  \label{fig:tetrahedron}
\end{figure}
If we see the tetrahedron from the top, it is a (Euclidean) triangle with angles $\alpha$, $\beta$, and $\gamma$.
\begin{figure}[h]
  \includegraphics[scale=0.7]{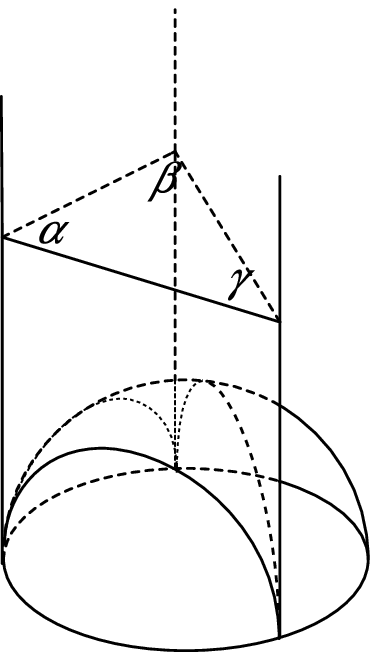}
  \hspace{20mm}
  \raisebox{7mm}{\includegraphics[scale=0.5]{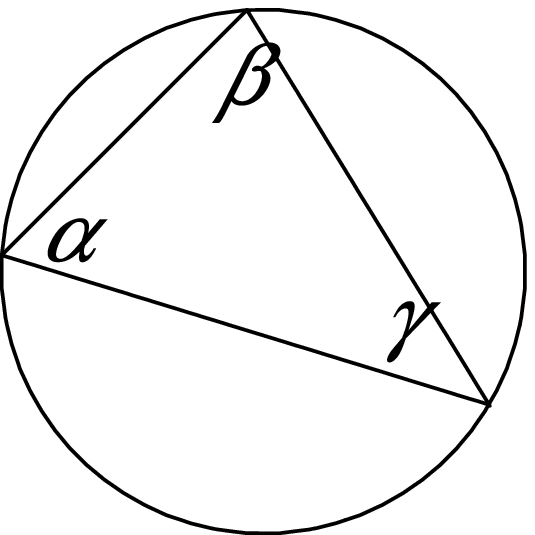}}
  \caption{A top view (right) of the ideal hyperbolic tetrahedron (left)}
  \label{fig:top_view}
\end{figure}
It is known that an ideal hyperbolic tetrahedron is defined (up to isometry) by the similarity class of this triangle.
Therefore we can parametrize an ideal hyperbolic tetrahedron by a triple of positive numbers $(\alpha,\beta,\gamma)$ with $\alpha+\beta+\gamma=\pi$.
We denote it by $\Delta(\alpha,\beta,\gamma)$.
\par
The hyperbolic volume $\Vol(\Delta(\alpha,\beta,\gamma))$ can be expressed by using the Lobachevsky function $\Lambda(\theta)$.
In fact it can be shown that
\begin{equation*}
  \Vol(\Delta(\alpha,\beta,\gamma))
  =
  \Lambda(\alpha)+\Lambda(\beta)+\Lambda(\gamma).
\end{equation*}
For a proof, see for example \cite[Chapter~7]{Thurston:GT3M}.
\par
Now we return to the decomposition of the figure-eight knot complement.
Figure~\ref{fig:fig8_tetra} shows that after identification we have two edges, edge with single arrow and edge with double arrow, each of them is obtained by identifying six edges.
So if the ideal hyperbolic tetrahedra we are using are regular, that is, isometric to $\Delta(\pi/3,\pi/3,\pi/3)$ then the sum of dihedral angles around each edge becomes $2\pi$.
This means that if we use two ideal hyperbolic regular tetrahedra, our gluing is geometric, that is, the complement of the figure-eight knot is isometric to the union of two copies of $\Delta(\pi/3,\pi/3,\pi/3)$.
In particular its volume equals $2\Vol(\Delta(\pi/3,\pi/3,\pi/3))=6\Lambda(\pi/3)$.
\par
Thus we have proved
\begin{equation*}
  2\pi\lim_{N\to\infty}
  \frac{\log{J_N\left(E;\exp(2\pi\sqrt{-1}/N)\right)}}{N}
  =
  6\Lambda(\pi/3)
  =
  \Vol\left(S^3\setminus{E}\right),
\end{equation*}
which is the statement of the Volume Conjecture for the figure-eight knot.
\subsection{Knots and links for which the Volume Conjecture is proved}
As far as I know the Volume Conjecture is proved for
\begin{enumerate}
\item figure-eight knot by Ekholm,
\item $5_2$ knot by Kashaev and Yokota,
\item Whitehead doubles of torus knots by Zheng \cite{Zheng:CHIAM22007},
\item torus knots by Kashaev and Tirkkonen \cite{Kashaev/Tirkkonen:ZAPNS2000},
\item torus links of type $(2,2m)$ by Hikami \cite{Hikami:COMMP2004},
\item knots and links with volume zero by van der Veen \cite{van_der_veen:2008},
\item Borromean rings by Garoufalidis and L{\^e} \cite{Garoufalidis/Le:2005},
\item twisted Whitehead links by Zheng \cite{Zheng:CHIAM22007},
\item Whitehead chains by van der Veen \cite{van_der_Veen:ACTMV2009},
\item a satellite link around the figure-eight knot with pattern the Whitehead link by Yamazaki and Yokota \cite{Yamazaki/Yokota}.
\end{enumerate}
Note that (1) and (2) are for hyperbolic knots, (3) is for a knot whose JSJ decomposition consists of a hyperbolic piece and a Seifert fibered piece, (4)--(6) are for knots and links only with Seifert pieces, (7)--(9) are for hyperbolic links, and (10) is for a link whose JSJ decomposition consists of a hyperbolic piece and a Seifert fibered piece.
\section{Supporting evidence for the Volume Conjecture}
\label{sec:VC_proof}
The Volume Conjecture is proved only for several knots and links but I think it is true possibly with some modification; for example we may need to replace the limit with the limit superior (see \cite[Conjecture~2]{van_der_Veen:ACTMV2009}).
In this section I will explain why I think it is true.
\begin{remark}[Caution!]
Descriptions in this section are {\em not rigorous}.
\end{remark}
\subsection{Approximation of the colored Jones polynomial}
Put $\xi_N:=\exp(2\pi\sqrt{-1}/N)$ and I will give an interpretation of the $R$-matrix used to define the colored Jones polynomial.
From \eqref{eq:R} and \eqref{eq:R_inverse} we have
\begin{equation}\label{eq:R_N}
\begin{split}
  R^{ij}_{kl}\bigr|_{q:=\xi_N}
  &=
  \sum_{m}
  \text{(a power of $\xi_N$)}\times
  \delta_{l,i+m}\delta_{k,j-m}\,
  \frac{\{l\}!\{N-1-k\}!}{\{i\}!\{m\}!\{N-1-j\}!},
  \\
  (R^{-1})^{ij}_{kl}\bigr|_{q:=\xi_q}
  &=
  \sum_{m}
  \text{(a power of $\xi_N$)}\times
  \delta_{l,i-m}\delta_{k,j+m}\,
  \frac{\pm\{k\}!\{N-1-l\}!}{\{j\}!\{m\}!\{N-1-i\}!}.
\end{split}
\end{equation}
Since
\begin{equation*}
  \{k\}!\bigr|_{q:=\xi_N}
  =
  \left(2\sqrt{-1}\right)^{k}
  \sin(\pi/N)
  \sin(2\pi/N)
  \cdots
  \sin(k\pi/N),
\end{equation*}
we have
\begin{equation*}
\begin{split}
  &
  \{k\}!\{N-k-1\}!\bigr|_{q:=\xi_N}
  \\
  =&
  \left(2\sqrt{-1}\right)^{N-1}
  \sin(\pi/N)\sin(2\pi/N)\cdots\sin(k\pi/N)
  \\
  &\times
  \sin\bigl((N-(k+1))\pi/N\bigr)
  \sin\bigl((N-(k+2))\pi/N\bigr)
  \sin\bigl((N-(N-1))\pi/N\bigr)
  \\
  =&
  (-1)^{N-k-1}\left(\sqrt{-1}\right)^{N-1}
  2^{N-1}
  \sin(\pi/N)\sin(2\pi/N)\cdots\sin((N-1)\pi/N)
  \\
  =&
  \varepsilon N
\end{split}
\end{equation*}
for $\varepsilon\in\{1,-1,\sqrt{-1},-\sqrt{-1}\}$.
For the last equality, see for example \cite{Murakami/Murakami:ACTAM12001}.
So if we put
\begin{align*}
  (\xi_N)_{k^+}&:=(1-\xi_N)\cdots(1-\xi^k_N),
  \\
  (\xi_N)_{k^{-}}&:=(1-\xi_N)\cdots(1-\xi^{N-1-k}_N),
\end{align*}
we have
\begin{align*}
  \{k\}!\bigr|_{q:=\xi_N}
  &=
  \text{(a power of $\xi_N$)}\times(\xi_N)_{k^+},
  \\
  \{N-1-k\}!\bigr|_{q:=\xi_N}
  &=
  \text{(a power of $\xi_N$)}\times(\xi_N)_{k^-}
  \\
  \intertext{and}
  (\xi_N)_{k^+}(\xi_N)_{k^-}
  &=
  \text{(a power of $\xi_N$)}\times N.
\end{align*}
\par
Therefore from \eqref{eq:R_N} the $R$-matrix and its inverse can be written as
\begin{align*}
  R^{ij}_{kl}\bigr|_{q:=\xi_N}
  &=
  \sum_{m}
  \delta_{l,i+m}\delta_{k,j-m}\,
  \frac{\text{(a power of $\xi_N$)}\times N^2}
       {(\xi_N)_{m^+}(\xi_N)_{i^+}(\xi_N)_{k^+}
                       (\xi_N)_{j^-}(\xi_N)_{l^-}},
  \\
  (R^{-1})^{ij}_{kl}\bigr|_{q:=\xi_N}
  &=
  \sum_{m}
  \delta_{l,i-m}\delta_{k,j+m}\,
  \frac{\text{(a power of $\xi_N$)}\times N^{2}}
       {(\xi_N)_{m^+}(\xi_N)_{i^-}(\xi_N)_{k^-}
                       (\xi_N)_{j^+}(\xi_N)_{l^+}}.
\end{align*}
\par
If we are given a knot $K$, we express it as a closed braid and calculate the colored Jones polynomial as described in \S\ref{subsec:calculation}.
Then we have
\begin{equation}\label{eq:Jones_R}
\begin{split}
  J_N(K;\xi_N)
  &=
  \sum_{\text{labelings}}
  \left(
    \prod_{\text{crossings}}
    (R^{\pm1})^{ij}_{kl}
  \right)
  \\
  &=
  \sum_{\text{labelings}}
  \left(
    \prod_{\text{crossings}}
    \frac{\text{(a power of $\xi_N$)}\times N^{2}}
         {(\xi_N)_{m^+}(\xi_N)_{i^{\pm}}(\xi_N)_{k^{\pm}}
                         (\xi_N)_{j^{\mp}}(\xi_N)_{l^{\mp}}}
  \right),
\end{split}
\end{equation}
where the summation is over all the labelings with $\{0,1,\dots,N-1\}$ corresponding to the basis $\{e_0,e_1,\dots,e_{N-1}\}$ and for a fixed labeling the product is over all the crossings, each of which corresponds to an entry $R^{ij}_{kl}$ (or $(R^{-1})^{i,j}_{kl}$, respectively), determined by the four labeled arcs around the vertex, of the $R$-matrix (or its inverse, respectively).
\par
We will approximate $(\xi_N)_{k^{+}}$ for large $N$.
By taking the logarithm, we have
\begin{equation*}
\begin{split}
  \log(\xi_N)_{k^{+}}
  &=
  \sum_{j=1}^{k}
  \log(1-\xi^j_N)
  \\
  &=
  \sum_{j=1}^{k}
  \log\bigl(1-\exp(2\pi\sqrt{-1}j/N)\bigr).
\end{split}
\end{equation*}
Putting $x:=j/N$, we may replace the summation with the following integral for large $N$ (this is {\em not rigorous!}).
\begin{equation*}
\begin{split}
  \log(\xi_N)_{k^{+}}
  &\underset{N\to\infty}{\approx}
  N\int_{0}^{k/N}\log\bigl(1-\exp(2\pi\sqrt{-1}x)\bigr)\,dx
  \\
  &=
  \frac{N}{2\pi\sqrt{-1}}
  \int_{1}^{\exp(2\pi\sqrt{-1}k/N)}\frac{\log(1-y)}{y}\,dy,
\end{split}
\end{equation*}
where we put $y:=\exp(2\pi\sqrt{-1}x)$ in the last equality and $\underset{N\to\infty}{\approx}$ means a very rough approximation (which may be not true at all) for large $N$.
\par
This integral is known as the dilogarithm function.
We put
\begin{equation*}
  \Li_2(z)
  :=
  -\int_{0}^{z}\frac{\log(1-y)}{y}\,dy
\end{equation*}
for $z\in\C\setminus{[1,\infty)}$.
This is called dilogarithm since it has the Taylor expansion for $|z|<1$ as
\begin{equation*}
  \sum_{n=1}^{\infty}\frac{z^n}{n^2}.
\end{equation*}
For more details about this function, see for example \cite{Zagier:2007}.
\par
So by using the dilogarithm function, we have the following approximation:
\begin{equation*}
  \log(\xi_N)_{k^{+}}
  \underset{N\to\infty}{\approx}
  \frac{N}{2\pi\sqrt{-1}}
  \left[
    \Li_2(1)-\Li_2(\xi^k_N)
  \right].
\end{equation*}
Since $\Li_2(1)$, which equals $\zeta(2)=\pi^2/6$ ($\zeta(z)$ is the Riemann zeta function), can be ignored for large $N$, we have
\begin{equation*}
  (\xi_N)_{k^{+}}
  \underset{N\to\infty}{\approx}
  \exp
  \left[
    -\frac{N}{2\pi\sqrt{-1}}\Li_2(\xi^{k}_N)
  \right].
\end{equation*}
Similarly we have
\begin{equation*}
  (\xi_N)_{k^{-}}
  \underset{N\to\infty}{\approx}
  \exp
  \left[
    -\frac{N}{2\pi\sqrt{-1}}\Li_2(\xi^{-k}_N)
  \right].
\end{equation*}
\par
Therefore from \eqref{eq:Jones_R} the colored Jones polynomial can be (roughly) approximated as follows.
\begin{multline}\label{eq:Jones_approx}
  J_N(K;\xi_N)
  \underset{N\to\infty}{\approx}
  \sum_{\text{labelings}}
  \exp
  \left[\vphantom{\sum_{\text{crossings}}}
    \frac{N}{2\pi\sqrt{-1}}
  \right.
  \\
  \left.
  \times
  \left(
    \sum_{\text{crossings}}
    \left\{
      \Li_2(\xi^m_N)
      +
      \Li_2(\xi^{\pm i}_N)
      +
      \Li_2(\xi^{\mp j}_N)
      +
      \Li_2(\xi^{\pm k}_N)
      +
      \Li_2(\xi^{\mp l}_N)
      +
      \text{$\log$ terms}
    \right\}
  \right)
  \right],
\end{multline}
where the $\log$ terms come from powers of $\xi_N$ and $N$.
\par
Since the term
\begin{equation}\label{eq:dilog}
  \sum_{\text{crossings}}
  \left\{
    \Li_2(\xi^m_N)
    +
    \Li_2(\xi^{\pm i}_N)
    +
    \Li_2(\xi^{\mp j}_N)
    +
    \Li_2(\xi^{\pm k}_N)
    +
    \Li_2(\xi^{\mp l}_N)
    +
    \text{$\log$ terms}
  \right\}
\end{equation}
can be regarded as a function of $\xi_N^{i_1},\xi_N^{i_2},\dots,\xi_N^{i_c}$ with $i_1,i_2,\dots,i_c$ labelings of arcs, we can write it as $V(\xi_N^{i_1},\xi_N^{i_1}\dots,\xi_N^{i_c})$.
Note that $m$ can be expressed as a difference of two such labelings.
So we have
\begin{equation}\label{eq:Jones_sum}
  J_N(K;\xi_N)
  \underset{N\to\infty}{\approx}
  \sum_{i_1,i_2,\dots,i_c}
  \exp
  \left[
    \frac{N}{2\pi\sqrt{-1}}
    V(\xi_N^{i_1},\xi_N^{i_2},\dots,\xi_N^{i_c})
  \right].
\end{equation}
\par
We want to apply a method used in the proof for the figure-eight knot in \S\ref{subsec:proof_fig8}.
Recall that the point of the proof is to find the maximum term of the summand because the maximum dominates the asymptotic behavior.
We will seek for the ``maximum'' term of the summation in \eqref{eq:Jones_sum}.
\par
To do that we first approximate this with the following integral:
\begin{equation}\label{eq:Jones_integral2}
  J_N(K;\xi_N)
  \underset{N\to\infty}{\approx}
  \int_{J_1}\int_{J_2}\cdots\int_{J_c}
  \exp
  \left[
    \frac{N}{2\pi\sqrt{-1}}
    V(z_1,z_2\dots,z_c)
  \right]\,dz_1dz_2\cdots dz_c,
\end{equation}
where $z_d$ corresponds to $\xi_N^{i_d}$ and $J_1,J_2,\dots,J_c$ are certain contours.
(The argument here is not rigorous.
In particular I do not know how to choose the contours.)
\par
Then we will find the maximum of the absolute value of the integrand.
To be more precise we apply the steepest descent method.
For a precise statement, see for example \cite[Theorem~7.2.9]{Marsden/Hoffman:Complex_Analysis}).
\begin{theorem}[steepest descent method]\label{thm:steepest_descent_method}
Under certain conditions for the functions $f$, $g$, and a contour $C$, we have
\begin{equation*}
\begin{split}
  \int_{C}g(z)\exp(N\,h(z))\,dz
  &\underset{N\to\infty}{\sim}
  \frac{\exp(N\,h(z_0))\sqrt{2\pi}g(z_0)}{\sqrt{N}\sqrt{-h''(z_0)}}
  \\
  &\underset{N\to\infty}{\approx}
  \exp(N\,h(z_0)),
\end{split}
\end{equation*}
where $h'(z_0)=0$ and $\Re(h(z))$ takes its positive maximum at $z_0$.
\par
Note that the symbol $\underset{N\to\infty}{\sim}$ means that the ratio of both sides converges to $1$ when $N\to\infty$ and that we ignore the constant term and $\sqrt{N}$ in the rough approximation $\underset{N\to\infty}{\approx}$ because $\exp(N\,h(z_0))$ grows exponentially when $N\to\infty$.
$($Recall that we want to know the limit of $\log|J_N(K,\xi_N)|/N$ and so polynomial terms will not matter.$)$
\end{theorem}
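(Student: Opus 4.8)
The plan is to carry out the classical Laplace--steepest-descent argument in three stages: deform the contour so that it passes through the critical point $z_0$ along a path of steepest descent, localize the integral to a small neighborhood of $z_0$, and evaluate the resulting Gaussian integral.

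First I would invoke Cauchy's theorem. Because $g$ and $h$ are holomorphic on a region containing the contour, I may deform $C$ into a contour $C'$ that passes through $z_0$ and, near $z_0$, follows the steepest descent direction, i.e.\ the direction along which $\Im\bigl(h(z)\bigr)$ stays constant while $\Re\bigl(h(z)\bigr)$ decreases most rapidly. The unstated ``certain conditions'' are precisely what guarantees that this deformation is legitimate and that the connecting arcs --- or, if $C$ is unbounded, the behavior at its ends --- contribute a quantity negligible next to $\exp\bigl(N\,h(z_0)\bigr)$. Along the steepest descent path one has $h(z)=h(z_0)-\phi(z)$ with $\phi\ge 0$ real and $\phi(z_0)=0$, so that $\exp\bigl(N\,h(z)\bigr)$ is a genuine bump, exponentially concentrated at $z_0$ as $N\to\infty$.

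Next I would localize and evaluate. Since $h'(z_0)=0$ and $h''(z_0)\ne 0$, the map $w:=\sqrt{2\bigl(h(z_0)-h(z)\bigr)}$ (with the branch selected by the direction of $C'$ at $z_0$) has nonvanishing derivative $\sqrt{-h''(z_0)}$ at $z_0$, hence is biholomorphic there; it satisfies $w(z_0)=0$, $h(z)-h(z_0)=-\tfrac12 w^2$, and $\bigl(dz/dw\bigr)\big|_{w=0}=1/\sqrt{-h''(z_0)}$. Outside a fixed small neighborhood of $z_0$ one has $\Re\bigl(h(z)\bigr)\le\Re\bigl(h(z_0)\bigr)-\delta$ for some $\delta>0$, so that part of the integral is $O\bigl(e^{N(\Re h(z_0)-\delta)}\bigr)$ and may be discarded. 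On the remaining short arc I substitute $w=w(z)$, deform the resulting short $w$-contour onto the real axis, rescale $w=t/\sqrt N$, and use continuity of $g\bigl(z(w)\bigr)\,dz/dw$ at $w=0$; the leading contribution is then $\exp\bigl(N\,h(z_0)\bigr)\,g(z_0)\,\frac{1}{\sqrt{-h''(z_0)}}\,\frac{1}{\sqrt N}\int_{-\infty}^{\infty}e^{-t^2/2}\,dt$. Since $\int_{-\infty}^{\infty}e^{-t^2/2}\,dt=\sqrt{2\pi}$, this is exactly the asserted equivalence $\underset{N\to\infty}{\sim}$, and discarding the constant factor and the $\sqrt N$ (harmless for $\log|\cdot|/N$ asymptotics) gives the rough form $\underset{N\to\infty}{\approx}\exp\bigl(N\,h(z_0)\bigr)$.

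The main obstacle is not any one computation but the global contour analysis concealed in the ``certain conditions'': one must know that $C$ can in fact be pushed onto a steepest descent path through $z_0$ without the endpoints or the intermediate arcs producing a contribution of comparable size, that $\Re(h)$ attains a strict maximum there with no competing critical point of equal height, and that $g$ neither vanishes nor blows up near $z_0$. Granting those hypotheses, what remains is the routine bookkeeping of the error terms sketched above; this is exactly why the cited source \cite[Theorem~7.2.9]{Marsden/Hoffman:Complex_Analysis} phrases the result with such conditions built into its hypotheses.
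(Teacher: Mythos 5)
The paper does not prove this theorem at all: it is quoted as a classical result, with the hypotheses deliberately left as ``certain conditions'' and the precise statement deferred to \cite[Theorem~7.2.9]{Marsden/Hoffman:Complex_Analysis}. Your sketch is the standard steepest-descent argument that such a reference would give --- deform $C$ through the saddle, localize, substitute $w=\sqrt{2(h(z_0)-h(z))}$, and evaluate the Gaussian --- and the bookkeeping (in particular $dz/dw|_{w=0}=1/\sqrt{-h''(z_0)}$ and $\int_{-\infty}^{\infty}e^{-t^2/2}\,dt=\sqrt{2\pi}$) correctly reproduces the stated asymptotic; you also rightly locate all the genuine content in the unstated hypotheses about the contour and the dominance of $z_0$, which is exactly where the paper itself punts.
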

\begin{remark}
In general, to apply the steepest descent method, we need to change the contour $C$ so that it passes through $z_0$.
\end{remark}
Now we apply (a multidimensional version of) this method to \eqref{eq:Jones_integral2} and we will find the the maximum of $\{\Im V(z_1,\dots,z_c)\mid{(z_1,z_2,\dots,z_c)\in J_1\times J_2\dots\times J_c}\}$.
Let $(x_1,x_2,\dots,x_c)$ be such a point.
Then we have
\begin{equation}\label{eq:steepest_descent}
  J_N(K;\xi_N)
  \underset{N\to\infty}{\approx}
  \exp
  \left[
    \frac{N}{2\pi\sqrt{-1}}
    V(x_1,x_2,\dots,x_c)
  \right]
\end{equation}
and so we finally have
\begin{equation}\label{eq:limit}
  2\pi\sqrt{-1}
  \lim_{N\to\infty}
  \frac{\log{J_N(K;\xi_N)}}{N}
  =
  V(x_1,x_2,\dots,x_c).
\end{equation}
Note that the point $(x_1,x_2,\dots,x_c)$ is a solution to the following equation:
\begin{equation}\label{eq:differential}
  \frac{\partial\,V}{\partial\,z_d}(z_1,z_2,\dots,z_c)=0
\end{equation}
for $d=1,2,\dots,c$.
\par
Remember that our argument here is far from rigor!
Especially I am cheating on the following points:
\begin{itemize}
\item Replacing a summation with an integral in \eqref{eq:Jones_integral2}.
Here we do not know how to choose the multidimensional contour.
\item Applying the steepest descent method in \eqref{eq:steepest_descent}.
In general, we have many solutions to the system of equations \eqref{eq:differential} but we do not know which solution gives the maximum.
Moreover we may need to change the contour so that it passes through the solution that gives the maximum but we do not know whether this is possible or not.
\end{itemize}
\subsection{Geometric interpretation of the limit}
In this subsection I will give a geometric interpretation of the limit \eqref{eq:limit}.
\par
Since $V(z_1,z_2,\dots,z_c)$ is the sum of the terms as in \eqref{eq:dilog}, we first describe a geometric meaning of $\Li_2(\zeta_N^{\pm i})$.
\par
Recall that an ideal hyperbolic tetrahedron can be put in $\H^3$.
Regarding the $xy$-plane as the complex plain, we can assume that the three of the four (ideal) vertices are at $0$, $1$ and $z\in\C$ ($\Im{z}>0$), respectively (Figure~\ref{fig:tetrahedron_parameter}).
\begin{figure}[h]
  \includegraphics[scale=0.7]{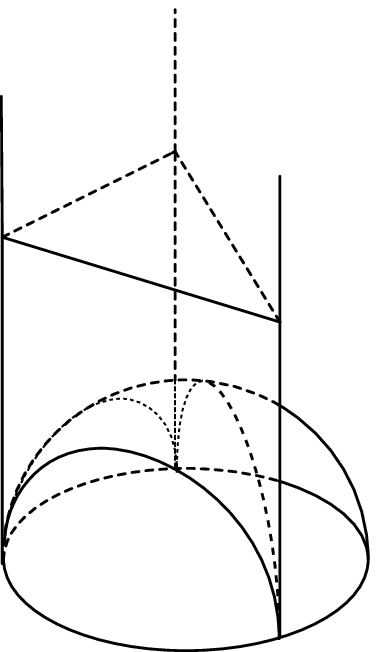}
  \quad\raisebox{13mm}{$\Rightarrow$}\quad
  \raisebox{3mm}{\includegraphics[scale=0.3]{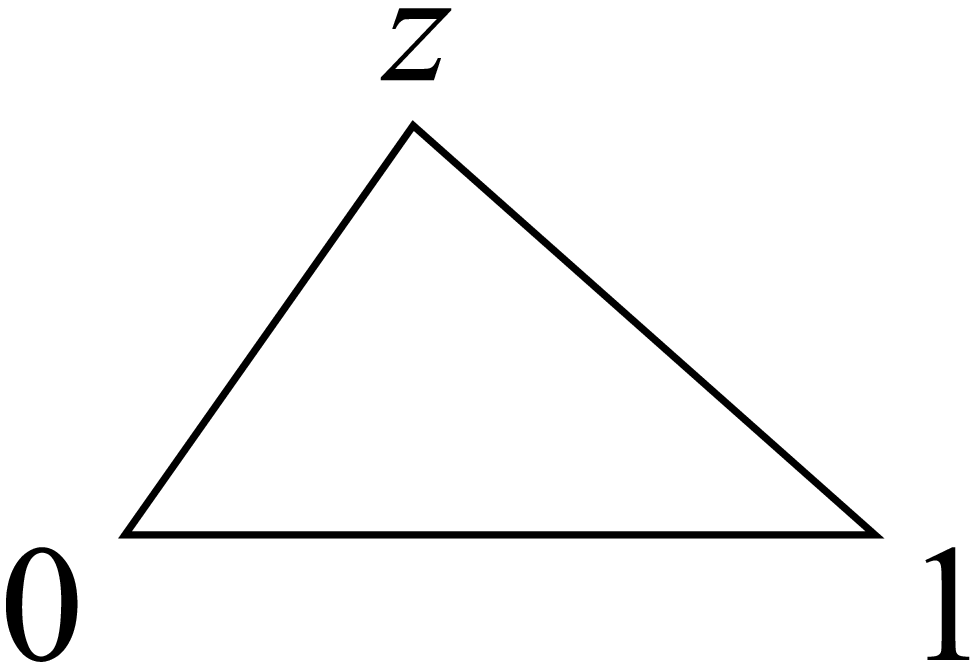}}
  \caption{Parametrization of ideal hyperbolic tetrahedra}
  \label{fig:tetrahedron_parameter}
\end{figure}
Thus the set $\{z\in\C\mid\Im{z}>0\}$ gives a parametrization of ideal hyperbolic tetrahedra.
We denote by $\Delta(z)$ the hyperbolic tetrahedron parametrized by $z$.
\par
The volume of $\Delta(z)$ is given as follows (see for example \cite[p.~324]{Neumann/Zagier:TOPOL85}):
\begin{equation}\label{eq:tetra_vol}
  \Vol(\Delta(z))
  =
  \Im\Li_2(z)+\log|z|\arg(1-z).
\end{equation}
\par
So we expect $V(x_1,x_2,\dots,x_c)$ gives the sum of the volumes of certain tetrahedra related to the knot.
\par
In fact we can express the volume of the knot complement in terms of $V(z_1,z_2,\dots,z_c)$ \cite{D.Thurston:Grenoble,Yokota:GTM02}.
\par
We follow \cite{D.Thurston:Grenoble} to describe this.
\par
We decompose the knot complement into topological, truncated tetrahedra.
To do this we put an octahedron at each positive crossing as in Figure~\ref{fig:crossing_octa}, where $i$, $j$, $k$ and $l$ are labeling of the four arcs around the vertex.
\begin{figure}[h]
  \includegraphics[scale=0.3]{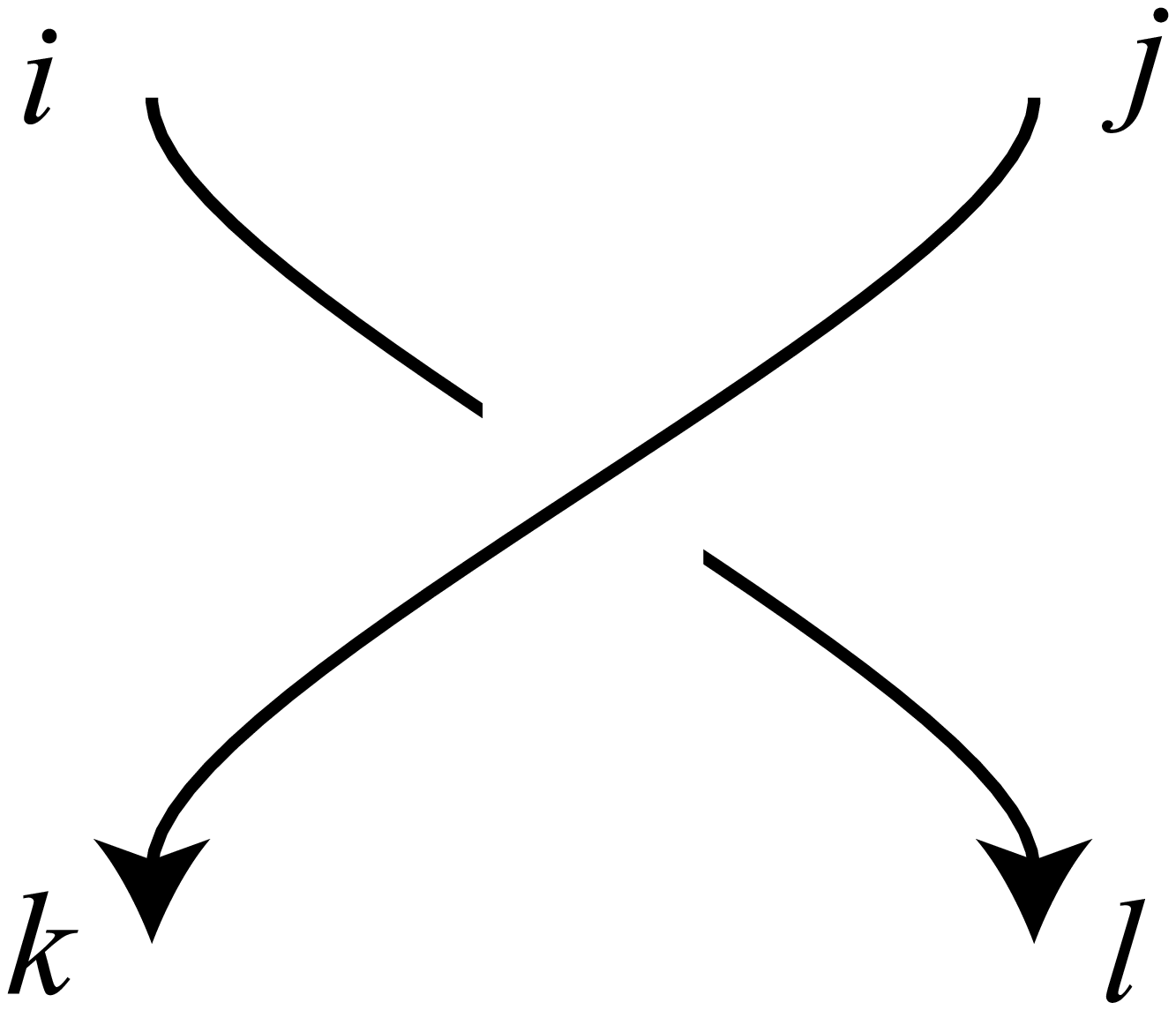}
  \quad\raisebox{17mm}{$\Rightarrow$}\quad
  \includegraphics[scale=0.3]{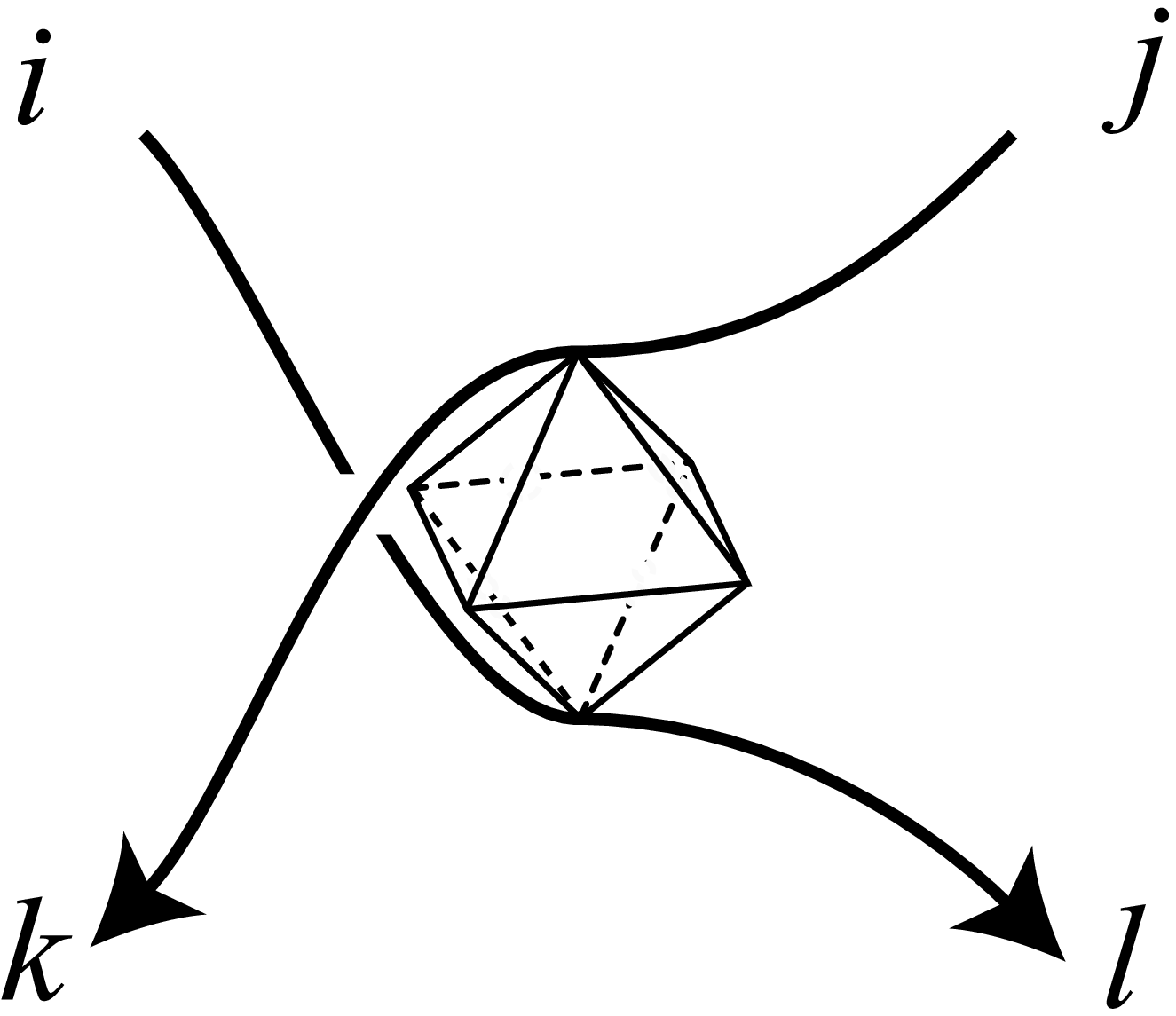}
  \caption{An octahedron put at a crossing}
  \label{fig:crossing_octa}
\end{figure}
Then decompose the octahedron into five tetrahedra as in Figure~\ref{fig:octa_tetra}, where the four of them are decorated with $\xi_N^i$, $\xi_N^{-j}$, $\xi_N^k$ and $\xi_N^{-l}$, respectively and the one in the center is decorated with $\xi_N^m$, where $m:=l-i=j-k$.
Here each truncated tetrahedron is just a topological one with some decoration.
\begin{figure}[h]
  \includegraphics[scale=0.3]{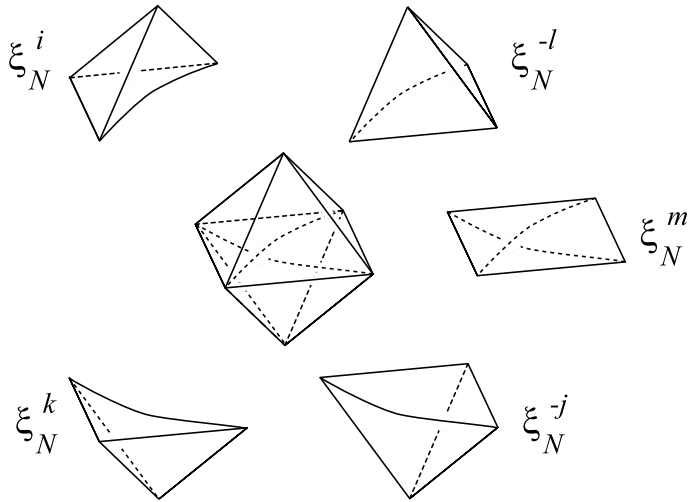}
  \caption{decomposition of the octahedron into five tetrahedra}
  \label{fig:octa_tetra}
\end{figure}
\par
Now only two of the vertices are attached to the knot.
We pull the two of the remaining four vertices to the top ($+\infty$) and the other two to the bottom ($-\infty$) as shown in Figure~\ref{fig:octa_tetra_infinity}.
\begin{figure}[h]
  \includegraphics[scale=0.3]{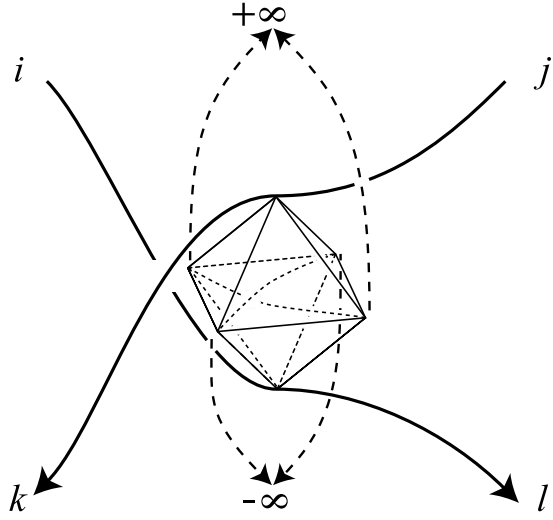}
  \caption{Pull the vertices to $+\infty$ and $-\infty$.}
  \label{fig:octa_tetra_infinity}
\end{figure}
We attach five tetrahedra to every crossing (if the crossing is negative, we change them appropriately) in this way.
At each arc two faces meet, and we paste them together.
Thus we have a decomposition of $S^3\setminus(K\cup\{+\infty,-\infty\})$.
By deforming this decomposition a little we get a decomposition of $S^3\setminus{K}$ by (topological) truncated tetrahedra, decorated with complex numbers $\xi^{\pm i_k}_N$ (k=1,2,\dots,c).
\par
Next we want to regard each tetrahedron as an ideal hyperbolic one.
\par
Recall that when we approximate the summation in \eqref{eq:Jones_sum} by the integral in \eqref{eq:Jones_integral2} we replace $\xi_N^{i_k}$ with a complex variable $z_k$.
Following this we replace the decoration $\xi_N^{i_k}$ for a tetrahedron with a complex number $z_k$.
Then regard the tetrahedron decorated with $z_k$ as an ideal hyperbolic tetrahedron parametrized by $z_k$.
\par
So far this is just formal parametrizations.
We need to choose appropriate values for parameters so that the tetrahedra fit together to provide a complete hyperbolic structure to $S^3\setminus{K}$.
To do this we choose $z_1,z_2,\dots,z_c$ so that:
\begin{itemize}
\item
Around each edge several tetrahedra meet.
To make the knot complement hyperbolic, the sum of these dihedral angles should be $2\pi$,
\item
Even if the knot complement is hyperbolic, the structure may not be complete.
To make it complete, the parameters should be chosen as follows.
\par
Since we truncate the vertices of the tetrahedra, four small triangles appear at the places where the vertices were (see Figure~\ref{fig:tetrahedron_parameter} for the triangle associated with the vertex at infinity).
After pasting these triangles make a torus which can be regarded as the boundary of the regular neighborhood of the knot $K$.
Each triangle has a similarity structure provided by the parameter $z_k$.
We need to make this boundary torus Euclidean.
\end{itemize}
See \cite[Chapter~4]{Thurston:GT3M},\cite[\S~2]{Neumann/Zagier:TOPOL85} for more details.
\par
Surprisingly these conditions are the same as the system of equations \eqref{eq:differential} that we used in the steepest descent method!
Therefore we can expect that a solution $(x_1,x_2,\dots,x_c)$ to \eqref{eq:differential} gives the complete hyperbolic structure.
\par
Then, what does $V(x_1,x_2,\dots,x_c)=2\pi\sqrt{-1}\lim_{N\to\infty}\log\bigl(J_N(K,\xi_N)\bigr)/N$ mean?
\par
Recall the formula \eqref{eq:tetra_vol} and that $V(x_1,x_2,\dots,x_c)$ is a sum of dilogarithm functions and logarithm functions.
Using these facts we can prove
\begin{equation*}
  \Im{V(x_1,x_2,\dots,x_c)}
  =
  \Vol(S^3\setminus{K}),
\end{equation*}
that is,
\begin{equation*}
  \Im
  \left(
    2\pi\sqrt{-1}\lim_{N\to\infty}\frac{\log\bigl(J_N(K,\xi_N)\bigr)}{N}
  \right)
  =
  \Vol(S^3\setminus{K}).
\end{equation*}
So we have proved
\begin{equation*}
  2\pi\lim_{N\to\infty}\frac{\log|J_N(K,\xi_N)|}{N}
  =
  \Vol(S^3\setminus{K}),
\end{equation*}
which is the Volume Conjecture (Conjecture~\ref{conj:VC}).
\section{Generalizations of the Volume Conjecture}
\label{sec:generalization}
In this section we consider generalizations of the Volume Conjecture.
\subsection{Complexification}
In \cite{Thurston:BULAM31982} W.~Thurston pointed out that the Chern--Simons invariant \cite{Chern/Simons:ANNMA21974} can be regarded as an imaginary part of the volume.
Neumann and Zagier gave a precise conjecture \cite[Conjecture, p.~309]{Neumann/Zagier:TOPOL85} which was proved to be true by Yoshida \cite{Yoshida:INVEM85}.
For combinatorial approaches to the Chern--Simons invariant, see \cite{Neumann1992} and \cite{Zickert:DUKMJ2009}.
\par
So it would be natural to drop the absolute value sign of the left hand side of the Volume Conjecture and add the Chern--Simons invariant to the right hand side.
\begin{conjecture}[Complexification of the Volume Conjecture
\cite{Murakami/Murakami/Okamoto/Takata/Yokota:EXPMA02}]
\label{conj:CVC}
If a knot $K$ is hyperbolic, that is, its complement possesses a complete hyperbolic structure, then
\begin{multline*}
  2\pi\lim_{N\to\infty}\frac{\log J_N(K;\exp(2\pi\sqrt{-1}/N)) }{N}
  \equiv
  \Vol(S^3\setminus{K})+\sqrt{-1}\CS(S^3\setminus{K})
  \\
  \pmod{\pi^2\sqrt{-1}\Z},
\end{multline*}
where $\CS$ is the Chern--Simons invariant defined for a three-manifold with torus boundary by Meyerhoff \cite{Meyerhoff:density}.
\end{conjecture}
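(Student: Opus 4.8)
The statement is a conjecture, so what follows is a strategy rather than a proof. The plan is to sharpen and extend the heuristic of \S\ref{sec:VC_proof} so that it retains the \emph{entire} complex number $2\pi\lim_{N\to\infty}\bigl(\log J_N(K;\xi_N)\bigr)/N$ and not merely its imaginary part; here $\xi_N=\exp(2\pi\sqrt{-1}/N)$ as before.

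First, starting from the state sum \eqref{eq:Jones_R}, the dilogarithmic approximation \eqref{eq:Jones_approx}--\eqref{eq:Jones_sum}, and the multidimensional version of the steepest descent method (Theorem~\ref{thm:steepest_descent_method}), one is led, exactly as in \eqref{eq:limit}, to
\begin{equation*}
  2\pi\sqrt{-1}\lim_{N\to\infty}\frac{\log J_N(K;\xi_N)}{N}
  =
  V(x_1,x_2,\dots,x_c),
\end{equation*}
where $(x_1,x_2,\dots,x_c)$ is the solution of the critical point equations \eqref{eq:differential} that corresponds to the complete hyperbolic structure on $S^3\setminus K$. Since $2\pi\bigl(V/(2\pi\sqrt{-1})\bigr)=-\sqrt{-1}\,V$, the asserted congruence is equivalent to
\begin{equation*}
  V(x_1,x_2,\dots,x_c)
  \equiv
  \sqrt{-1}\,\Vol(S^3\setminus K)-\CS(S^3\setminus K)
  \pmod{\pi^2\Z},
\end{equation*}
whose imaginary part, $\Im V=\Vol(S^3\setminus K)$, is precisely what was extracted in \S\ref{sec:VC_proof}. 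So the genuinely new content is the identity $\Re V(x_1,\dots,x_c)\equiv-\CS(S^3\setminus K)$ modulo $\pi^2\Z$.

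Second, on the geometric side, I would invoke the combinatorial description of the \emph{complex volume} $\Vol+\sqrt{-1}\,\CS$ of a cusped hyperbolic three-manifold due to Neumann--Zagier \cite{Neumann/Zagier:TOPOL85}, Yoshida \cite{Yoshida:INVEM85}, Neumann \cite{Neumann1992} and Zickert \cite{Zickert:DUKMJ2009}. For the ideal triangulation of $S^3\setminus K$ by the truncated tetrahedra $\Delta(z_k)$ produced from the octahedra in Figures~\ref{fig:crossing_octa}--\ref{fig:octa_tetra_infinity}, that description expresses $\sqrt{-1}\bigl(\Vol+\sqrt{-1}\,\CS\bigr)$, modulo $\pi^2$, as a sum of dilogarithms $\Li_2(z_k)$ of the shape parameters corrected by logarithmic terms and integer ``flattenings'', where the flattenings are exactly those forced by the edge relations and the cusp (completeness) relation. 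The key observation, going back to D.~Thurston \cite{D.Thurston:Grenoble} and Yokota \cite{Yokota:GTM02}, is that the potential function $V$ assembled from \eqref{eq:dilog} is, term by term, this same dilogarithmic sum, with the $\log$-terms in \eqref{eq:dilog} playing the role of the flattenings, and that the critical point equations \eqref{eq:differential} coincide with the gluing and completeness equations of the triangulation. Comparing the value of $V$ at the geometric critical point with the Neumann--Zagier/Neumann formula then gives $V(x_1,\dots,x_c)\equiv\sqrt{-1}\,\Vol-\CS\pmod{\pi^2\Z}$, the indeterminacy being exactly the freedom in the choice of flattenings (equivalently, the choice of lift to the extended Bloch group and of the branches of $\Li_2$, which also absorbs the branch ambiguity of $\log J_N$).

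The main obstacle is, as already stressed in \S\ref{sec:VC_proof}, entirely analytic. One must honestly replace the sum in \eqref{eq:Jones_sum} by a contour integral, choose the multidimensional contour so that it can be deformed to pass through the \emph{geometric} critical point $(x_1,\dots,x_c)$, and then prove that this critical point dominates---that is, that no other solution of \eqref{eq:differential} met by the contour has a strictly larger value of $\Re\bigl(V/(2\pi\sqrt{-1})\bigr)=\Im V/(2\pi)$. For this one needs uniform asymptotics of the quantum factorials $\{k\}!\big|_{q=\xi_N}$ (equivalently, of Faddeev's quantum dilogarithm), together with a global Morse-theoretic control of $\Im V$ on $(\C\setminus\{0,1\})^c$. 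It is precisely at this point that the argument remains a conjecture, and it has so far been carried through rigorously only in special cases.
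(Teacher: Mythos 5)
This statement is a conjecture that the paper itself does not prove, and your proposal correctly treats it as such, offering a strategy consistent with everything the paper does provide. Your sketch follows essentially the same heuristic route as the paper's own supporting-evidence argument in \S~\ref{sec:VC_proof} --- state sum, dilogarithmic potential $V$, steepest descent at the geometric critical point, with $\Im V=\Vol$ --- extended to the real part via the Neumann--Zagier/Yoshida/Neumann/Zickert description of the complex volume, which is exactly the direction the paper's own citations \cite{Yoshida:INVEM85}, \cite{Neumann1992}, \cite{Zickert:DUKMJ2009} point to, and you flag the same analytic obstructions (choice of contour, dominance of the geometric critical point) that the paper itself identifies as the reason the statement remains conjectural.
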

\begin{remark}
We may regard the left hand side as a definition of the Chern--Simons invariant for non-hyperbolic knots provided that the limit of Conjecture~\ref{conj:CVC} exists.
\end{remark}
\subsection{Deformation of the parameter}
In the Volume Conjecture (Conjecture~\ref{conj:VC}) and its complexification (Conjecture~\ref{conj:CVC}), the (possible) limit corresponds to the complete hyperbolic structure of $S^3\setminus{K}$ for a hyperbolic knot $K$.
As described in \cite[Chapter~4]{Thurston:GT3M} the complete structure can be deformed to incomplete ones.
\par
How can we perform this deformation in the colored Jones polynomial?
If we deform the parameter $2\pi\sqrt{-1}$ in the Volume Conjecture, is the corresponding limits related to incomplete hyperbolic structures?
\par
Let us consider the limit
\begin{equation*}
  \lim_{N\to\infty}\frac{\log J_N\Bigl(K;\exp\bigl((u+2\pi\sqrt{-1})/N\bigr)\Bigr) }{N}.
\end{equation*}
Note that when $u=0$, this limit is considered in the (complexified) Volume Conjecture.
\par
\subsubsection{Figure-eight knot}
Before stating a conjecture for general knots, I will explain what happens in the case of the figure-eight knot.
\begin{theorem}[\cite{Murakami/Yokota:JREIA2007}]\label{thm:MY}
Let $E$ be the figure-eight knot.
There exists a neighborhood $\mathcal{O}\subset\C$ of $0$ such that if $u\in(\mathcal{O}\setminus\pi\sqrt{-1}\Q)\cup\{0\}$, then the following limit exists:
\begin{equation*}
  \lim_{N\to\infty}
  \frac{\log J_N(E;\exp\bigl((u+2\pi\sqrt{-1})/N\bigr))}{N}.
\end{equation*}
Moreover if we put
\begin{align*}
  H(u)
  &:=
  (u+2\pi\sqrt{-1})\times(\text{the limit above}),
  \\
  \intertext{and}
  v
  &:=
  2\dfrac{d\,H(u)}{d\,u}-2\pi\sqrt{-1}
\end{align*}
then we have
\begin{multline*}
  \Vol(E_{u})+\sqrt{-1}\CS(E_{u})
  \\
  \equiv
  -\sqrt{-1}H(u)
  -\pi{u}
  +u\,v\sqrt{-1}/4
  -\pi\kappa(\gamma_u)/2
  \pmod{\pi^2\sqrt{-1}\Z}.
\end{multline*}
\par
Here $E_{u}$ is the closed hyperbolic three-manifold associated with the following representation of $\pi_1\left(S^3\setminus{E}\right)\to SL(2;\C)$:
\begin{equation}\label{eq:meridian_longitude}
\begin{split}
  \mu
  &\mapsto
  \begin{pmatrix}\exp(u/2)&\ast\\0&\exp(-u/2)\end{pmatrix},
  \\[5mm]
  \lambda
  &\mapsto
  \begin{pmatrix}\exp(v/2)&\ast\\0&\exp(-v/2)\end{pmatrix}.
\end{split}
\end{equation}
Here $\mu$ is the meridian of $E$ $($a loop in $S^3\setminus{E}$ that goes around the knot, which generates $H_1(S^3\setminus{E})\cong\Z$$)$, $\lambda$ is the longitude $($a loop in $S^3\setminus{E}$ that goes along the knot such that it is homologous to $0$ in $H_1(S^3\setminus{E})$$)$, and $\gamma_u$ is the loop attached to $S^3\setminus{E}$ when we complete the hyperbolic structure defined by $u$.
We also put $\kappa(\gamma_{u}):=\length(\gamma_u)+\sqrt{-1}\torsion(\gamma_u)$, where $\length(\gamma_u)$ is the length of the attached loop $\gamma_u$, and $\torsion$ is its torsion, which is defined modulo $2\pi$ as the rotation angle when one travels along $\gamma_u$.
See \cite{Neumann/Zagier:TOPOL85} for details $($see also \cite{Murakami:FUNDM2004}$)$.
\end{theorem}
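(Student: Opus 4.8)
The plan is to combine the single-summand formula \eqref{eq:fig8_single} for $J_N(E;q)$ (equivalently \eqref{eq:Jones_fig8}) with a one-dimensional saddle-point analysis, and then to read off the geometry of the deformed figure-eight complement from the resulting potential function by means of the work of Neumann and Zagier \cite{Neumann/Zagier:TOPOL85}.

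First I would put $q:=\exp\bigl((u+2\pi\sqrt{-1})/N\bigr)$ in \eqref{eq:Jones_fig8}, so that
\begin{equation*}
  J_N\bigl(E;\exp((u+2\pi\sqrt{-1})/N)\bigr)
  =
  \sum_{j=0}^{N-1}\prod_{k=1}^{j}g_N(u;k),
\end{equation*}
where each factor $g_N(u;k)$ is a product of two $\sinh$-type terms whose arguments are affine in $k/N$. Exactly as in \S\ref{sec:VC_proof}, taking the logarithm turns $\log\prod_{k=1}^{j}g_N(u;k)$ into a Riemann sum that, for large $N$, is approximated by an integral of a dilogarithm-type function; this produces a potential $\Phi_u(z)$ assembled from terms $\Li_2(e^{\pm u}z^{\pm1})$ and elementary logarithms, with
\begin{equation*}
  J_N\bigl(E;\exp((u+2\pi\sqrt{-1})/N)\bigr)
  \underset{N\to\infty}{\approx}
  \int\exp\!\left(\frac{N}{u+2\pi\sqrt{-1}}\,\Phi_u(z)\right)dz .
\end{equation*}
Applying the steepest descent method (Theorem~\ref{thm:steepest_descent_method}) then gives
\begin{equation*}
  \lim_{N\to\infty}\frac{\log J_N\bigl(E;\exp((u+2\pi\sqrt{-1})/N)\bigr)}{N}
  =
  \frac{\Phi_u(z_0)}{u+2\pi\sqrt{-1}},
  \qquad
  \frac{\partial\Phi_u}{\partial z}(z_0)=0,
\end{equation*}
so that $H(u)=\Phi_u\bigl(z_0(u)\bigr)$. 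Restricting $u$ to a small neighborhood $\mathcal O$ keeps the relevant saddle analytic and dominant, and removing $\pi\sqrt{-1}\Q$ avoids the resonances at roots of unity that would obstruct convergence of the sum.

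Next I would identify the critical-point equation. Using $\frac{d}{dz}\Li_2(z)=-\log(1-z)/z$, the relation $\partial\Phi_u/\partial z=0$ becomes an algebraic equation in $z_0$ and $e^{u}$, and it is precisely Thurston's gluing equation for the two ideal tetrahedra that triangulate $S^3\setminus E$ with the meridian holonomy prescribed to be $\exp(u)$; hence $z_0(u)$ records the shapes of the (generally incomplete) hyperbolic structure labelled by $u$, whose completion is $E_u$ and whose longitude holonomy is $\exp(v)$. Since $\partial_z\Phi_u$ vanishes at $z_0$, the envelope property gives $dH/du=\partial_u\Phi_u\bigl(z_0(u)\bigr)$, and evaluating this partial derivative with the gluing equation in force yields $2\,dH/du-2\pi\sqrt{-1}=v$, as in the statement.

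Finally, to pass to $\Vol(E_u)+\sqrt{-1}\CS(E_u)$ I would invoke the Neumann--Zagier description of volume and Chern--Simons for deformed structures, together with Yoshida's theorem \cite{Yoshida:INVEM85}: the formula \eqref{eq:tetra_vol} makes the imaginary part of the suitably normalized dilogarithm sum over the tetrahedra equal to the volume, and the full complex quantity computes $\Vol+\sqrt{-1}\CS$ of the completed manifold up to the universal correction coming from the cusp shape and the core geodesic added in the completion. Tracking how $-\sqrt{-1}H(u)=-\sqrt{-1}\Phi_u(z_0)$ differs from the naive sum of tetrahedron volumes produces exactly the terms $-\pi u+u\,v\sqrt{-1}/4-\pi\kappa(\gamma_u)/2$, where $\kappa(\gamma_u)=\length(\gamma_u)+\sqrt{-1}\torsion(\gamma_u)$ is the contribution of $\gamma_u$, and the indeterminacy $\pmod{\pi^2\sqrt{-1}\Z}$ absorbs both the dilogarithm branch choices and the $2\pi\Z$-ambiguity of $\CS$. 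The main obstacle is twofold. On the analytic side, one must rigorously justify replacing the sum by the integral and the steepest-descent estimate --- in particular proving that $z_0(u)$ is the saddle at which $\Re\bigl(\Phi_u/(u+2\pi\sqrt{-1})\bigr)$ is maximal and that the contour of summation can be deformed through it. On the geometric side, one must carry out the comparison of the previous paragraph precisely enough to recover the correction terms with exactly the constants stated, which is where the detailed Neumann--Zagier formalism for non-complete hyperbolic structures is indispensable.
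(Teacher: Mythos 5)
Your outline follows essentially the same route as the paper: substitute $q=\exp\bigl((u+2\pi\sqrt{-1})/N\bigr)$ into \eqref{eq:Jones_fig8}, approximate the sum by a contour integral of $\exp\bigl(N\,H(x,e^{\theta})/\theta\bigr)$ with the dilogarithm potential \eqref{eq:H}, apply steepest descent to get $H(u)$ with saddle equation $y+y^{-1}=e^{\theta}+e^{-\theta}-1$, and then match this with the gluing equation \eqref{eq:xy} for the two-tetrahedron decomposition, the Neumann--Zagier identification of $u$, $v$ with the meridian and longitude holonomies, and the core-geodesic length formula \eqref{eq:length} to produce the stated correction terms. The gaps you flag (justifying the sum-to-integral step and the contour deformation, and carrying out the volume comparison to pin down the constants) are exactly the points the paper also defers to \cite{Murakami/Yokota:JREIA2007}.
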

We will give a sketch of the proof in the following two subsections.
\subsubsection{Calculation of the limit}
First we calculate the limit.
Note that here I give just a sketch of the calculation but it can be done rigorously.
For details see \cite{Murakami/Yokota:JREIA2007}.
\par
From \eqref{eq:Jones_fig8} we have
\begin{equation*}
  J_N(E;q)
  =
  \sum_{j=0}^{N-1}
  q^{jN}
  \prod_{k=1}^{j}
  \left(1-q^{-N-k}\right)
  \left(1-q^{-N+k}\right).
\end{equation*}
Put $q:=\exp(\theta/N)$ for $\theta$ near $2\pi\sqrt{-1}$.
If $\theta$ is not a rational multiple of $\pi\sqrt{-1}$ (but it can be $2\pi\sqrt{-1}$), we have
\begin{equation*}
\begin{split}
  &\log
  \left(
    \prod_{k=1}^{j}
    \left(1-q^{-N\pm k}\right)
  \right)
  \\
 =&
  \sum_{k=1}^{j}
  \log
  \big(1-\exp(\pm k\theta/N-\theta)\bigr)
  \\
  \underset{N\to\infty}{\approx}&
  N
  \int_{0}^{j/N}\log(1-\exp(\pm\theta s-\theta))\,ds
  \\
  =&
  \frac{\pm N}{\theta}
  \int_{\exp(-\theta)}^{\exp(\pm j\theta/N-\theta)}
  \frac{\log(1-t)}{t}dt
  \\
  =&
  \frac{\pm N}{\theta}
  \left(
    \Li_2\bigl(\exp(-\theta)\bigr)
    -
    \Li_2\bigl(\exp(\pm j\theta/N-\theta)\bigr)
  \right).
\end{split}
\end{equation*}
So we have
\begin{equation*}
\begin{split}
  &J_N\bigl(E;\exp(\theta/N)\bigr)
  \\
  \underset{N\to\infty}{\approx}&
  \sum_{j=0}^{N-1}
  \exp(j\theta)
  \exp
  \left[
    \frac{N}{\theta}
    \left(
      \Li_2\bigl(\exp(-j\theta/N-\theta)\bigr)
      -
      \Li_2\bigl(\exp(j\theta/N-\theta)\bigr)
    \right)
  \right]
  \\
  =&
  \sum_{j=0}^{N-1}
  \exp
  \left[
    \frac{N}{\theta}
    H\bigl(\exp(j\theta/N),\exp(\theta)\bigr)
  \right]
  \\
  \underset{N\to\infty}{\approx}&
  \int_{C}
  \exp
  \left[
    \frac{N}{\theta}
    H\bigl(x,\exp(\theta)\bigr)
  \right]
  \,dx
\end{split}
\end{equation*}
for a suitable contour $C$.
Here we put
\begin{equation}\label{eq:H}
  H(\zeta,\eta)
  :=
  \Li_2(1/(\zeta\eta))-\Li_2(\zeta/\eta)+\log{\zeta}\log{\eta}.
\end{equation}
\par
To apply the steepest descent method (Theorem~\ref{thm:steepest_descent_method}), we find the maximum of $\Re\left(H\bigl(x,\exp(\theta)\bigr)/\theta\right)$ over $x$.
To do that we will find a solution $y$ to the equation $d\,H\bigl(x,\exp(\theta)\bigr)/d\,x=0$, which is
\begin{equation*}
  \frac{\log\left[\exp(\theta)+\exp(-\theta)-x-x^{-1}\right]}{x}
  =0.
\end{equation*}
We can show that appropriately chosen $y$ gives the maximum and from the steepest descent method we have
\begin{equation*}
  J_N\bigl(E;\exp(\theta/N)\bigr)
  \underset{N\to\infty}{\approx}
  \exp
  \left[
    \frac{N}{\theta}
    H\bigl(y,\exp(\theta)\bigr)
  \right],
\end{equation*}
that is,
\begin{equation}\label{eq:GVC_limit}
  \theta
  \lim_{N\to\infty}
  \frac{J_N\bigl(E;\exp(\theta/N)\bigr)}{N}
  =
  H\bigl(y,\exp(\theta)\bigr),
\end{equation}
where $y$ satisfies
\begin{equation*}
  y+y^{-1}
  =
  \exp(\theta)+\exp(-\theta)-1.
\end{equation*}
\subsubsection{Calculation of the volume}
Next we will relate the limit to the volume of a three-manifold obtained by $S^3\setminus{E}$.
\par
As described in \S~\ref{subsubsec:geometry}, $S^3\setminus{E}$ is obtained by gluing two ideal hyperbolic tetrahedra as in Figure~\ref{fig:fig8_tetra}.
Here we assume that they are parametrized by complex numbers $z$ and $w$.
When $z=w=(1+\sqrt{-3})/2$, $S^3\setminus{E}$ has a complete hyperbolic structure as described in \S~\ref{subsubsec:geometry}.
We assume that the left tetrahedron (with faces labeled with $A$, $B$, $C$ and $D$) and the right tetrahedron (with faces labeled with $A'$, $B'$, $C'$ and $D'$) in Figure~\ref{fig:fig8_tetra} are $\Delta(z)$ and $\Delta(w)$ respectively.
\par
The boundary torus, which is obtained from the shadowed triangles in Figure~\ref{fig:fig8_tetra}, looks like Figure~\ref{fig:torus}.
Here the leftmost triangle is the one in the center of the picture of $\Delta(z)$ and the second leftmost one is the one in the center of the picture of $\Delta(w)$.
Let $\alpha$, $\beta$ and $\gamma$ be the dihedral angle between $B$ and $C$, $A$ and $B$, and $C$ and $A$ respectively.
Let $\alpha'$, $\beta'$ and $\gamma'$ be the dihedral angle between $B'$ and $D'$, $A'$ and $B'$, and $D'$ and $A'$ respectively.
\begin{figure}[h]
  \includegraphics[scale=0.4]{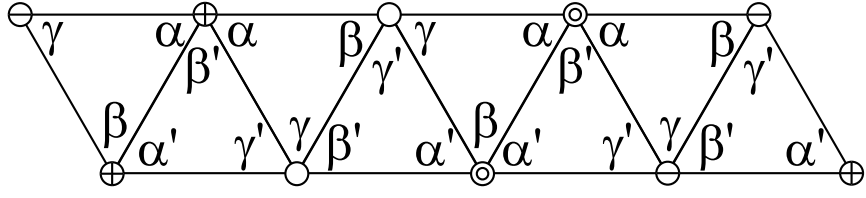}
  \caption{Identifying the sides as indicated by the circles,
  we get a triangulation of the boundary torus.
  Here the single circles denote the arrow head of the single arrow,
  the double circles denote the arrow head of the double arrow,
  the circles with $-$ denote the arrow tail of the single arrow,
  and the circles with $+$ denote the arrow tail of the double arrow
  in Figure~\ref{fig:fig8_tetra}.
  Note that we view this torus from outside of $S^3\setminus{E}$.}
  \label{fig:torus}
\end{figure}
As described in \S~\ref{subsubsec:geometry}, the sum of the dihedral angles around each edge should be $2\pi$.
So from Figure~\ref{fig:torus}, we have
\begin{align*}
  \beta+2\gamma+\beta'+2\gamma'&=2\pi\quad\text{from the single circles},
  \\
  2\alpha+\beta+2\alpha'+\beta'&=2\pi\quad\text{from the double circles},
  \\
  \beta+2\gamma+\beta'+2\gamma'&=2\pi\quad\text{from the circles with $-$},
  \\
  2\alpha+\beta+2\alpha'+\beta'&=2\pi\quad\text{from the circles with $+$},
\end{align*}
which is equivalent to a single equation
\begin{equation*}
  2\alpha+\beta+2\alpha'+\beta'=2\pi
\end{equation*}
since $\alpha+\beta+\gamma=\alpha'+\beta'+\gamma'=\pi$.
\par
Assume that $\alpha=\arg{z}$ and $\alpha'=\arg{w}$.
Since $\beta=\arg(1-1/z)$ and $\beta'=\arg(1-1/w)$ (see Figures~\ref{fig:top_view} and \ref{fig:tetrahedron_parameter}) this turns out to be
\begin{equation*}
  2\arg{z}+\arg(1-1/z)+2\arg{w}+\arg(1-1/w)=2\pi.
\end{equation*}
So we have
\begin{equation}\label{eq:hyperbolicity}
  zw(z-1)(w-1)=1.
\end{equation}
\begin{remark}
This is just a condition that $S^3\setminus{E}$ is hyperbolic.
To make the metric complete we need to add the condition that the upper side and the lower side of the parallelogram in Figure~\ref{fig:torus} are parallel.
\end{remark}
Now we introduce parameters $x$ and $y$ as
\begin{equation}\label{eq:xyzw}
\begin{split}
  x&:=w(1-z),
  \\
  y&:=-zw.
\end{split}
\end{equation}
Note that the following equality holds from \eqref{eq:hyperbolicity}:
\begin{equation}\label{eq:xy}
  y+y^{-1}=x+x^{-1}-1.
\end{equation}
\par
Since $\Delta(z)$ and $\Delta(w)$ can also be parametrized as $\Delta(1-1/z)$ and $\Delta(1-1/w)$, we have
\begin{multline*}
  \Vol(\Delta(z))+\Vol(\Delta(w))
  \\
  =
  \Im\Li_2(w')+\Im\Li_2(z')+\log|w'|\arg(1-w')+\log|z'|\arg(1-z'),
\end{multline*}
from \eqref{eq:tetra_vol}, where we put $z':=1-1/z$ and $w':=1-1/w$.
Using the equation
\begin{equation*}
  \Li_2(z')+\Li_2({z'}^{-1})
  =
  -\frac{\pi^2}{6}-\frac{1}{2}\bigl(\log(-z')\bigr)^2
\end{equation*}
(see for example \cite[\S~1.2]{Zagier:2007}), we have
\begin{equation}\label{eq:Vol_H1}
\begin{split}
  &\Vol(\Delta(z))+\Vol(\Delta(w))
  \\
  =&
  \Im\Li_2(w')-\Im\Li_2({z'}^{-1})
  +\log|w'|\arg(1-w')-\log|{z'}^{-1}|\arg(1-{z'}^{-1})
  \\
  &\text{(since $w'=1/(yx)$ and ${z'}^{-1}=y/x$)}
  \\
  =&
  \Im\Li_2(1/(yx))-\Im\Li_2(y/x)
  +\log|1/(yx)|\arg(1-1/(yx))-\log|y/x|\arg(1-y/x)
  \\
  &\text{(from \eqref{eq:xy})}
  \\
  =&
  \Im\Li_2(1/(yx))-\Im\Li_2(y/x)
  +\log|y|\arg{x}+\log|x|\arg\frac{1-y/x}{1-1/(yx)}
  \\
  &\text{(from \eqref{eq:xyzw} and \eqref{eq:hyperbolicity})}
  \\
  =&
  \Im\Li_2(1/(yx))-\Im\Li_2(y/x)
  +\log|y|\arg{x}+\log|x|\arg\frac{y}{z(z-1)}
  \\
  &\text{(from \eqref{eq:H})}
  \\
  =&
  \Im H(y,x)-\log|x|\arg(z(z-1)).
\end{split}
\end{equation}
\par
Putting
\begin{align*}
  u&:=\log{x}=\log(w(1-z)),
  \\
  v&:=2\log(z(1-z)),
  \\
  H(u)&:=H(y,x),
\end{align*}
we have
\begin{equation}\label{eq:Vol_H2}
  \Vol(\Delta(z))+\Vol(\Delta(w))
  =
  \Im H(u)-\pi\Re{u}-\frac{1}{2}\Re{u}\Im{v}.
\end{equation}
Moreover from \eqref{eq:GVC_limit}, we have
\begin{equation*}
  (u+2\pi\sqrt{-1})
  \lim_{N\to\infty}
  \frac{\log{J_N\bigl(E;\exp((u+2\pi\sqrt{-1})/N)\bigr)}}{N}
  =
  H(u)
\end{equation*}
if we put $\theta=u+2\pi\sqrt{-1}$ since $x=\exp(u)=\exp(\theta)$.
Note that $z$, $w$, $x$, $y$ and $v$ are functions of $u$.
Note also that $v$ is given as
\begin{equation*}
  v
  =
  2\frac{d\,H(u)}{d\,u}-2\pi\sqrt{-1}
\end{equation*}
since $\dfrac{d\,H(u)}{d\,u}=\log(z(z-1))$.
\begin{remark}
We need to be more careful about the arguments of variables.
For details see \cite{Murakami/Yokota:JREIA2007}.
\end{remark}
\par
I will give geometrical interpretation of $u$ and $v$ to relate the term $\Re{u}\Im{v}$ in \eqref{eq:Vol_H2} to the length of $\gamma_u$.
\par
We first calculate $H_1(S^3\setminus{E})=H_1(\Delta(z)\cup\Delta(w))$.
Since the interiors of three-simplices do not matter to the first homology, we can calculate it from the boundary torus, the edges of $\Delta(z)$ and $\Delta(w)$, and the faces $A=A'$, $B=B'$, $C=C'$, and $D=D'$ (see Figures~\ref{fig:fig8_tetra} and \ref{fig:torus}).
From Figures~\ref{fig:fig8_tetra}, \ref{fig:torus} and \ref{fig:torus_edge} one reads
\begin{align*}
  \partial A=\partial A'
  &=
   \raisebox{-1mm}{\rotatebox{45}{$\rightarrow$}}-e_{7}
  +\raisebox{-1mm}{\rotatebox{45}{$\twoheadrightarrow$}}+e_{10}
  -\raisebox{-1mm}{\rotatebox{45}{$\twoheadrightarrow$}}-e_{5},
  \\
  \partial B=\partial B'
  &=
  -\raisebox{-1mm}{\rotatebox{45}{$\rightarrow$}}-e_{11}
  -\raisebox{-1mm}{\rotatebox{45}{$\twoheadrightarrow$}}+e_{6}
  +\raisebox{-1mm}{\rotatebox{45}{$\twoheadrightarrow$}}-e_{9},
  \\
  \partial C=\partial C'
  &=
   \raisebox{-1mm}{\rotatebox{45}{$\twoheadrightarrow$}}+e_{4}
  +\raisebox{-1mm}{\rotatebox{45}{$\rightarrow$}}+e_{8}
  -\raisebox{-1mm}{\rotatebox{45}{$\rightarrow$}}+e_{1},
  \\
  \partial D=\partial D'
  &=
   \raisebox{-1mm}{\rotatebox{45}{$\rightarrow$}}+e_{3}
  -\raisebox{-1mm}{\rotatebox{45}{$\twoheadrightarrow$}}+e_{2}
  -\raisebox{-1mm}{\rotatebox{45}{$\rightarrow$}}+e_{12},
\end{align*}
where \raisebox{-1mm}{\rotatebox{45}{$\rightarrow$}} and \raisebox{-1mm}{\rotatebox{45}{$\twoheadrightarrow$}} mean the single arrowed edge and the double arrowed edge in Figure~\ref{fig:fig8_tetra} respectively, and the $e_i$ are the edges of the boundary torus as indicated in Figure~\ref{fig:torus_edge}.
Since
\begin{align*}
  e_{7}&=e_{6}+e_{2},
  \\
  e_{10}&=e_{6},
  \\
  e_{5}&=e_{1}+e_{6},
  \\
  e_{11}&=e_{4}+e_{12}=e_{4}+e_{6},
  \\
  e_{9}&=e_{3}+e_{10}=e_{3}+e_{6},
  \\
  e_{8}&=e_{6},
  \\
  e_{12}&=e_{6}
\end{align*}
in the first homology group, we have
\begin{align*}
  \raisebox{-1mm}{\rotatebox{45}{$\rightarrow$}}-e_1-e_2-e_6&=0,
  \\
  -\raisebox{-1mm}{\rotatebox{45}{$\rightarrow$}}-e_3-e_4-e_6&=0,
  \\
  \raisebox{-1mm}{\rotatebox{45}{$\twoheadrightarrow$}}+e_1+e_4+e_6&=0,
  \\
  -\raisebox{-1mm}{\rotatebox{45}{$\twoheadrightarrow$}}+e_2+e_3+e_6&=0.
\end{align*}
So if we put $\mu:=e_6$, $\lambda:=e_1+e_2+e_3+e_4+2\mu$, then we see that the first homology group of the boundary torus is generated by $\mu$ and $\lambda$, that $H_1(S^3\setminus{E})\cong\Z$ is generated by $\mu$, and that $\lambda=0$ in $H_1(S^3\setminus{E})$.
Therefore $\mu$ is the meridian and $\lambda$ is the longitude.
\begin{figure}[h]
  \includegraphics[scale=0.4]{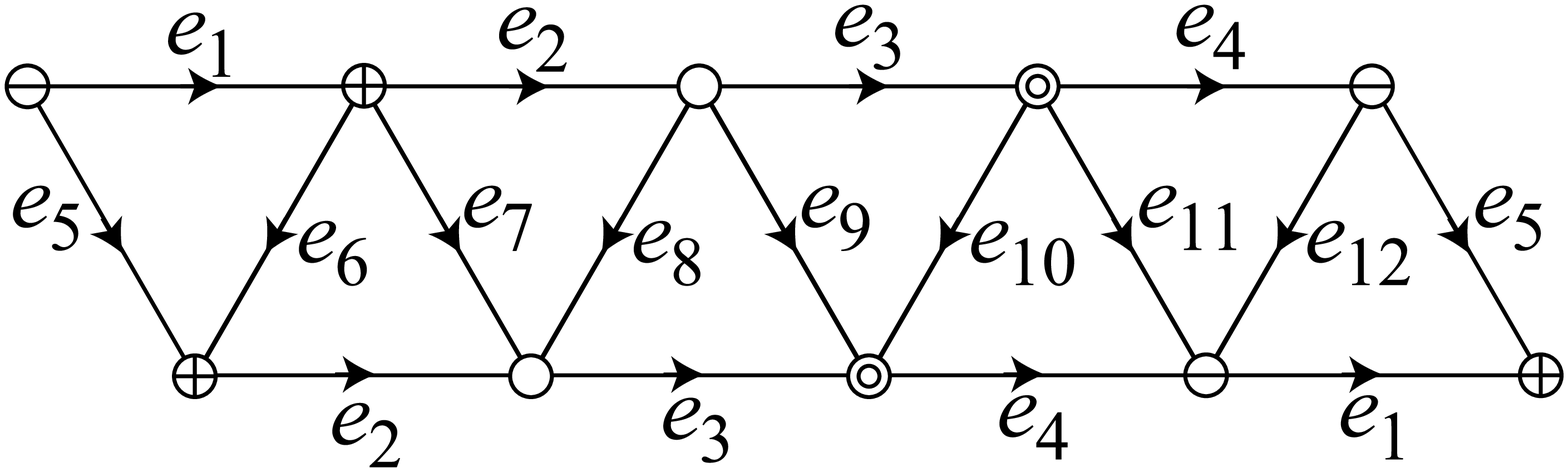}
  \caption{The cycle $e_6$ is the meridian and the cycle
  $e_1+e_6+e_2+e_8+e_3+e_4$ is the longitude.}
  \label{fig:torus_edge}
\end{figure}
\par
Now let us consider the universal cover $\widetilde{S^3\setminus{E}}$ of $S^3\setminus{E}=\Delta(z)\cup\Delta(w)$ which is $\H^3$.
We can construct it by developing $\Delta(z)$ and $\Delta(w)$ isometrically in $\H^3$.
Then each loop in $S^3\setminus{E}$ is regarded as a covering translation of $\widetilde{S^3\setminus{E}}$ and it defines an isometric translation of $\H^3$.
This defines a representation (holonomy representation) of $\pi_1\bigl(S^3\setminus{E})$ at $PSL(2;\C)$.
Taking a lift to $SL(2;\C)$, we can define a representation $\rho\colon\pi_1\bigl(S^3\setminus{E})\to SL(2;\C)$.
\par
We consider how $\rho(\mu)$ and $\rho(\lambda)$ act on $\partial\H^3=S^2=\C\cup\{\infty\}$.
The image of the meridian $\rho(\mu)$ sends the top side to the bottom side.
So it is the composition of a $-\alpha$-rotation around the circle with $+$ in the top (between $e_1$ and $e_2$) and a $\gamma'$-rotation around the single circle in the bottom (between $e_2$ and $e_3$), which means a multiplication by $1/z\times1/(1-w))=w(1-z)$ plus a translation from \eqref{eq:hyperbolicity}.
Similarly $\rho(\lambda)$ acts as a multiplication by $z^2(1-z)^2$ plus a translation.
\par
Therefore $u=\log(w(1-z))$ and $v=2\log(z(1-z))$ can be regarded as the logarithms of the actions by the meridian $\mu$ and the longitude $\lambda$, respectively.
\par
Since the meridian and the longitude commute in $\pi_1(S^3\setminus{E})$, their images can be simultaneously triangularizable.
Recalling that $\mu$ and $\lambda$ define multiplications by $\exp(u)$ and $\exp(v)$ plus translations on $\partial\H^3$, we may assume
\begin{align*}
  \rho(\mu)
  &=
  \begin{pmatrix}
    \exp(u/2)&\ast \\
    0        &\exp(-u/2)
  \end{pmatrix},
  \\
  \rho(\lambda)
  &=
  \begin{pmatrix}
    \exp(v/2)&\ast \\
    0        &\exp(-v/2)
  \end{pmatrix},
\end{align*}
which is \eqref{eq:meridian_longitude}.
This is a geometric interpretation of $u$ and $v$.
\par
Since $u$ determines $z$ and $w$, it defines a hyperbolic structure of $S^3\setminus{E}$ as the union of $\Delta(z)$ and $\Delta(w)$.
When $u\ne0$ this hyperbolic structure is incomplete.
We can complete this incomplete structure  by attaching either a point or a circle.
\par
Since $v$ is not a real multiple of $u$ when $u$ is small, there exists a pair $(p,q)\in\R^2$ such that $pu+qv=2\pi\sqrt{-1}$.
The pair $(p,q)$ is called the generalized Dehn surgery coefficient \cite{Thurston:GT3M}.
If $p$ and $q$ are coprime integers, then the completion is given by attaching a circle $\gamma_{u}$ and the result is a closed hyperbolic three-manifold which we denote by $E_u$.
(For other cases the completion is given by adding either a point or a circle.
In the former case the regular neighborhood of the attached point is a cone over a torus, and in the latter case the regular neighborhood of the attached circle is topologically a solid torus but geometrically the angle around the core is not $2\pi$.)
\par
If $p$ and $q$ are coprime integers, the completion is nothing but the $(p,q)$-Dehn surgery along the knot, that is, we attach a solid torus $D$ to $S^3\setminus\Int(N(E))$ so that the meridian of $D$ coincides with the loop on the boundary of the regular neighborhood $N(E)$ of $E\subset S^3$ presenting $p\mu+q\lambda\in H_1(S^3\setminus\Int(N(E)))$, where $\Int$ denotes the interior (Figure~\ref{fig:Dehn_surgery}).
\begin{figure}[h]
  \includegraphics[scale=0.3]{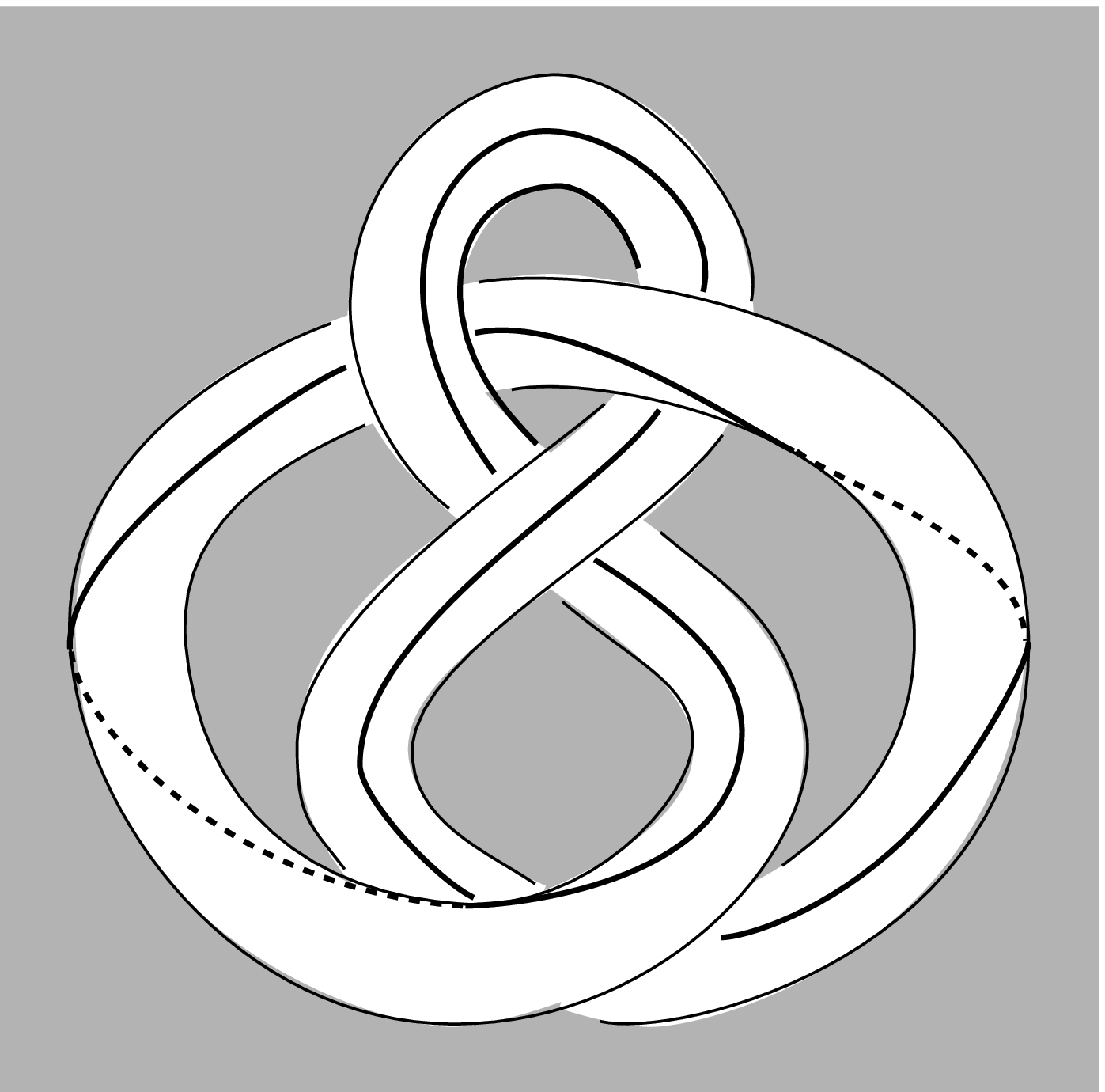}
  \raisebox{20mm}{\quad$\cup$\quad}
  \raisebox{4mm}{\includegraphics[scale=0.3]{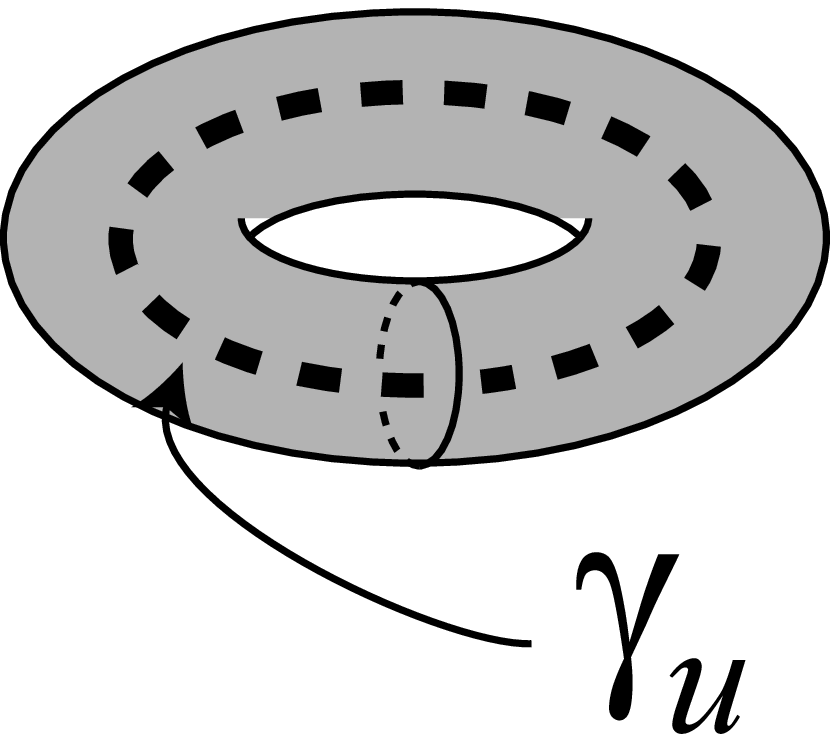}}
  \caption{$(2,1)$-Dehn surgery along the figure-eight knot}
  \label{fig:Dehn_surgery}
\end{figure}
Then the circle $\gamma_u$ can be regarded as the core of $D$.
\par
To complete the proof of Theorem~\ref{thm:MY}, we want to describe the length of the attached circle $\gamma_u$ in terms of $u$ and $v$.
We will show
\begin{equation}\label{eq:length}
  \length{\gamma}_{u}
  =
  -\frac{1}{2\pi}\Im\left(u\overline{v}\right).
\end{equation}
\par
When $u$ is small and non-zero, we can assume that $\exp(u)\ne1$ and $\exp(v)\ne1$.
So we can also assume that $\rho(\mu)$ and $\rho(\lambda)$ are both diagonal.
This means that the image of $\mu$ is a multiplication by $\exp(u)$ and that the image of $\lambda$ is a multiplication by $\exp(v)$ (with no translations).
Note that now $\widetilde{S^3\setminus{E}}$ is identified with $\H^3$ minus the $z$-axis, and the completion is given by adding the $z$-axis.
\par
Since $p$ and $q$ are coprime, we can choose integers $r$ and $s$ so that $ps-qr=1$.
We push $\gamma_u\in D$ to the boundary of the solid torus $\partial D$ and denote the resulting circle by $\tilde{\gamma}_u$.
Then we see that $[\tilde{\gamma}_u]=r\mu+s\lambda\in H_1(\partial(S^3\setminus\Int(N(E)));\Z)$ since the meridian of $D$ is identified with $p\mu+q\lambda$, and the images of the meridian and $\tilde{\gamma}_u$ make a basis of $H_1(\partial(S^3\setminus\Int(N(E)));\Z)$.
\begin{remark}
Even if we use another pair $(r',s')$ such that $ps'-qr'=1$, we get the same manifold.
This is because changing $(r,s)$ corresponds to changing of $\tilde{\gamma}_u\in\partial D$.
Observe that ambiguity of the choice of $\tilde{\gamma}_u$ is given by a twist of $D$ and that it does not matter to the resulting manifold.
\end{remark}
\par
Therefore $\rho(\gamma_u)$ corresponds to a multiplication by $\exp(\pm(ru+sv))$.
This means that if we identify the completion of $\widetilde{S^3\setminus{E}}$ with $\H^3$, a fundamental domain of the lift of $\gamma_u$ is identified with the segment $[1,\exp\bigl(\pm\Re(ru+sv)\bigr)]$ in the $z$-axis.
Since the metric is given by $\sqrt{dx^2+dy^2+dz^2}/z$, the length of $\gamma_u$ is given by
\begin{equation*}
\begin{split}
  \length(\gamma_u)
  &=
  \int_{1}^{\exp\bigl(\pm\Re(ru+sv)\bigr)}\frac{dz}{z}
  \\
  &=
  \pm\Re(ru+sv)
  \\
  &=
  \pm\left(\frac{ps-1}{q}\Re(u)+s\Re(v)\right)
  \\
  &=
  \pm\left(\frac{s}{q}(p\Re(u)+q\Re(v))-\frac{1}{q}\Re(u)\right)
  \\
  &=
  \mp\frac{\Re(u)}{q}
  \\
  &=
  \frac{\mp1}{2\pi}\left(\Re{u}\Im{v}-\Im{u}\Re{v}\right)
  \\
  &=
  \frac{\pm1}{2\pi}\Im(u\overline{v}),
\end{split}
\end{equation*}
where the fourth and the sixth equalities follow from
\begin{equation*}
  \begin{pmatrix}
    \Re(u)&\Re(v) \\
    \Im(u)&\Im(v)
  \end{pmatrix}
  \begin{pmatrix}
    p\\q
  \end{pmatrix}
  =
  \begin{pmatrix}
    0\\2\pi
  \end{pmatrix}.
\end{equation*}
Since $v=u\times\dfrac{|v|^2}{u\overline{v}}$ and the orientation of $(u,v)$ should be positive on $\C$, we see that $\Im(u\overline{v})$ is negative (see \cite{Neumann/Zagier:TOPOL85} for details) and so \eqref{eq:length} follows.
\par
Therefore from \eqref{eq:Vol_H2} we finally have
\begin{equation*}
  \Vol(E_u)
  =
  \Re
  \left(
    -\sqrt{-1}H(u)
    -\pi u
    +\frac{1}{4}uv\sqrt{-1}
    -\frac{\pi}{2}\kappa(\gamma_u)
  \right).
\end{equation*}
\par
The Chern--Simons invariant is obtained by Yoshida's formula \cite{Yoshida:INVEM85}.
See \cite{Murakami/Yokota:JREIA2007} for details.
\subsubsection{General knots}
Here I propose a generalization of the Volume Conjecture for general knots.
\begin{conjecture}[\cite{Murakami:ADVAM22007}]\label{conj:PVC}
For any knot $K$, there exists an open set $U\in\C$ such that if $u\in U$, then the following limit exists:
\begin{equation*}
  \lim_{N\to\infty}
  \frac{\log J_N(K;\exp\bigl((u+2\pi\sqrt{-1})/N\bigr))}{N}.
\end{equation*}
Moreover if we put
\begin{align*}
  H(K;u)&:=(u+2\pi\sqrt{-1})\times(\text{the limit above}),
  \\
  \intertext{and}
  v&:=2\dfrac{d\,H(K;u)}{d\,u}-2\pi\sqrt{-1},
\end{align*}
then we have
\begin{equation*}
  \Vol(K;u)
  =
  \Im{H(K;u)}
  -\pi\Re{u}
  -\frac{1}{2}\Re{u}\Im{v}.
\end{equation*}
Here $\Vol(K;u)$ is the volume function corresponding to the representation of $\pi_1(S^3\setminus{K})$ to $SL(2;\C)$ as in Theorem~$\ref{thm:MY}$.
\end{conjecture}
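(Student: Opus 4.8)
The plan is to run the heuristic of \S\ref{sec:VC_proof} rigorously in the one-parameter deformed setting $\theta := u + 2\pi\sqrt{-1}$, mirroring the two-step argument used to prove Theorem~\ref{thm:MY} for the figure-eight knot. First I would fix a braid presentation of $K$ and write $J_N\bigl(K;\exp(\theta/N)\bigr)$ as the state sum \eqref{eq:Jones_R}, one $R^{\pm1}$-entry per crossing summed over all admissible labelings of the arcs. Using the quantum-factorial asymptotics of \S\ref{sec:VC_proof}, now with $\xi_N^{k}$ replaced by $\exp\bigl(k\theta/N\bigr)$ times a shift coming from the framing and power corrections, each entry is approximated by $\exp\bigl[\frac{N}{\theta}(\text{sum of dilogarithms and }\log\text{ terms})\bigr]$, so that after absorbing powers of $\exp(\theta)$ and $N$ one obtains
\begin{equation*}
  J_N\bigl(K;\exp(\theta/N)\bigr)
  \underset{N\to\infty}{\approx}
  \sum_{i_1,\dots,i_c}
  \exp\left[\frac{N}{\theta}\,
  V\bigl(\exp(i_1\theta/N),\dots,\exp(i_c\theta/N);\exp(\theta)\bigr)\right],
\end{equation*}
where $V$ is the potential built from the octahedral decomposition of \S\ref{sec:VC_proof}, now depending on $u$ through $\exp(\theta)=\exp(u)$. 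One then \emph{defines} $H(K;u)$ to be $\theta$ times the value of $V$ at the geometric saddle point.

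Second, I would replace the state sum by a $c$-dimensional contour integral and apply the multidimensional steepest descent method (Theorem~\ref{thm:steepest_descent_method}). The critical-point equations $\partial V/\partial z_d = 0$ are, by the discussion in \S\ref{sec:VC_proof}, precisely Thurston's edge and cusp equations for the ideal triangulation underlying the octahedral decomposition; a solution $(x_1,\dots,x_c)$ therefore determines a (generically incomplete) hyperbolic structure on $S^3\setminus K$ whose meridian and longitude have $SL(2;\C)$-eigenvalues $\exp(u/2)$ and $\exp(v/2)$, with $v = 2\,d\,H(K;u)/d\,u - 2\pi\sqrt{-1}$. The limit is then $\theta\lim_{N\to\infty}\log J_N/N = H(K;u) = V(x_1,\dots,x_c)$, as in \eqref{eq:limit}.

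Third, I would extract the volume exactly as in \eqref{eq:Vol_H1}--\eqref{eq:Vol_H2}: using the formula \eqref{eq:tetra_vol} for the volume of each ideal tetrahedron $\Delta(z_d)$ together with the inversion and five-term identities for $\Li_2$, one rewrites $\sum_d \Vol(\Delta(z_d))$ as $\Im H(K;u) - \pi\Re u - \frac{1}{2}\Re u\,\Im v$; the term $\Re u\,\Im v$ is identified, via an $H_1$-computation and a length computation like \eqref{eq:length}, with the contribution of the core geodesic of the Dehn-filling solid torus, so that the right-hand side equals $\Vol(K;u)$, the volume of the deformed representation. If $S^3\setminus K$ has a nontrivial JSJ decomposition one treats each geometric piece separately, the Seifert pieces contributing nothing, consistently with the simplicial-volume normalization of \S\ref{sec:Volume_Conjecture}.

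The hard part is exactly what \S\ref{sec:VC_proof} calls cheating. One must (i) justify replacing the multi-sum by a multidimensional integral on an explicitly described contour, (ii) prove that this contour can be deformed so as to pass through the geometric saddle $(x_1,\dots,x_c)$, and (iii) control every other critical point --- these come from the whole $SL(2;\C)$-character variety of $\pi_1(S^3\setminus K)$, not just the geometric component --- together with the branch-cut and boundary contributions of the dilogarithms, so that the geometric saddle genuinely dominates. For the figure-eight knot this reduces to a single honest one-variable saddle-point estimate; for a general knot it is a high-dimensional problem requiring detailed knowledge of the relevant component of the $A$-polynomial-type gluing variety. A secondary, more combinatorial difficulty is to verify that for an arbitrary diagram the octahedral decomposition really yields a topological ideal triangulation of $S^3\setminus K$ and that the critical equations of $V$ coincide with Thurston's gluing equations in full generality, with the argument and branch bookkeeping precise enough to make \eqref{eq:Vol_H2} hold on the nose.
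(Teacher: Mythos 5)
The statement you are asked to prove is a \emph{conjecture}: the paper offers no proof of Conjecture~\ref{conj:PVC}, only the remark that it is known for the figure-eight knot and for torus knots by arguments in the cited references, together with the explicitly non-rigorous heuristic of \S\ref{sec:VC_proof} and the sketch of Theorem~\ref{thm:MY}. Your proposal reproduces that heuristic faithfully --- octahedral decomposition, potential function $V$, saddle-point analysis, identification of the critical equations with Thurston's gluing equations, and the volume bookkeeping of \eqref{eq:Vol_H1}--\eqref{eq:Vol_H2} --- but the items you defer to ``the hard part'' (justifying the sum-to-integral replacement, deforming the contour through the geometric saddle, and showing that this saddle dominates all other critical points coming from the full $SL(2;\C)$-character variety) are not technical details to be filled in later; they are the entire open content of the conjecture. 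A proof outline whose every nontrivial analytic step is flagged as unresolved is a restatement of the conjecture, not a proof, and in particular the existence of the limit in the first displayed equation --- which is itself part of the assertion --- is nowhere established by your argument.

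There is also a concrete mismatch between your strategy and the scope of the statement. The conjecture is for \emph{any} knot $K$, and the only family beyond the figure-eight knot for which it is verified consists of torus knots, which are not hyperbolic: for them there is no geometric saddle corresponding to a discrete faithful representation, the octahedral decomposition does not yield a hyperbolic ideal triangulation, and the known proof proceeds by direct asymptotic analysis of an explicit formula for $J_N$ of a torus knot rather than by steepest descent through a gluing variety. Your closing remark that Seifert pieces ``contribute nothing'' does not repair this, because the identity to be proved constrains $H(K;u)$ and $v$ globally, not merely the hyperbolic volume contribution. An honest route to partial progress would be either to prove existence of the limit for a single new knot, or to make the contour choice and saddle dominance rigorous for some restricted class of diagrams; every known case so far has required a knot-specific argument rather than the uniform scheme you describe.
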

\begin{remark}
In the case of a hyperbolic knot, we can also propose a similar conjecture with the imaginary part as in the case of the figure-eight knot.
For a general knot, a relation to the Chern--Simons invariant is also expected by using a combinatorial description of the Chern--Simons invariant by Zickert \cite{Zickert:DUKMJ2009}.
\end{remark}
\begin{remark}
Conjecture~\ref{conj:PVC} is known to be true for the figure-eight knot \cite{Murakami/Yokota:JREIA2007} and for torus knots \cite{Murakami:ADVAM22007}.
See also \cite{Murakami:ACTMV2008} and \cite{Hikami/Murakami:Bonn} for a possible relation to the Chern--Simons invariant.
\end{remark}
Finally note that Garoufalidis and L{\^e} proved the following result, which should be compared with Conjecture~\ref{conj:PVC}.
(See also \cite{Murakami:JPJGT2007} for the case of the figure-eight knot.)
\begin{theorem}[S.~Garoufalidis and T.~L{\^e} \cite{Garoufalidis/Le:aMMR}]
For any $K$, there exists $\varepsilon>0$ such that if $|\theta|<\varepsilon$, we have
\begin{equation*}
  \lim_{N\to\infty}
  J_N(K;\exp(\theta/N))
  =
  \frac{1}{\Delta(K;\exp{\theta})},
\end{equation*}
where $\Delta(K;t)$ is the Alexander polynomial of $K$.
\end{theorem}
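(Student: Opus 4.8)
The plan is to start from Habiro's cyclotomic expansion of the colored Jones polynomial and let $q\to1$ in a controlled way. Recall (see \cite{Habiro:SURIK2000,Masbaum:ALGGT12003}) that for every knot $K$ there are Laurent polynomials $C_{n}(K;q)\in\Z[q,q^{-1}]$ with
\begin{equation*}
  J_{N}(K;q)
  =
  \sum_{n=0}^{N-1}
  C_{n}(K;q)
  \prod_{j=1}^{n}
  \bigl(q^{(N-j)/2}-q^{-(N-j)/2}\bigr)
  \bigl(q^{(N+j)/2}-q^{-(N+j)/2}\bigr),
\end{equation*}
the sum being finite because the $j=N$ factor vanishes (for $K=E$ one has $C_{n}(E;q)\equiv1$, which recovers \eqref{eq:Jones_fig8}). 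First I would put $q:=\exp(\theta/N)$, so that $q^{N}=\exp\theta$ stays fixed while $q^{j/2}\to1$ for each fixed $j$; then each bracket tends to $\exp(\theta/2)-\exp(-\theta/2)$ and $C_{n}\bigl(K;\exp(\theta/N)\bigr)\to C_{n}(K;1)$, so, granting that one may interchange $\lim_{N\to\infty}$ with the sum,
\begin{equation*}
  \lim_{N\to\infty}J_{N}\bigl(K;\exp(\theta/N)\bigr)
  =
  \sum_{n=0}^{\infty}C_{n}(K;1)\,\bigl(\exp(\theta/2)-\exp(-\theta/2)\bigr)^{2n}.
\end{equation*}

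The second step is to identify this series with $1/\Delta(K;\exp\theta)$. Put $t:=\exp\theta$ and $z:=(t^{1/2}-t^{-1/2})^{2}=t+t^{-1}-2$. By the symmetry and the normalization $\Delta(K;1)=1$ of the Alexander polynomial, $1/\Delta(K;t)$ has an expansion $\sum_{n\ge0}d_{n}z^{n}$ with $d_{0}=1$, and the Melvin--Morton--Rozansky theorem says precisely that $d_{n}=C_{n}(K;1)$ for all $n$; comparing with the last display gives the stated limit. This also explains the shape of the statement: $z=t+t^{-1}-2$ is an analytic function of $\theta$, and the series $\sum d_{n}z^{n}$ converges only in a disk about $\theta=0$ whose radius is controlled by the zeros of $\Delta(K;\cdot)$, so $\varepsilon$ must depend on $K$. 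For $K=E$ everything is explicit: $\Delta(E;t)=3-t-t^{-1}=1-z$, hence $1/\Delta(E;t)=\sum_{n}z^{n}$, matching $C_{n}(E;1)=1$.

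The genuinely hard part is justifying the termwise passage to the limit, and both ingredients must be controlled uniformly in $N$. The product is harmless: from $|\sinh w|\le|w|e^{|w|}$ and $q=\exp(\theta/N)$ one checks that $\bigl|q^{(N\pm j)/2}-q^{-(N\pm j)/2}\bigr|=O(|\theta|)$ uniformly for $1\le j\le N-1$, whence
\begin{equation*}
  \Bigl|\,\prod_{j=1}^{n}\bigl(q^{(N-j)/2}-q^{-(N-j)/2}\bigr)\bigl(q^{(N+j)/2}-q^{-(N+j)/2}\bigr)\Bigr|
  \le
  (c|\theta|)^{n}
\end{equation*}
for some constant $c$. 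What is left --- and is the real content of the theorem --- is a geometric bound $\bigl|C_{n}\bigl(K;\exp(\theta/N)\bigr)\bigr|\le A\,B^{n}$, uniform in $N$, on the cyclotomic coefficients evaluated near $q=1$; combined with the display above this dominates the general term by $A(cB|\theta|)^{n}$, which is summable once $|\theta|$ is small, so dominated convergence applies and the two displays of the first paragraph together with the identification of the second finish the argument. I expect the uniform geometric bound on $C_{n}(K;\cdot)$ near $q=1$ --- in effect, a quantitative, convergent sharpening of Melvin--Morton--Rozansky --- to be the main obstacle: it cannot be read off from the values $C_{n}(K;1)$ alone, and one needs structural input such as the membership of $C_{n}(K;q)$ in Habiro's ring, or the $q$-holonomicity of $J_{N}(K;q)$, to control the dependence on $q$.
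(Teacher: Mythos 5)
The paper itself offers no proof of this statement---it is quoted from Garoufalidis and L\^e and simply cited---so there is nothing internal to compare against; I can only assess your argument on its own terms. Your skeleton is in fact the right one, and it is essentially the strategy of the cited paper: Habiro's cyclotomic expansion, termwise passage to the limit under $q=\exp(\theta/N)$ with $q^N$ held fixed, and identification of the resulting series $\sum_n C_n(K;1)\,z^n$ (with $z=t+t^{-1}-2$) with $1/\Delta(K;t)$, which converges for $|\theta|$ small because $\Delta$ is a polynomial in $z$ with constant term $\Delta(K;1)=1$. Your bound on the product factors is also fine (each factor is $2\sinh\bigl(\theta(N\pm j)/(2N)\bigr)=O(|\theta|)$ uniformly in $1\le j\le N-1$), and the reduction of the whole theorem to a single estimate is correctly carried out.

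But that single estimate---a bound $\bigl|C_n\bigl(K;\exp(\theta/N)\bigr)\bigr|\le A\,B^n$ uniform in $N$---is the analytic content of the theorem, and you do not supply it; you only name the kind of input (Habiro ring membership, $q$-holonomicity) that might yield it. This is a genuine gap, not a routine verification: Habiro's integrality gives $C_n(K;q)\in\Z[q^{\pm1}]$, but says nothing a priori about the growth in $n$ of its $\ell^1$-norm or its span of degrees, and evaluation at $q=\exp(\theta/N)$ with $|\theta/N|$ small does not tame a coefficient polynomial whose norm grows faster than exponentially in $n$. Garoufalidis and L\^e obtain the required exponential control from the state-sum/$R$-matrix presentation of $J_N$ (the number of admissible states and the size of the $R$-matrix entries grow at most exponentially in the relevant parameters), and without some such structural argument the dominated-convergence step does not go through. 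A secondary, smaller point: the identity $d_n=C_n(K;1)$ is not quite what the Melvin--Morton--Rozansky theorem ``says precisely''; MMR is a statement about the diagonal of the $(\hbar, N\hbar)$ expansion, and matching that diagonal with the $q\to1$ specialization of the cyclotomic coefficients requires the same kind of (formal, hence harmless) rearrangement you are performing analytically. So your proposal is a correct reduction and an accurate map of where the difficulty lies, but as written it does not prove the theorem.
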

\bibliography{mrabbrev,hitoshi}
\bibliographystyle{amsplain}
\end{document}